\documentclass[12pt,a4paper]{amsart}
\makeatletter
\renewcommand\normalsize{%
    \@setfontsize\normalsize{11.7}{14pt plus .3pt minus .3pt}%
    \abovedisplayskip 10\p@ \@plus4\p@ \@minus4\p@
    \abovedisplayshortskip 6\p@ \@plus2\p@
    \belowdisplayshortskip 6\p@ \@plus2\p@
    \belowdisplayskip \abovedisplayskip}
\renewcommand\small{%
    \@setfontsize\small{9.5}{12\p@ plus .2\p@ minus .2\p@}%
    \abovedisplayskip 8.5\p@ \@plus4\p@ \@minus1\p@
    \belowdisplayskip \abovedisplayskip
    \abovedisplayshortskip \abovedisplayskip
    \belowdisplayshortskip \abovedisplayskip}
\renewcommand\footnotesize{%
    \@setfontsize\footnotesize{8.5}{9.25\p@ plus .1pt minus .1pt}
    \abovedisplayskip 6\p@ \@plus4\p@ \@minus1\p@
    \belowdisplayskip \abovedisplayskip
    \abovedisplayshortskip \abovedisplayskip
    \belowdisplayshortskip \abovedisplayskip}
\setlength\parindent    {20\p@}
\setlength\textwidth    {412\p@}
\setlength\textheight   {570\p@}

\paperwidth=210mm
\paperheight=260mm
\ifdefined\pdfpagewidth
\setlength{\pdfpagewidth}{\paperwidth}
\setlength{\pdfpageheight}{\paperheight}
\else
\setlength{\pagewidth}{\paperwidth}
\setlength{\pageheight}{\paperheight}
\fi
\calclayout
\makeatother

\usepackage{amsmath,amssymb,amsthm,mathtools,wasysym,calc,verbatim,tikz,url,hyperref,mathrsfs,cite,fullpage,bbm}
\usepackage[noabbrev,capitalize,nameinlink]{cleveref}
\usepackage[shortlabels]{enumitem}

\usepackage{comment}


\newtheorem{theorem}{Theorem}[section]
\newtheorem{lemma}[theorem]{Lemma}

\newtheorem{prop}[theorem]{Proposition}
\newtheorem{observation}[theorem]{Observation}

\newtheorem{claim}[theorem]{Claim}
\newtheorem{fact}[theorem]{Fact}
\newtheorem{definition}[theorem]{Definition}

\theoremstyle{definition}

\theoremstyle{remark}
\newtheorem{remark}[theorem]{Remark}
\newtheorem*{remark*}{Remark}

\newcommand\N{\mathbb{N}}
\newcommand\R{\mathbb{R}}
\newcommand\Z{\mathbb{Z}}

\newcommand\cB{\mathcal{B}}

\newcommand\cP{\mathcal{P}}

\newcommand\cU{\mathcal{U}}

\def\Pr{\mathbb{P}}
\def\S{\mathcal{S}}

\newcommand\eps{\varepsilon}

\renewcommand{\leq}{\leqslant}
\renewcommand{\geq}{\geqslant}
\renewcommand{\le}{\leqslant}
\renewcommand{\ge}{\geqslant}
\renewcommand{\to}{\rightarrow}

\def\eps{\varepsilon}

	\def\C{\mathbb{C}}
		
		\def\cF{\mathcal{F}}
		
	\def\R{\mathbb{R}}
		
	\def\Z{\mathbb{Z}}
	\def\N{\mathbb{N}}
	\def\PP{\mathbb{P}}
	\def\1{\mathbbm{1}}
	\def\l{\lambda}
	\def\k{\kappa}
	
	\def\s{\sigma}

	\def\g{\gamma}

	\def\la{\langle}
	\def\ra{\rangle}

	\def\Ber{\mathrm{Ber}}

	\def\tr{\mathrm{tr}}
    \def\supp{\mathrm{Supp}}

        \def\Span{\mathrm{Span\,}}

	\def\cB{\mathcal{B}}

	\def\EE{\mathbb{E}}


\pagestyle{plain}

\newcommand{\snorm}[1]{\lVert#1\rVert}

\newcommand{\sang}[1]{\langle #1 \rangle}

\allowdisplaybreaks

\newcommand{\mb}{\mathbb}
\newcommand{\mbf}{\mathbf}
\newcommand{\mbm}{\mathbbm}
\newcommand{\mc}{\mathcal}

\newcommand{\mr}{\mathrm}

\newcommand{\ol}{\overline}
\newcommand{\on}{\operatorname}

\begin{document}
\addtolength{\footskip}{\baselineskip/2}

\title{The limiting spectral law for sparse iid matrices}

\author[A1]{Ashwin Sah}
\address{Massachusetts Institute of Technology. Department of Mathematics.}
\email{asah@mit.edu}

\author[A2]{Julian Sahasrabudhe}
\address{University of Cambridge. Department of Pure Mathematics and Mathematical Statistics.}
\email{jdrs2@cam.ac.uk}

\author[A3]{Mehtaab Sawhney}
\address{Columbia University. Department of Mathematics.}
\email{m.sawhney@columbia.edu}
\thanks{Sah was supported by the PD Soros Fellowship. Sah and Sawhney were supported by NSF Graduate Research Fellowship Program DGE-2141064. A portion of this work was conducted when Sawhney was visiting Cambridge with support from the Churchill Scholarship.}

\begin{abstract}
Let $A$ be an $n\times n$ matrix with iid entries where $A_{ij} \sim \Ber(p)$ is a Bernoulli random variable with parameter $p = d/n$ and $d$ constant.
We show that the empirical measure of the eigenvalues converges, in probability, to a deterministic distribution as $n \rightarrow \infty$. This essentially resolves a long line of work to determine the spectral laws of iid matrices and is the first known example for non-Hermitian random matrices at this level of sparsity.
\end{abstract}

\maketitle
\vspace{-2em}

\section{Introduction}\label{sec:introduction}
 For an $n\times n$ matrix $M$, define its \emph{spectral distribution} to be the probability measure on $\C$, which puts a point mass of equal weight on each eigenvalue $\l$ of $M$:
\[ \mu_M = n^{-1}\sum \delta_{\l}.\]
One of the central projects in random matrix theory, going back to the seminal 1958 work of Wigner \cite{Wig58}, is to determine the limiting spectral distribution of various random matrix models as the dimension tends to infinity.

While this area has enjoyed spectacular advances in the 80 years since its inception, several fundamental matrix models have eluded all attempts to understand their spectral law. Two major problems here concern very sparse matrices, in particular matrices with a constant number of non-zero entries in a typical row or column. The first is to show that $\mu_{M_n}$ tends to the oriented Kesten--McKay law as $n \rightarrow \infty$ when $M_n$ is an $n\times n$ matrix chosen uniformly at random from all matrices with exactly $d \in \N$ ones in each row and column (so called $d$-regular digraphs). The second is to show the existence of the limiting spectral distribution for iid Bernoulli random matrices with parameter $p = d/n$, for $d$ fixed.

In this paper we resolve this latter conjecture. As we will see,
this is the last piece in a complete understanding of the limiting spectral laws of iid random matrices and is the first time the existence of a limit law has been established for \emph{any} non-Hermitian random matrix model at this level of sparsity. In particular, this resolves a question highlighted by Tikhomirov in his 2022 ICM survey \cite[Problem~6]{Tik22}. 

\begin{theorem}\label{thm:main}
For $d > 0$, and each $n$, let $A_n$ be an $n\times n$ matrix with iid entries distributed as $\on{Ber}(d/n)$. There exists a distribution $\mu_d$ on $\C$ so that $\mu_{A_n}$ converges to $\mu_d$, in probability. 
\end{theorem}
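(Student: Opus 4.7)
The plan is to follow the Hermitization framework of Girko, adapted to the extreme sparsity regime $p = d/n$. Recall that any $n \times n$ matrix $M$ has empirical spectral distribution determined, via its logarithmic potential, by
\[
U_{\mu_M}(z) \;=\; -\tfrac{1}{n} \log \lvert\det(M - zI)\rvert \;=\; -\int_0^\infty \log(x) \, d\nu_{M,z}(x),
\]
where $\nu_{M,z}$ is the empirical distribution of the singular values of $M - zI$. To prove that $\mu_{A_n}$ converges in probability to a deterministic $\mu_d$, it therefore suffices to show that for Lebesgue-a.e.\ $z \in \C$, the random quantity $U_{\mu_{A_n}}(z)$ converges in probability to a deterministic function $U(z)$, from which one recovers $\mu_d = \frac{1}{2\pi}\Delta U$ in the distributional sense.

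For the weak convergence of $\nu_{A_n,z}$ the natural tool is the objective method. The weighted bipartite graph underlying $A_n - zI$ (with $-z$ along the diagonal entries and $\Ber(d/n)$ weights on the off-diagonal edges) converges in the local weak sense to an explicit random rooted tree built from a Poisson Galton--Watson branching process with mean $d$. Standard results on convergence of spectral measures along locally convergent bipartite graph sequences (of Bordenave--Lelarge--Salez type) then yield convergence in probability of $\nu_{A_n,z}$ to a deterministic measure $\nu_z$, together with convergence of Stieltjes transforms at every $\eta > 0$.

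The decisive difficulty is showing that $\log(x)$ is uniformly integrable with respect to $\nu_{A_n,z}$, so that the logarithmic moments actually pass to the limit. Control of the upper tail is routine via moment/truncation bounds on the operator norm of sparse iid matrices. Control near zero, however, is genuinely delicate: at sparsity $d/n$ the matrix $A_n - zI$ is singular with constant probability coming from sparse structures (empty or repeated rows/columns), and standard Rudelson--Vershynin invertibility estimates fail because near-null vectors can be essentially sparse. What is needed is \emph{both} a polynomial lower tail of the form $\Pr(\sigma_n(A_n - zI) \leq n^{-B}) = o(1)$ uniform on compact sets of $z$, and an intermediate estimate controlling the number of singular values below $\eps$, so that $\int_0^\eps |\log x|\, d\nu_{A_n,z}(x) \to 0$ with high probability as $\eps \to 0$ uniformly in $n$.

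The hard part is exactly this least-singular-value lower tail at sparsity $d/n$. The approach I would take is to split a putative near-null vector $v$ of $A_n - zI$ into a sparse part and a well-spread part: on the spread part one can run inverse Littlewood--Offord / small-ball arguments tailored to $\Ber(d/n)$ coordinates, while on the sparse part one must avoid a union bound over all sparse supports (which is fatal at this density) by instead exploiting the conditional independence structure of the associated rows, possibly via an inversion-of-randomness / compressibility dichotomy. Once the quantitative invertibility and the intermediate counting estimate are in hand, combining them with the local weak convergence from the second paragraph gives convergence in probability of $U_{\mu_{A_n}}(z)$ for a.e.\ $z$, and hence, by continuity of the distributional Laplacian, the existence of the claimed deterministic limit $\mu_d$.
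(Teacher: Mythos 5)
Your overall frame — Girko hermitization, weak convergence of the shifted singular value measures $\nu_{A_n,z}$ via local tree convergence or moment matching, and then uniform integrability of $\log$ near zero — is the right frame and matches the paper's Sections~\ref{sec:singular-convergence} and~\ref{sub:abstract-reduction}. The problem is that you identify the genuine bottleneck (controlling the lower tail of $\nu_{A_n,z}$ at constant $d$) and then propose to close it with exactly the machinery that is known \emph{not} to close it at this sparsity.

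Concretely, the compressible/incompressible (sparse/spread) split plus inverse Littlewood--Offord small-ball bounds on the spread part is the Rudelson--Tikhomirov \cite{RT19} program, which is engineered for $pn\to\infty$ and already requires very heavy machinery there. At $p=d/n$ the compressible regime genuinely has non-negligible probability mass (the matrix has many light rows and columns), and you acknowledge that a union bound over sparse supports is fatal, but ``exploiting the conditional independence structure'' and ``inversion-of-randomness / compressibility dichotomy'' is a label for the problem rather than an argument; no one has made this route work at constant $d$, and the ICM problem this paper resolves is precisely that this route had stalled. Moreover, the polynomial lower tail $\Pr(\sigma_n(A_n-zI)\le n^{-B})=o(1)$ you posit is not established by the paper and is flagged in Remark~\ref{rem:quant} as open; fortunately, it is also not needed — uniform integrability can be recovered from a much weaker estimate, namely a bound on the product of the bottom $\delta n$ singular values of the form $\exp(-O(\eps n))$, which is what the paper actually proves (Lemma~\ref{lem:crucial}).

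The paper's route to that estimate is structurally unlike yours and is the real content: it builds $A_n$ (after reordering so that high-degree vertices and a few $\on{Ber}(\sqrt{\log n}/n)$ rows sit at the end) by revealing one column then one row at a time, and tracks a sliding window of $\delta n$ consecutive singular values of the shifted submatrices. Proposition~\ref{prop:walk-row-modified} gives a deterministic update inequality for the product over the window when a row or column is appended, in terms of the projection $\|PX\|_2$ of the new row onto the bottom singular subspace; the anti-concentration needed to make $\|PX\|_2$ large with decent probability is against that (exposed, quasirandom) subspace, and is proved via unique-neighborhood expansion and flatness of near-kernel vectors — \emph{not} via inverse Littlewood--Offord over an $\eps$-net. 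A martingale/random-walk argument then shows the window index descends to $0$ with high probability, which yields the product bound. Separately, the paper handles $d\le 1$ by a purely graph-theoretic argument (the limit is $\delta_0$), which your proposal does not address. So the gap is not a detail you could patch within your outline; it is that your outline reaches for the wrong key lemma, and the paper's key lemma (the window update and its anti-concentration input) is a genuinely different idea.
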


Our proof differs significantly from previous approaches, such as \cite{RT19}; for example, we entirely avoid the use of $\eps$-nets. Rather, our approach is to ``build up'' the matrix, a row and column at a time,
and study the evolution of the point process defined by the singular values of the shifted matrices $A_n-zI$ as we add rows and columns. Our methods additionally give a considerably shorter proof of the difficult and celebrated theorem of Rudelson and Tikhomirov \cite{RT19} who proved the existence of the limiting spectral law in the case $pn \rightarrow \infty$. The details of this are contained in the sister paper \cite{SSS23}. 

We remark that the real problem here is for $d > 1$. In the ``subcritical'' and ``critical'' regimes, $d < 1$ and $d=1$, it is not hard to show that almost all of the eigenvalues of $A_n$ are $0$ and therefore $\mu_d = \delta_0$ (see Section~\ref{sec:graph-argument}). On the other hand, when $d > 1$, we expect that $\mu_d$ is a rich, non-trivial distribution. Thus $p = 1/n$ represents the threshold for the ``birth'' of the spectrum of $A_n$.

While in this paper we focused on the \emph{existence}
of the limiting distribution $\mu_d$, many properties of $\mu_d$ can be deduced, using other methods, now that it has been shown to exist. First, it is immediate from the proof that the limiting measure $\mu_d$ is the Brown's spectral measure of the adjacency operator of a directed Poisson Galton-Watson tree (see \cite{Sni02,BCC14} for further discussion regarding Brown measures). Furthermore one can deduce that $\mu_d$ is rotationally invariant, which incidentally falls out of some calculations we need in the course of proving  Theorem~\ref{thm:main} (see Lemma~\ref{lem:converge-singular}). The limiting distribution $\mu_d$ however is not expected to admit an simple ``algebraic'' form like the circular law. 

Let us also remark that our proof also can be adapted to the case where all non-zero entries are iid copies of a random variable $\xi$ with variance $1$ and with moments that decay sufficiently quickly. However, to keep this paper as streamlined as possible, we have elected not to work in this level of generality. 

In Theorem~\ref{thm:main}, and throughout this paper, we are concerned with convergence \emph{in probability}: 
a sequence of random measures $\mu_n$ converges \emph{in probability} to a probability measure $\mu$, if for all continuous bounded functions $f\colon\C \rightarrow \C$, and all $\eps >0$, we have 
\begin{equation}\label{def:measure-converge} 
\bigg|\int f~d\mu_n-\int f~d\mu \hspace{1mm} \bigg| < \eps
\end{equation}
with probability $1-o(1)$. If this holds we write $\mu_n\rightsquigarrow\mu$. It is also natural to consider the stronger notion of \emph{almost sure convergence} of $\mu_{A_n}$ to $\mu_d$, which is a problem we leave open for future work.

\subsection{The least singular value problem}

Before discussing the history of the limiting spectral laws for iid random matrices, we highlight a consequence of our results that is of independent interest and essentially resolves another question raised by Tikhomirov in his ICM survey \cite[Problem~7]{Tik22}. 

Here we are interested in proving that the spectrum of $A_n$ does not ``clump'' about a point  $z \in \C$. This ``clumping'' is captured in the extreme behaviour of the least singular value of the random \emph{shifted} matrices $A_n-zI$. Recall that for an $n\times n$ matrix $M$, its least singular value is 
\[ \sigma_n(M) =  \min_{v \in \S^{n-1} } \|M v\|_2, \]
where $\S^{n-1}$ denotes the unit sphere in $\R^d$. In this paper we prove the following ``qualitative'' estimates on $\sigma_n(A_n-zI)$, conjectured by Tikhomirov \cite{Tik22}.

\begin{theorem}\label{thm:shifted-main} 
Fix $d > 1$ and $\eps > 0$. Then for Lebesgue almost all $z$ we have the following. For each $n$, let $A_n$ be an $n\times n$ matrix with iid entries distributed as $\on{Ber}(d/n)$.  Then
\[\mb{P}\bigg(\sigma_n(A_n-zI)\le\exp(-\eps n)\bigg) = o(1).\]
\end{theorem}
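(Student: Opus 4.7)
The plan is to deduce Theorem~\ref{thm:shifted-main} from the Hermitization-style convergence already produced by the proof of Theorem~\ref{thm:main}. Concretely, that proof supplies, for Lebesgue almost every $z\in\C$, two ingredients: \textbf{(I)} weak convergence in probability of the empirical singular value measure $\nu_{n,z}$ of $A_n-zI$ to a deterministic measure $\nu_z$ (this is Lemma~\ref{lem:converge-singular}), with $\log x$ integrable against $\nu_z$ near $0$; and \textbf{(II)} the log-determinant convergence
\[ \tfrac{1}{n}\log|\det(A_n-zI)|\ \xrightarrow{\mathbb{P}}\ U_z\ :=\ \int \log x\, d\nu_z(x), \]
which is exactly what makes Girko's formula actually recover $\mu_d$. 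A standard operator norm bound $\|A_n-zI\|_{\mathrm{op}}\le K=K(z,d)$ with probability $1-o(1)$ rounds out the toolkit.

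Granting these, fix $z$, $\eps>0$, and introduce $L_\delta(x):=\max(\log x,\log\delta)$, which, after capping above by $\log K$, is bounded and continuous on $[0,\infty)$ and agrees with the naive definition on $[0,K]$. Testing (I) against $L_\delta$ gives $\tfrac{1}{n}\sum_i L_\delta(\sigma_i(A_n-zI))\to \int L_\delta\,d\nu_z$ in probability, and subtracting (II) yields
\[ \frac{1}{n}\sum_{i=1}^n\bigl[L_\delta(\sigma_i(A_n-zI))-\log\sigma_i(A_n-zI)\bigr]\ \xrightarrow{\mathbb{P}}\ \int L_\delta\,d\nu_z - U_z. \]
By monotone convergence the right-hand side tends to $0$ as $\delta\to 0$, so fix $\delta=\delta(\eps,z)>0$ (independent of $n$) making this limit at most $\eps/2$. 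Each summand on the left is nonnegative, and the $i=n$ term equals $\log(\delta/\sigma_n(A_n-zI))\cdot\mathbf{1}\{\sigma_n(A_n-zI)<\delta\}$, so with probability $1-o(1)$,
\[ \log\bigl(\delta/\sigma_n(A_n-zI)\bigr)\cdot\mathbf{1}\{\sigma_n(A_n-zI)<\delta\}\ \le\ \eps n/2+o(n). \]
Analyzing the events $\{\sigma_n\ge\delta\}$ and $\{\sigma_n<\delta\}$ separately, this rearranges to $\sigma_n(A_n-zI)\ge e^{-\eps n}$ for all sufficiently large $n$ (on the first event, trivially; on the second, $\sigma_n\ge\delta e^{-\eps n/2-o(n)}\ge e^{-\eps n}$ for $n$ large, since $\log\delta$ is a fixed constant).

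The real obstacle here is ingredient (II). Weak convergence of the singular value measures on its own is too soft to exclude exponential smallness of $\sigma_n(A_n-zI)$ at any rate; it is precisely the genuine log-integrability of $\log x$ against the sequence $\nu_{n,z}$---the non-trivial Hermitization step inside the proof of Theorem~\ref{thm:main}---that supplies the needed quantitative lower bound. Everything else in the deduction above is routine book-keeping with truncations and monotone convergence. In effect, Theorem~\ref{thm:shifted-main} is just the qualitative shadow of the log-integrability that already drives Theorem~\ref{thm:main}.
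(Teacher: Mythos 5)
You take a softer route than the paper hints at. Granting (I) the weak convergence $\nu_{n,z}\rightsquigarrow\nu_z$ (Lemma~\ref{lem:converge-singular}) and (II) the in-probability convergence $\tfrac1n\log|\det(A_n-zI)|\to U_z$, you deduce the bound by a truncation argument. The paper instead gets Theorem~\ref{thm:shifted-main} directly from Lemma~\ref{lem:crucial}: since $\sigma_{n-j}(B-zI)\le\sigma_{n/2}(B-zI)=O_{d,z}(1)$ for all $j\le\delta n$ (Hilbert--Schmidt bound), the event $\sigma_n(B-zI)<e^{-\eps' n}$ forces $\prod_{j=0}^{\delta n}\sigma_{n-j}(B-zI)<e^{-\eps' n}\,O_{d,z}(1)^{\delta n}$, contradicting Lemma~\ref{lem:crucial} once $\Delta$ is chosen large (so that the parameters $\eps,\delta$ of Section~\ref{sec:setup} are small enough that $C\eps+\delta\log O_{d,z}(1)<\eps'$). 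Note also that your ingredient (II) is never stated as a lemma in the paper; it is itself a consequence of Lemma~\ref{lem:crucial} together with Lemma~\ref{lem:converge-singular}, extracted from the verification of the Bordenave--Chafa\"i uniform-integrability condition in Section~\ref{sec:proof}, so your path is valid in outline but is the longer way around.

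There is, however, a genuine gap in the soft step. The asserted operator norm bound $\snorm{A_n-zI}_{\mr{op}}\le K(z,d)$ with probability $1-o(1)$ is false at this sparsity level: for $A_n$ with $\on{Ber}(d/n)$ entries the maximum row/column sum is $\Theta(\log n/\log\log n)$ whp, hence $\sigma_1(A_n-zI)\ge(\log n/\log\log n)^{1/2}-|z|\to\infty$. This invalidates the claim ``each summand on the left is nonnegative'': for indices $i$ with $\sigma_i>K$ one has $L_\delta(\sigma_i)-\log\sigma_i=\log K-\log\sigma_i<0$, so you cannot isolate the $i=n$ term by dropping the others. The repair is standard and is exactly what the paper does for the upper tail in Section~\ref{sec:proof}: bound the negative contribution by $\tfrac1n\sum_{\sigma_i>K}(\log\sigma_i-\log K)\le\snorm{A_n-zI}_{\mr{HS}}^2/(nK)=O_{d,z}(1/K)$ whp, then take $K$ large in terms of $\eps$ before passing to the $i=n$ term. (A minor additional imprecision: with the cap at $\log K$ the limit of $\int L_\delta\,d\nu_z-U_z$ as $\delta\to0$ is $-\int(\log x-\log K)_+\,d\nu_z\le0$, not $0$; this does not affect the logic since you only need an upper bound by $\eps/2$, but ``monotone convergence to $0$'' is not quite right.)
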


In fact, a careful analysis of our proof reveals that we may take $\eps = n^{-1/2 + o(1)}$ and the $o(1)$ probability bound can be taken to be $(\log n)^{-\Omega(1)}$. However, these quantitative aspects are not the focus of this work and thus our methods are not tailored to this problem. We briefly discuss these dependencies in Remark~\ref{rem:quant}.

We also note that our proof can be modified to handle $d<1$ in the setting of Theorem~\ref{thm:shifted-main}, but we do not pursue the study of this sub-critical regime here, in the interest of brevity.

\subsection{History of the limiting spectral law for iid random matrices}

The project of determining the limit laws for random matrix models goes back to the seminal work of Wigner who proved the famous ``semi-circular'' law for random \emph{symmetric} matrices (or Wigner matrices). We let $M_n$ be an $n\times n$ random symmetric matrix with entries $(M_n)_{i\leq j}$ independent and uniform in $\{0,1\}$. Since symmetric matrices have all real eigenvalues it then makes sense to define, for $ a < b $, $N_{n}(a,b)$ to be the number of eigenvalues of $M_n$ in the interval $(a,b)$. Wigner's semi-circular law says     
\[ \lim_{n \rightarrow \infty} \frac{N_n(a\sqrt{n},b\sqrt{n})}{n} = \frac{1}{2\pi}\int_{a}^b(4 - x^2)^{1/2}_+\,dx \]
almost surely.

While these methods led to a very good understanding of symmetric and Hermitian random matrix models, determining the limiting spectral distribution for matrices with \emph{iid} (non-symmetric) entries 
proved to be substantially more difficult.  Here the first steps were
taken by Mehta \cite{Meh67}, who in the 1960s showed that when $A_n$ has iid complex Gaussian entries, the spectral distribution of $n^{-1/2}A_n$ converges to the uniform measure on the unit disc $\{ z \in \C \colon |z| \leq 1 \}$, the so-called \emph{circular law}. Mehta's proof relied deeply on the symmetries of complex Gaussian random variables and it was not until the 1990s that Edelman \cite{Ede97} managed to prove the  same result for \emph{real} Gaussian random variables. 

The case of more general coefficient distributions was studied in the 1980s by Girko \cite{Gir84}, who developed very influential ideas such as the ``hermitization'' technique, but his method relied on an unproven statement about the least singular value of iid matrices. This statement was then circumvented in the 1990s by Bai \cite{Bai97}, who extended the theorem of Edelman to matrices where the entries are iid mean $0$, variance $1$, and satisfy some smoothness and moment conditions. These results were then improved by G\"{o}tze and Tikhomirov \cite{GT10} and Tao and Vu \cite{TV08}, by using the method of Bai along with methods of Rudelson and Vershynin \cite{RV08} and Tao and Vu \cite{TV09} to control the least singular value. Finally, Tao and Vu \cite{TV10} proved the full ``universality'' theorem, showing the circular law holds for \emph{any} sequence of iid matrices with entries distributed as a mean $0$ variance $1$ random variable. Their method, relying on their breakthroughs in inverse Littlewood--Offord theory, provides a full understanding of empirical spectral distributions of ``dense'' random matrices.

\begin{theorem}[Tao and Vu] \label{thm:tao-vu-circular-law}
Let $\xi$ be a complex random variable with mean $0$ and variance $1$, let $A_n$ be a sequence of random matrices with iid entries distributed as $\xi$.
If we put $A^{\ast}_n = A_n \cdot n^{-1/2}$ then the spectral measure 
$\nu_{A_n^{\ast}}$ converges to the circular law almost surely\footnote{This notion of ``almost sure'' convergence is a stronger notion that implies convergence \emph{in probability}.}.
\end{theorem}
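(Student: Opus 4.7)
The plan is to follow Girko's Hermitization approach, which reduces the non-Hermitian spectral problem to a one-parameter family of Hermitian problems. Set $M_z := A_n^{\ast}-zI$. For any smooth, compactly supported test function $\varphi\colon \C\to\C$, Green's theorem gives
\[\int \varphi \, d\nu_{A_n^{\ast}} \;=\; \frac{1}{2\pi n}\int_{\C}\Delta \varphi(z)\,\log|\det M_z|\,dm(z),\]
where $dm$ denotes Lebesgue measure on $\C$. The convergence of $\nu_{A_n^{\ast}}$ therefore reduces to the pointwise-a.e.\ convergence of $f_n(z):=n^{-1}\log|\det M_z|=n^{-1}\sum_{i}\log\sigma_i(M_z)$ together with a suitable uniform integrability statement allowing one to pass to the limit under the integral sign.

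First, I would prove that the empirical spectral measure of $M_zM_z^{\ast}$ converges to a deterministic limit $\nu_z$ on $\R_{\geq 0}$. This is done by the Stieltjes transform method: one writes $s_n(\eta)=n^{-1}\tr((M_zM_z^{\ast}-\eta)^{-1})$, applies Schur complement formulas row-by-row, and shows that $s_n(\eta)$ concentrates near the solution of a self-consistent equation whose unique solution in the upper half-plane is a deterministic function $s(\eta;z)$. This step uses only the mean-zero, variance-one assumption and standard concentration; crucially, it controls the \emph{bulk} of the singular values, but is blind to small singular values.

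The main obstacle is the possible contribution of small singular values of $M_z$ to $n^{-1}\sum_i \log \sigma_i(M_z)$, since $\log$ is unbounded at $0$. Two estimates are needed: (i) a polynomial lower bound $\sigma_n(M_z)\ge n^{-B}$ with high probability for a.e.\ $z$, and (ii) an upper bound on the number of singular values of $M_z$ lying below a given polynomial threshold. For (i), one combines the compressibility/incompressibility decomposition of Rudelson and Vershynin with the inverse Littlewood--Offord theorems of Tao and Vu: the latter say that the small-ball probability $\sup_{v}\Pr(|\sum_i\xi_i a_i - v|\le \delta)$ can only be polynomially large if the vector $a$ possesses strong additive structure; an $\eps$-net over such structured vectors, combined with a union bound, rules out a sub-polynomial least singular value. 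Estimate (ii) follows from the convergence of the bulk measure together with Tao--Vu-type bounds on intermediate singular values (``negative moment'' estimates).

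Once (i) and (ii) are in place, one obtains uniform integrability of $\log$ against the singular value empirical measures of $M_z$ for a.e.\ $z\in\C$, and hence $f_n(z)\to \int_0^{\infty}\log x\,d\nu_z(x)=:f(z)$ in probability. Dominated convergence applied to Girko's formula then gives $\int \varphi\,d\nu_{A_n^{\ast}}\to \frac{1}{2\pi}\int\Delta\varphi(z)\,f(z)\,dm(z)$, and a direct computation identifies the latter as the integral of $\varphi$ against the uniform measure on the unit disc. The hardest step is without question (i): the least singular value bound for \emph{general} coefficient distributions, which forced the development of inverse Littlewood--Offord theory, as the method of Rudelson--Vershynin suffices only when the entries enjoy some smoothness or anti-concentration beyond mere variance.
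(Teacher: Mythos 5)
This theorem is not proved in the paper at all --- it is stated as a cited background result, attributed to Tao and Vu \cite{TV10}, and the paper's actual contribution concerns the far sparser regime $p=d/n$. So there is no internal argument of the paper to compare your proposal against. Your sketch is, however, a faithful high-level account of how the cited result is actually established: Girko's Hermitization with Green's theorem reduces to the pointwise a.e.\ convergence of the log-potential $n^{-1}\log|\det(A_n^\ast-zI)|$; Stieltjes-transform / self-consistent-equation arguments give convergence of the bulk of the singular value measure of $A_n^\ast-zI$; a polynomial lower bound on the least singular value follows from the Rudelson--Vershynin compressible/incompressible decomposition upgraded by Tao--Vu's inverse Littlewood--Offord theory; and uniform integrability of $\log$ is completed by intermediate singular value bounds. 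One caveat worth flagging: you frame estimate (ii) as if it ``follows from the convergence of the bulk measure'', but in the Tao--Vu argument the intermediate singular value bound $\sigma_{n-k}(A_n^\ast-zI)\gtrsim k/n$ (for $1\le k\le\delta n$) requires its own interlacing/distance-to-subspace argument and is logically separate from the Stieltjes-transform bulk convergence; bulk convergence alone does not control how many singular values can collapse into an interval like $[0,n^{-1}]$. The overall architecture is correct, and, as the footnote in the statement indicates, Tao--Vu in fact obtain almost-sure rather than merely in-probability convergence, which requires stronger (polynomial) failure probability bounds in steps (i) and (ii) together with a Borel--Cantelli argument beyond what your outline describes.
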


While this celebrated line of results gives us a very good understanding of the limiting spectral laws of dense matrices, it does not tell us anything about matrices where the non-zero entries are \emph{sparse}, as is often interesting in combinatorial settings. Of particular interest are Bernoulli random matrices: random iid matrices where all entries are Bernoulli random variables that take value $1$ with probability $p = p_n \rightarrow 0$ and $0$ otherwise. 

The spectral laws of such matrices were considered by G\"{o}tze and Tikhomirov \cite{GT10}, who proved that the limiting spectral law of $A_n$ is still the circular law (with appropriate normalization) for all $p > n^{-1/4+\eps}$. Tao and Vu \cite{TV08}  improved this range to $p > n^{-1+\eps}$, and Basak and Rudelson \cite{BR19} improved this range further to account for all $p > \omega(n^{-1}(\log n)^2)$.

Then, in an important and difficult paper, Rudelson and Tikhomirov \cite{RT19} extended these results to account for all $pn \rightarrow \infty$. This work is of particular interest since it is not hard to see that that the condition $pn\rightarrow \infty$ is \emph{necessary} for convergence to the circular law: for $pn$ bounded there is always an atom at zero.

\begin{theorem}[Rudelson and Tikhomirov]  Let $p = p_n$ be such that $pn \rightarrow \infty$ and $p \to 0$. For each $n$, let $A_n$ be an $n\times n$ matrix with iid entries distributed as $\on{Ber}(p)$. If we put $A^{\ast}_n = (pn)^{-1/2}A_n$ then $\mu_{A_n^{\ast}}$ tends to the circular law, in probability. 
\end{theorem}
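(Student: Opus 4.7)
The plan is to follow Girko's Hermitization scheme: for any smooth compactly supported test $f \colon \mathbb{C} \to \mathbb{C}$,
\[
\int f\, d\mu_{A_n^\ast} \;=\; \frac{1}{2\pi n}\int \Delta f(z) \sum_{k=1}^n \log \sigma_k(A_n^\ast - zI)\, dz,
\]
so it suffices to show that for Lebesgue-almost every $z \in \mathbb{C}$ the random variable $n^{-1}\sum_k \log \sigma_k(A_n^\ast - zI)$ converges in probability to the log-potential $U(z) := \int \log|z-w|\, d\mu_{\mathrm{circ}}(w)$ of the uniform measure on the unit disc. This task cleanly splits into (i) weak convergence of the empirical singular-value measure $\nu_n^z := n^{-1}\sum_k \delta_{\sigma_k(A_n^\ast - zI)}$ to a deterministic limit $\nu^z$, and (ii) uniform integrability of $\log(\cdot)$ against $\{\nu_n^z\}$, which reduces to ruling out accumulation of small singular values.

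For (i), I would follow the row-and-column build-up philosophy of this paper, tracking the Stieltjes transform $m_n^z(\eta) = \frac{1}{n}\mathrm{tr}(H_n(z) - \eta)^{-1}$ of the Hermitization $H_n(z) = \begin{pmatrix} 0 & A_n^\ast - zI \\ (A_n^\ast - zI)^\ast & 0 \end{pmatrix}$ as successive rows and columns are appended. A Schur-complement expansion expresses each increment of $m_n^z$ as a quadratic form in the freshly added row; the hypothesis $np \to \infty$ is exactly what ensures, after the $(np)^{-1/2}$ normalization (so that entries of $A_n^\ast$ have variance $1/n$), that these quadratic forms concentrate tightly around their expectations. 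A standard Azuma-type martingale concentration then pins down a deterministic limit $m^z$ satisfying the same fixed-point equation as in the Gaussian iid case, identifying $\nu^z$ with the limit measure for the shifted Ginibre ensemble. This half is the ``soft'' part and is essentially second-moment analysis.

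The substantive obstacle is (ii): one needs, with probability $1 - o(1)$, a polynomial lower bound $\sigma_n(A_n^\ast - zI) \geq n^{-C}$ for a.e.\ $z$, together with an intermediate bound of the form $\sigma_{n-k}(A_n^\ast - zI) \gtrsim k/n$ for $k$ in a range like $(\log n)^C \leq k \leq cn$, so that no mass of $\nu_n^z$ accumulates near the origin fast enough to spoil $\int \log x\, d\nu_n^z(x)$. The intermediate bound follows from the Tao--Vu negative second moment identity, controlling $\sigma_i^{-2}$ by sums of reciprocal squared distances from rows of $A_n^\ast - zI$ to the spans of the others, and then applying a least-singular-value bound inductively to smaller shifted sparse submatrices. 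The hard input is the least-singular-value estimate for $A_n^\ast - zI$ itself: even in the dense Tao--Vu regime this was the main breakthrough, and at sparsity $np \to \infty$ one must rule out structured sparse vectors in the approximate kernel of a shifted sparse iid matrix.

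The hard part is unambiguously producing this least-singular-value bound. The key contribution of the present methods, on which the sister paper relies, is that the row-by-row construction of $A_n$ bypasses the classical $\varepsilon$-net/inverse Littlewood--Offord machinery entirely and reduces anticoncentration to relatively soft recursive statements about the distribution of a newly added sparse row projected onto the approximate kernel of the previous step. Once this least-singular-value bound and the intermediate-singular-value tail are in place, the Hermitization integral converges, and the limit is directly identified with $U(z)$ by the Stieltjes-transform calculation of step (i), completing the proof.
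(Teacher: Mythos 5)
Your high-level frame is correct and matches the paper: Girko's hermitization reduces the circular law to pointwise (a.e.\ in $z$) control of $n^{-1}\sum_k\log\sigma_k(A_n^\ast-zI)$, and this splits into bulk convergence of $\nu_n^z$ plus uniform integrability of $\log$ near $0$. You also correctly identify the key innovation of this line of work: replacing $\eps$-net/inverse Littlewood--Offord anticoncentration with a row-and-column build-up, so that the anticoncentration input is a soft statement about the projection of a freshly exposed row onto a near-kernel subspace.

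However, the route you propose for controlling small singular values is genuinely different from, and arguably inconsistent with, the one actually used in this paper and its companion \cite{SSS23}. You propose the ``traditional'' two-stage decomposition: (a) a least-singular-value bound $\sigma_n(A_n^\ast-zI)\geq n^{-C}$, plus (b) intermediate bounds $\sigma_{n-k}\gtrsim k/n$ for $k\in[(\log n)^C,cn]$ obtained via the Tao--Vu negative second moment identity applied to shifted submatrices. This is explicitly \emph{not} what the paper does; Section~\ref{sec:outline} contrasts this traditional approach with its own. Instead of individually bounding $\sigma_n$ and each $\sigma_{n-k}$, the paper controls a single aggregated quantity: the log-sum $W_{t,r}(z)=-\tfrac1n\sum_{j=r}^{r+\delta n}\log\sigma_{t-j}(A_t-zI_t)$ over a sliding \emph{window} of $\delta n$ singular values. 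The recursion is driven by the product inequality of Proposition~\ref{prop:walk-row-modified}, which propagates control of the window through one row/column addition at a cost governed by $\|P X\|_2$, and a random-walk drift argument pushes the window index $r$ down to $0$. This collapses the two stages (a) and (b) into one, and in particular never needs an intermediate bound of the form $\sigma_{n-k}\gtrsim k/n$.

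The distinction is not cosmetic. The Tao--Vu negative second moment identity, as you note, requires a least-singular-value bound for shifted \emph{submatrices}, applied inductively in $k$. At sparsity $pn\to\infty$ slowly, submatrices are themselves sparse shifted iid matrices, and producing those bounds by the classical route is precisely the hard technical content of \cite{RT19}. If you are avoiding $\eps$-nets for the full matrix via the row-by-row construction, you cannot simply re-import a machinery that would need $\eps$-net-quality bounds on every submatrix; you would need to run the row-by-row argument at every intermediate stage, which is, upon unraveling, exactly the window-tracking scheme. So either you recreate the paper's scheme in disguise, or you leave a gap at the intermediate singular values. Additionally, the bulk convergence in \cite{SSS23} and here is done by the method of moments (Lemma~\ref{lem:moment-convergence}) rather than Stieltjes-transform self-consistency; your Stieltjes-transform plan is a reasonable alternative for that ``soft'' half, but the place you should rethink is the treatment of the small singular values, where you should replace the two-stage $\sigma_n$-plus-Tao--Vu argument with the single window-log-sum recursion.
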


In fact, they prove a more general result that allows for each non-zero entry to be a copy of an iid random variable $\xi$ with mean $0$ and variance $1$. We point the reader to \cite{RT19} or our sister paper \cite{SSS23}, where we give a simple proof of a variant of this theorem that subsumes all of these previous results. 

This leaves open what has proven to be the most difficult and subtle case, the case of $p = d/n$ for constant $d >0$. In this paper we complete this program by establishing the existence of the limiting spectral law for all $d$.

\section{Outline of Proof}\label{sec:outline}

In the companion paper \cite{SSS23} we adapt the methods of this paper to give a different and significantly shorter proof of the sparse circular law of Rudelson and Tikhomirov \cite{RT19}. In the setting of that paper we are able to avoid several significant difficulties that arise here and thus one may find it easier to absorb our method by first understanding our paper \cite{SSS23}. Of course, we will not assume any knowledge of \cite{SSS23} in our treatment here. 

In what follows we outline the proof of our main theorem. We start by discussing and motivating the general approach to this problem, via the logarithmic potential, which is now a well established ``first step'' in proving limiting laws. We then sketch how one is naturally led to consider the small singular values of the shifted matrices $A_n-zI_n$. We then go on to explain, in quite a bit of detail,  our novel ``dynamic'' method to studying these small singular values.  

For this, we first outline our proof in the case $p = d/n$, where $d$ is large but fixed. We then go on describe how we can adjust the proof so that it works for all fixed $d>1$. The subcritical case $d\leq 1$ is actually \emph{much} easier and really should be thought of as a different result. Indeed, we handle this case with a completely different and more direct proof in Section~\ref{sec:graph-argument}.  Throughout our discussion here, we give pointers to the relevant places in the text so the reader can find the precise statements and proofs.

\subsection{Convergence of the logarithmic potential}\label{sec:sketch-girko-step}
We start by discussing the standard method for proving the existence of a limit law, by way of the logarithmic potential and singular value estimates. Along the way, we will introduce a few of the central objects of study that we will use throughout the paper. Formally, much of what we discuss here, in Section~\ref{sec:sketch-girko-step}, is packaged as Proposition~\ref{prop:unicity}, which we borrow from the work of Bordenave and Chafa\"{\i} \cite{BC12}.

To establish the convergence of the spectral law, it is enough to prove the point-wise almost everywhere convergence of
the logarithmic potential of the spectral law, which is the (random) function
\begin{equation}\label{eq:sketch-Un} U_n(z) = - \frac{1}{n}\sum_{\l} \log|\l - z| \, ,  \end{equation}
where the sum is over the eigenvalues $\l$ of our random matrix $A_n$.  In fact, we already arrive at a challenge that separates this problem from case when $pn \rightarrow \infty$. When proving that the limiting law is \emph{circular}, one has the very nice target potential $U^{\circ}(z)$ (i.e. the logarithmic potential of the circular law) to prove convergence to. In our case, \emph{a priori}, we don't know the form for our limiting object or if it has nice properties. 

Now, to work with \eqref{eq:sketch-Un}, we apply Girko's ``hermitization'' method (see e.g.~\cite{BC12}) to express the logarithmic potential in terms of the singular values of the shifted random matrices $A_n - zI_n$
\begin{equation} \label{eq:girko}
U_{n}(z) =  -\frac{1}{n} \sum_{j = 1}^{n} \log\big(\sigma_j\big(A_n-zI_n\big)\big).
\end{equation} 
Here we are using the notation 
$\sigma_1(M) \geq \cdots \geq \s_m(M)$ to denote the (right) singular values of the $n \times m$ matrix $M$.

The big advantage of the expression \eqref{eq:girko} is that it is in terms of  \emph{singular values}, rather than eigenvalues, which have the advantage that they are the eigenvalues of the Hermitian matrix $(A_n-zI_n)^\dagger(A_n-zI_n)$, and are therefore real numbers. Thus, if we define the measure
\begin{equation}\label{eq:def-nu} \nu_{z,n} = n^{-1} \sum \delta_{\sigma}, \end{equation}
where the sum is over the singular values $\s \in \{ \s_n(A_n-zI_n),\ldots,\s_1(A_n-zI_n) \}$, we can recover the bulk behavior of $\nu_{z,n}$
by simply computing the trace moments of $(A_n-zI_n)^\dagger(A_n-zI_n)$ and using the identity
\begin{equation}\label{eq:sketch-trace-moments} n^{-1}\EE\, \big((A_n-zI_n)^\dagger(A_n-zI_n)\big)^k = n^{-1} \sum_{j=1}^n (\sigma_j(A_n-zI_n))^k = \EE_{\sigma \sim \nu_{z,n}} \, \sigma^k, \end{equation}
where the last expectation is over a random sample of $\sigma \sim \nu_{z,n}$. Thus, via standard methods, we can conclude that, for each $z$, there exists a deterministic measure $\nu_z$ for which 
\[ \nu_{z,n} \rightsquigarrow \nu_z, \] in probability, as $n\rightarrow \infty$. This means that almost all of the terms in the sum \eqref{eq:girko} converge to a well defined limit. We point the reader to Section~\ref{sec:singular-convergence} for precise statements of these results.

While this is a good (and far from novel) first step, this \emph{does not} imply the convergence of the log potential $U_n(z)$ since 
it is (\emph{a priori}) possible for even a single term in the sum \eqref{eq:girko} to disrupt the convergence of $U_n(z)$. There are two ways this can happen. The first way is if the largest term $\sigma_1(A_n-zI_n)$ blows up. But luckily there is little worry of this happening. In fact we expect $\sigma_1(A_n-zI_n) = O(\sqrt{n} + |z|)$, by standard estimates so the logarithmic contribution of this term is negligible. 

The real concern is if there are abnormally \emph{small} singular values. To exclude this possibility, one traditionally needs to prove estimates of the general shape 
\begin{equation}\label{eq:sing-val-bound}
\PP\big( \sigma_{n-k}(A_n-zI) \leq \exp(-\eps n/k ) \big) = o(1),
\end{equation}
for any $\eps>0$, all $k \leq n-1$, and for Lebesgue almost every\footnote{Throughout the paper we will say ``almost all $z \in \C$'' to mean ``all $z \in \C$, apart from a set of Lebesgue measure zero''.} $z\in\C$. Indeed, \emph{heuristically} we typically have $\sigma_{n-k} = \Theta( k d^{1/2} n^{-1})$ and thus \eqref{eq:sing-val-bound} perhaps appears to be an easily surmounted obstacle, as it represents (what we expect to be) extremely abnormal behaviour. However, obtaining bounds of this type has recently been \emph{the} significant challenge in this area. For example, proving \eqref{eq:sing-val-bound} in the case of dense matrices was one of the principle achievements of the work Tao and Vu \cite{TV10} in their work on the circular law. For sparse matrices, the challenge is greater still as there is less ``randomness'' to use. For their sparse circular law, Rudelson and Tikhomirov \cite{RT19} develop a whole toolbox of sophisticated techniques to prove singular value estimates of the type\footnote{In fact, both Tao and Vu \cite{TV10} as well as Rudelson and Tikhomirov \cite{RT19} obtain better quantitative estimates on the least singular value than required, but this is not relevant to our discussion here.} \eqref{eq:sing-val-bound}.
However, these techniques are of limited use in the context of this paper and we are thus led to develop a different approach.

\subsection{The evolution of windows of singular values} In this paper, we don't directly look to prove \eqref{eq:sing-val-bound} but instead look to control the bottom \emph{window} of singular values
\begin{equation}\label{eq:W0-def} W_{n,0}(z) = - \frac{1}{n}\sum_{j=0}^{\delta n} \log\big( \s_{n-j}(A_n-zI_n) \big), \end{equation} for almost all $z$, where $\delta >0$ is chosen to be sufficiently small relative to $d,z,\eps$. 
And in place of \eqref{eq:sing-val-bound}, we shall show that for all $\eps > 0$ and almost all $z\in \C$, we have that 
\begin{equation}\label{eq:window-goal} \PP\big( W_{n,0}(z) \leq \eps   \big)  = 1-o(1),\end{equation}
as $n$ tends to infinity. (We point the reader to Lemma~\ref{lem:crucial} for a formal statement.)

While this, so far, is not much of a departure from the task of proving \eqref{eq:sing-val-bound}, our principal difference comes from how we approach $W_{n,0}(z)$, which is fundamentally dynamic: we build up the randomness in the matrix bit by bit and track how the singular values evolve. 

To best explain this, we first outline our method in the case $p=d/n$ where $d$ is large but fixed. We then go on to describe a more complicated revelation process, in Section~\ref{sec:extending-to-threshold}, that works for all $d$ down to the threshold $d>1$.

Now, we let\footnote{Throughout the paper we will write $\delta \ll \eps$ to mean, given $\eps>0$, we can choose $\delta>0$ to be sufficiently small in terms of $\eps$.} $\eps \ll 1/d$, and define $m = (1-\eps)n$ and reveal the top left $m\times m $ sub-matrix\footnote{Here we let $A_{s,t}$ denote the submatrix of $A_n$ defined by $(A_{ij})_{i\in [s],j \in [t]}$. We define $A_m = A_{m,m}$.} $A_m = A_{m,m}$ of $A_n$. We will then ``build up'' the matrix $A_n$ by alternately adding rows and columns until we fill out all of $A_n$,
\[ A_{m,m} \rightarrow A_{m,m+1} \rightarrow A_{m+1,m+1} \rightarrow \cdots \rightarrow A_{n,n}. \] 
While it might be most intuitive to try to directly control \eqref{eq:W0-def} as this process evolves, it is very unclear how to do this in practice. Instead, we control a window of larger eigenvalues and then incrementally ``slide'' this window downward, as the process evolves. 

More precisely, we define the \emph{window} $W_{t,r}(z)$ of singular values of $A_t-zI_t$ \emph{at height} $r$ to be the sum
\begin{equation}\label{eq:window-def} W_{t,r}(z) = - \frac{1}{n}\sum_{j=r}^{r+\delta n} \log\big( \s_{t-j}(A_t-zI_t) \big),\end{equation}
thus generalizing the definition of $W_{n,0}$ at \eqref{eq:W0-def}. We then initialize our process by showing that for all $\eps>0$ and almost all $z \in \C$ there exists $\delta = \delta(z,\eps,d)$ so that 
\begin{equation}\label{eq:initial-bound} W_{m,r_0}(z) \leq \eps^4/2 \qquad \text{ where }  \qquad r_0 = \eps^4m , \end{equation}
with high probability. (We point the reader to Lemma~\ref{lem:initial-step}, for the formal statement of this initialization step.)

We then run our process, exposing rows and columns one at a time. We will show that at each step of the process, with reasonably high probability, we have
\begin{equation}\label{eq:window-gets-pushed-0}
     W_{t+1, r-1}(z) \leq W_{t,r}(z)  + \eta_{r},
\end{equation} where the $\eta_r>0$ are positive numbers with the property that 
\begin{equation}\label{eq:prop-eta_r}  \sum_{r=0}^{r_0} \eta_r = o(1).\end{equation}
Thus if our current window is at height $r$ and \eqref{eq:window-gets-pushed-0} holds, we slide our window downward by $1$, (i.e. $r \rightarrow r-1$) thus bringing us closer to our goal of controlling the bottom window. Thus our goal is to show that at the end of the process, we will have slid our window all the way to $r=0$, allowing us to conclude that 
\[ W_{n,0}(z) \leq W_{m,r_0}(z) + \sum_{r=1}^{r_0} \eta_r  = o_{\eps \rightarrow 0} (1) ,\]
with high probability, for almost all $z$, as desired. 

\subsection{The random walk}\label{sec:sketch-random-walk}
This process defines a random walk on the height of our window, which we now turn to describe in a little more detail. Let us index time in our process by $t \in [m,n]$, so that at time $t$ we have exposed $A_{t}$. When we progress from $t$ to $t+1$, we add a column to $A_{t}$ and then a row, so that we have filled out $A_{t+1}$ by the end of the step. Define the \emph{height} of the process at time $t$ as the random variable $X_t$, where we initialize $X_m = r_0 = \eps^4m$ (cf. \eqref{eq:initial-bound}). Now if $X_t = r>0$, we have 
\begin{equation}\label{eq:window-gets-pushed}
     W_{t+1, r-1}(z) \leq W_{t,r}(z)  + \eta_{r}, 
\end{equation} 
we then set $X_{t+1} = X_{t}-1$. Otherwise, if \eqref{eq:window-gets-pushed} fails, we use the (trivial) fact that
\[ W_{t+1,r+1}(z) = W_{t,r}(z)  , \]
and set $X_{t+1} = X_{t}+1$. (Actually the definition of of our random process is slightly more complicated and we refer the reader to the proof of Lemma~\ref{lem:crucial} for the precise definition).

Thus, we are led to study the random walk defined by $X_t$. In particular, we look to show that 
\begin{equation}\label{eq:X_nis0} \PP\big( X_n = 0 \big) = 1-o(1), \end{equation} from which, we can conclude \eqref{eq:window-goal}.

There are two main ingredients in proving this. The first is to show that our random walk can at least get close to $0$ towards the end of the process. For this, it is enough to show that this random walk has a significant downward drift in each step. We show that 
\begin{equation} \label{eq:window-push} \PP\big(\hspace{0.5mm} W_{t+1, r-1}(z) \leq W_{t,r}(z)  + \eta_{r}\big) \geq 1-c_d,\end{equation}
where we can make $c_d \geq 0$ arbitrary small by making $d$ large. We can now use a standard martingale analysis: If each step $i$ has a downward drift close to $1$
(assuming $X_i >0$ and $d$ is large) and there are $\eps^3 n$ steps total, we have a total downward drift close to $\eps^3n$, which easily allows us to traverse the distance to the bottom window of singular values, since we started at height $r_0 = \eps^4n$. This allows us to conclude that
\[ \PP( X_n = O(1) ) = 1 -o(1), \]
and thus our random walk gets close to $0$ towards the end of the process. 

Before going on to describe how we ensure that our random walk sticks to $0$, we mention that the proof of \eqref{eq:window-push} is the most technical element of the paper and accounts for much of its bulk. We will discuss it quite a bit more in Section~\ref{sec:sketch-enough-drift}.

The second step towards proving $X_n = 0$, with high probability, requires us to take advantage of another element of our set up. 

\subsection{The end of the process}\label{sec:end-of-the-process}
For an iid Bernoulli matrix, with $p = d/n$, the number of ones in a row or column is approximately distributed as a Poisson random variable with parameter $d$. Thus, the number of $i\in [n]$ for which the $i$th row and $i$th column both have exactly $k$ ones should be about 
\[ n\cdot \PP( \on{Po}(d) = k )^2 = n \cdot (e^{-d}/k!)^2,\] with high probability. Thus we expect many row/column pairs with significantly more than the average number of ones in them. The key idea is that, by the symmetry of rows and columns, we can arrange for these heavy rows/column pairs to appear at the very end of the process and ``push'' our random walk to $0$.

In practice, we show that we can couple our matrix $A_n$ to a random matrix $B_n$ which is identically distributed to $A_n$ apart from the fact that the last $\ell = (\log n)^2$ rows and columns are independent of the other entries and are iid Bernoullis with parameter\footnote{These numbers are chosen somewhat arbitrarily - we really just need $\omega(1)$ rows with $\omega(1)$ ones in each, but we have chosen to work concretely).} $\tau = \sqrt{\log n}/n$. Thus we can work almost entirely with the matrix $B_n$ and ensure that the last row/column pairs have lots of ones, with high probability. 

Intuitively, this is possible since the standard deviation of the number of such heavy row/column pairs is $n^{1/2-o(1)}$ and thus forcing an additional $\ell = (\log n)^{2}$ such pairs only moves the distribution by a small amount in total variation

Thus, after replacing $A_n$ with $B_n$, we can upgrade \eqref{eq:window-push} to the stronger estimate
\begin{equation}\label{eq:window-gets-pushes-heavy} \PP_B\big( W_{t+1,r-1}(z) \leq W_{t,r}(z) + \eta_r \big) \geq 1-(\log n)^{-\Omega(1)}, \end{equation} for $t \in [n-\ell,n]$. This means 
\[\PP_B( X_{t+1} = X_t-1 ) \geq 1-(\log n)^{-\Omega(1)},\] for $t \in [n-\ell,n]$ and when $X_t >0$, which allows us to upgrade our initial bound of $\PP( X_n = O(1)) = 1-o(1)$ to $\PP(X_n=0) = 1-o(1)$, exactly as we wanted. 

Formally this coupling step is Lemma~\ref{lem:TV-estimate}, which allows us to work with the random matrix $B = B_n$, rather than $A = A_n$, for much of the paper. It will be quite common for us to deal with the two \emph{epochs} of the process
\[ t \in [m,n-\ell] \qquad  \text{ and } \qquad t \in [n-\ell,n] \] slightly differently. Thus we refer to these two intervals of time as the \emph{first} and the \emph{second epochs}, respectively.

\subsection{Extending to the threshold}\label{sec:extending-to-threshold}
In our discussion above we were assuming $d$ was sufficiently large throughout. We now turn to discuss how to adjust the proof that we sketched above to accommodate all $d$ down to the threshold $d > 1$. 

Now, if we naively attempt to run the process defined above for $d \sim 1$, we quickly run into a problem; the drift at \eqref{eq:window-push} is not necessarily sufficient to ensure that our random walk satisfies $X_n = O(1)$, with high probability. 

To get around this, we start by noting that we are only using the final $\eps^4m$ rows and columns in our process. Thus, taking inspiration from the idea of rearranging the rows and columns that we saw above, we try to simulate a matrix with larger $d$ by moving the heaviest rows and columns to the end of the process. Recalling that we expect about $n \cdot e^{-2d}/(k!)^2$ row/column pairs with $k$ ones, it seems reasonable that we could arrange for the last $\eps^4m $ row/columns pairs to contain $(\log 1/\eps)^{1-o(1)}$ ones, thus allowing us to work as if the graph we encounter in our process has degree $\gg d$. (Recall $\eps>0$ can be chosen arbitrarily small here).  
The mechanism for moving these heavy rows and columns to the end of the process is a little different from what we described above and more similar to an idea used in the recent papers~\cite{FKSS23,GKSS23}. 

To describe this a little more carefully, we think of our matrix as a directed graph on $[n] = \{1,\ldots, n\}$ where $(i,j)$ is an edge if the $ij$th entry is $1$. We then begin our exposure process by setting aside the vertices $[(1-\eps)n]$, corresponding to the top left principal submatrix. We then expose the quantities
\begin{equation}\label{eq:sketch-deg-def} \deg^{+}\big(j, [n-\ell]\hspace{0.3mm}\big) = \big|N^{+}(j) \cap [n-\ell]\big|  \quad \text{ and } \quad \deg^{-}\big(j,[n-\ell]\hspace{0.3mm}\big) = \big|N^{-}(j) \cap [n-\ell]\big|,\end{equation}
for all $j \in [(1-\eps)n, n-\ell]$. (Here we are still setting aside the last $\ell = (\log n)^2$ steps, as discussed in Section~\ref{sec:end-of-the-process}.) We then define the \emph{value} of a vertex $j \in [(1-\eps)n, n-\ell]$ to be the minimum of these degrees in \eqref{eq:sketch-deg-def}.

We then move the $\eps^3n$ vertices among $[(1-\eps)n, n-\ell]$ with the largest value to the end of our ordering. Crucially these vertices will have value $\geq (\log 1/\eps)^{1-o(1)}$, with high probability. It is important that we are not revealing the actual rows and columns (or equivalently the neighborhoods of vertices) here and only these partial row/column sums (equivalently the degrees of vertices). Thus there is still quite a bit of randomness to expose in the running of the process. 

We will then prove a variant of \eqref{eq:window-push} of the form
\begin{equation} \label{eq:window-push-modified} \PP\big( W_{t+1, r-1}(z) \leq W_{t,r}(z)  + \eta_{r}\big) = 1-(\log 1/\eps)^{-\Omega(1)}, \end{equation}
which we can make arbitrary close to $1$ by making $\eps$ small. We note here that the probability in \eqref{eq:window-push-modified} is over a random sample of a matrix with the given degree sequence (equivalently partial row/column sums) and thus we are led to work with a configuration model for a random digraph with a given degree sequence to estimate these probabilities.

Thus, with \eqref{eq:window-push-modified},  we recover the same behavior we saw for $d$ large in \eqref{eq:window-push}, thereby allowing us to conclude that $X_{n-\ell} = O(1)$, with high probability. We then boost this to $X_n = 0$, with high probability, in the same way we saw above. Indeed, we have already arranged that the last $\ell = (\log n)^2$ rows and columns are heavy in $B_n$ and thus push our random walk to zero.

\subsection{Obtaining sufficient downward drift}\label{sec:sketch-enough-drift} So far we have only discussed our strategy in the broadest of terms, defining our process and sketching how we expect it to evolve. In what follows we elaborate on the key technical ingredients that make this strategy work. We will focus on a discussion of the proof of \eqref{eq:window-gets-pushes-heavy} and \eqref{eq:window-push-modified}, which will consume our attention for the bulk of this paper. 

To prove these statements, we start with a purely deterministic result which relates a window of singular values with an adjacent window in a matrix with a row added. Let $B_t$ be our $t \times t $ matrix at time $t$, and let $B_{t,t+1}$ be the matrix obtained by adding the row $X$ to $B_t$. (Recall that we are now working with the matrices $B_t$ after the replacement step described in Section~\ref{sec:end-of-the-process}). We prove that 
\begin{equation}\label{eq:lin-alg-prod} -\frac{1}{n}\sum_{i=r}^{r+ b} \log \s_{t-i}(B_{t,t+1}) \leq  -\frac{1}{n}\sum_{i=r+1}^{r+1+b}\log \s_{t-i}(B_{t}) + f(B_t,X), \end{equation}
where $b = \delta n$ and the quantity $f$ is defined by
\begin{equation}\label{eq:def-f(M,X)} f(B_t,X) = \frac{1}{2n}\log\bigg(\frac{\snorm{X}_2^2 + \sigma_{t-(r+1+b)}(B_t)^2}{\|P X \|_2^2}\bigg),\end{equation}
where $P$ is the orthogonal projection onto the span of the $r$ smallest right-singular vectors of $B_t$. (We point the reader to Proposition~\ref{prop:walk-row-modified} for the formal statement of this result). 

We remark here that what we have described in \eqref{eq:lin-alg-prod} represents only a half-step. Thus we also have a very similar inequality relating a window of singular values of $A_{t,t+1}$ with the singular values of $A_{t+1}$. Putting these together constitutes a full step in the process.

Thus \eqref{eq:lin-alg-prod} reduces our task to showing that it is unlikely
for the projection of $X$ onto the $r$ smallest singular directions of $M$ to be extremely small -- ignoring the terms $\|X\|_2$ and $\sigma_{t-(r+1+b)}(B_t)$ in \eqref{eq:def-f(M,X)}, which are relatively easy to bring under control. While this is not true for a deterministic matrix $B_t$
and a random row $X$, we will be able to show that if $B_t$ satisfies some particular quasi-randomness conditions (which we will come to define shortly)
we can obtain such a result. In particular we will prove the following key anticoncentration estimate 
\begin{equation}\label{eq:ker-anti-concentration}
\PP_X\big( \|P X\|_2 < \exp(-n\eta_r) \big) = o_{\eps \rightarrow 0}(1),
\end{equation}
in the first epoch and the anticoncentration estimate
\begin{equation}\label{eq:ker-anti-concentration-2}
\PP_X\big( \|P X\|_2 < \exp(-n\eta_r) \big) = O((\log n)^{-\Omega(1)}),
\end{equation}
in the second epoch, assuming that $B_t$ satisfies the relevant quasi-randomness conditions in each. 

Now, the fact that we have reduced the problem of proving a spectral law to proving an anticoncentration estimate, will not come as a surprise to those familiar with this area. Indeed, proving statements of the general shape \eqref{eq:ker-anti-concentration} amounts to one of the key technical challenges in this area. However, what \emph{is} perhaps surprising is that the anticoncentration result we need here is vastly weaker than what is required in other examples in the literature. Indeed, in other papers in this area, anticoncentration estimates have to be sufficiently strong so that one can sum them over an $\eps$-net of a $n$-dimensional sphere. Moreover the ``quasi-randomness'' condition that we use will be a relatively weak and flexible notion of graph expansion and will be easily proved to hold with high probability (modulo some computations with the configuration model).

\subsection{Unique neighbourhood expansions}\label{sec:sketch-une}
We now turn to discuss our approach to proving \eqref{eq:ker-anti-concentration} and \eqref{eq:ker-anti-concentration-2}. This will lead us to define the notion of ``unique neighbourhood expansions'' which we will then, in turn, use to define our important quasi-randomness properties $\cU(r)$. 

But before we get to this, we pause to note that an estimate of the form \eqref{eq:ker-anti-concentration} need not hold in general. Indeed if $P$ is the matrix with rows $e_1,\ldots, e_r$ (i.e. the projection matrix onto the first $r$-coordinates), then 
\begin{equation}\label{eq:sketch-bad-subspace-eg} \PP_X(\|PX\|_2 = 0 ) \geq \PP_X( X_1 = 0, \ldots, X_r = 0 ) = (1-p)^r ,\end{equation} which is  $1-o(1)$ whenever $r = o(n)$. (Here we are ignoring the shift $z$ on the diagonal for the moment). So for bounds of the form~\eqref{eq:ker-anti-concentration}, \eqref{eq:ker-anti-concentration-2} we need to use some additional information about our matrix $B_{t}$ to exclude these bad examples. Here we introduce a simple notion based on the expansion properties of the underlying directed graph.

In particular, let $M$ be an $m\times s$ matrix and let $S\subset [s]$ be a set of columns of $M$. We define the \emph{unique neighbourhood} of $S$, denoted $U(S)\subset [m]$, as follows. We first define 
\[ U(S) \setminus S = \big\{ i \in [m]\setminus S\colon B_{ij} = 1 \text{ for a unique } j \in S\big\} \]
and then define
\[ \hspace{-2.5em} U(S) \cap S = \big\{i\in[m]\cap S\colon B_{ij} = 0 \text{ for all } j \in S \big\}. \] 
Our simple, yet essential, quasi-randomness property is that the unique neighbourhoods $U(S)$ have to be ``large'' for all sets $S$ of appropriate size. More precisely, we define $\alpha(x) = (\log(n/x))^{-2}$ and then define our quasi-randomness event $\cU(r)$, for $r \in [n]$, to be 
\begin{equation}\label{eq:sketch-une} |U(S)|\ge\alpha(|S|)|S|,\end{equation}
for all sets $S$ with $r\leq |S| \leq \kappa n$, where $\eps \ll \kappa \ll 1/d$. 

As we will discuss more in the following subsection, the property $\cU(r)$ will be used to show that vectors that are ``close'' to the kernel of $B_t-zI_t$ spread their mass over their coordinates. While it is a bit harder to see this and how this fits into the context of proving \eqref{eq:ker-anti-concentration}, it is not hard to see why something like this might be true. Indeed, let us first (carefully) observe that if $M$ is an $n\times n$ matrix for which $U(r)$ holds and $u \in \C^n$ then 
\[ |\supp((M-zI_n)u)| \geq |U(\supp(u))|,\]
when $z\not=0$. Thus if $u$ is actually \emph{in} the kernel of $M-zI_n$, and $|\supp(u)|\geq r$ then we can actually conclude that $|\supp(u)|\geq \kappa n$. That is, the mass of $u$ is ``spread'' or ``unstructured'', in a sense.

In Section~\ref{sec:UNE} we formally define these events and show that the events 
\[ \cU((\log n)^{3/2}) \qquad \text{ and } \qquad \cU((\log\log n)^2),\] hold with high probability for the matrices $B_t$ in the first and second epochs of the process, respectively. 

In fact, our strategy for proving that the events $U(r)$ hold is quite straightforward; we simply show that the probability that a set $S$ fails \eqref{eq:sketch-une} is small enough that we can union bound over all appropriate sets $S$. In practice, this is a little tricky since we are forced to work with the configuration model rather than with iid samples in the first epoch of the process. As we discussed above, this is because we have already revealed the row and column sums when we were rearranging the matrix (as we discussed in Section~\ref{sec:extending-to-threshold}).

\subsection{Spreadness of near kernel vectors}\label{sec:sketch-spreadness}

We now sketch how the unique neighborhood expansion is linked to the probability in \eqref{eq:ker-anti-concentration}. This is formally carried out in Section~\ref{sub:unstructuredness}, where we prove near-kernel vectors are unstructured (which we discuss below), and in Section~\ref{sec:anticoncentration}, where we use this unstructured property to prove~\eqref{eq:ker-anti-concentration}.

We recall that the projection $P$ in \eqref{eq:ker-anti-concentration} is the orthogonal projection onto the span of the eigenvectors corresponding to the $r$ smallest singular values of $B_{t}$, where $r$ is the current window height. Without loss, we can assume that these singular values are reasonably small, and in particular, 
\[ \sigma_t(B_t-zI_t) \leq \cdots \leq \sigma_{t-r+1}(B_t-zI_t) < \exp(-n\eta_r) =: \eps_r, \]
otherwise we could have pushed our window down further. We let $v_1,\ldots ,v_r$ be the corresponding singular vectors and define $V_t = V(B_t,r)$ to be the linear span of the $v_i$. Thus, for any unit length $u \in V_t$ we have 
\begin{equation}\label{eq:near-kernel-cond} \big\| (B_{t} - zI_t)u \big\|_2 < \eps_r.\end{equation}
This last expression is quickly seen to be relevant since if $u_1,\ldots u_r$ is any orthonormal basis of $V_t$, we see that 
\[ \|PX\|_2^2 = \la u_1,X \ra^2 + \cdots +\la u_r,X\ra^2 \]
and thus the anticoncentration properties of $\|PX\|_2$ are determined by the anticoncentration properties of the vectors $u_i$, which, by \eqref{eq:near-kernel-cond}, are near-kernel vectors of $B_t$.

We then will prove that $\|PX\|_2^2$ is anti-concentrated 
in two slightly different ways, depending on the epoch, and resulting in the estimates \eqref{eq:ker-anti-concentration} and \eqref{eq:ker-anti-concentration-2}, respectively. 

In the first epoch, we can assume $r \geq (\log n)^{3/2}$ and, in this case, we use a result of Litvak, Lytova, Tikhomirov, Tomczak-Jaegermann, and Youssef (see Lemma~\ref{lem:basis}) to find a basis $u_1,\ldots ,u_r$ of $V_t$ where the first $r$ entries of each vector $u_i$ have absolute value $\Omega(r^{-1/2}n^{-1})$. We can then use this, along with \eqref{eq:near-kernel-cond} and the property $\cU((\log n)^{3/2})$, to learn that none of the $u_i$ are close to being constant. We prove that for all $i$, we have 
\begin{equation}\label{eq:sketch-spreadness}
\sup_{\theta \in \R} \big|\big\{j \colon| (u_i)_j-\theta|\le n^{-1/2}\exp(-c(\log 2n/k)^7)\big\}\big|\le(1-c)n,\end{equation}
for some $c>0$. (See Proposition~\ref{prop:unstructured-main-1} for a formal statement.)

We then use this result in Section~\ref{sec:anticoncentration}, along with a fairly standard anticoncentration estimate, (see Lemma~\ref{lem:slice-anti}) to prove \eqref{eq:ker-anti-concentration} holds in the first epoch. We remark that here \eqref{eq:sketch-spreadness} is the natural condition for a minimal anticoncentration estimate since in the first epoch the row and column sums are already determined. Thus if $X$ is a random row or column from this epoch, $\la \1 , X\ra$ is constant.

In the second epoch, the rows and columns we add consist of iid $\Ber(\tau)$ entries, where $\tau = \sqrt{\log n}/n$ and so we can get away with a weaker spreadness condition. Here we can assume that our window height $r$ is typically $O(1)$ and thus $\dim V_t = O(1)$. In this case, we simply focus on a single unit vector $u\in V_t$ and use \eqref{eq:near-kernel-cond} along with the property $\cU(\log \log n)$ to ensure that $u$ is relatively well spread on its support. Formally, we show
\[ \big|\{ i : |u_i| \ge n^{-1/2} \cdot \exp(-C(\log n)^7) \big\}\big| \geq cn ,\]
for some constants $C,c>0$ (see Propositions~\ref{prop:unstructured-main-2} and \ref{prop:unstructured-main-3}). We then use this, in Section~\ref{sec:anticoncentration}, to prove 
\[
\PP_X\big( \|P X\|_2 < \exp(-n\eta_r) \big) = (\log n)^{-\Omega(1)},\]
in the second epoch. We actually require a secondary quasi-randomness property $\cU^{\ast}$ for this epoch, but since its definition and use are similar to the above, we don't describe this in any more detail.

\subsection{Initializing the process}\label{sec:sketch-initializing}

So far, we have given a fairly complete sketch of the proof of \eqref{eq:window-push-modified}, which allows us to say the random walk has enough downward drift to get pushed, and then eventually stick to $X_n = 0$. We have not said anything, however, of how we initialize our process. That is, we have said nothing about how we assume control of the \emph{first} window. This step is carried out in Section~\ref{sec:non-atomic-singular} and is of a different flavor from the rest of the proof. It relies on an simple analytic idea, which we take a moment to sketch here. 

Here the problem reduces to showing that, for almost all $z \in \C$, the singular value measure of $B_m-zI_m$, which we denote by 
\[ \nu_{z,n}' = \frac{1}{m} \sum \delta_{\sigma}, \]
tends to a deterministic limit, $\nu_{z,n}' \rightsquigarrow \nu_z'$ in probability, and moreover this limit is non-atomic at $0$; meaning, $\nu'_z(\{0\}) = 0$. Indeed, if we have this, we can ensure that all of the singular values in the sum $W_{m,\gamma n}(z)$, for any fixed $\g>0$, are bounded away from $0$. And from this, we can conclude that 
\begin{equation}\label{eq:Wmgn} W_{m,\gamma n}(z) = o_{\delta \rightarrow 0}(1), \end{equation} for almost all $z$, as desired.

To prove that $\nu_z'$ is non-atomic at $0$, for almost all $z$, we work by contradiction. If $\{ z : \nu'_m(\{0\}) > 0 \}$ has positive measure, there exists $\g>0$ and a $\rho>0$ so that  
\[ S_{\g,\rho} = \big\{ z : |z| = \rho \text{ and } \nu'_{z}(\{0\}) > \g   \big\} \] has positive measure relative to the uniform probability measure on the circle $\{ z : |z| = r\}$. Using this, one can show that (being vague here about our notion of convergence)
\begin{equation}\label{eq:UBm-large}\int_0^{2\pi} U_{B_m}(\rho e^{i\theta}) \rightarrow  + \infty,\end{equation}
as $m$ tends to infinity. Here we are writing $U_{B_m}$ to denote the logarithmic potential of the spectral measure of $B_m$. Roughly speaking, this holds since a) $U_{B_{m}}(z)$  is large and positive when $z\in S_{\g,\rho}$ b) $S_{\g,\rho}$ has positive measure on the contour we are integrating over c) $U_{B_m}(z)$ typically cannot be too negative and so there is no significant cancellation. However, \eqref{eq:UBm-large} contradicts the deterministic fact of complex analysis 
\[ \int_{0}^{2\pi} U_{B_m}(z) = \int_0^{2\pi} \frac{1}{m}\sum_i \log |z- \lambda_i| = O_{|z|,\rho}(1),\]
which can be seen by applying, for example, Jensen's formula. 
Thus we obtain a contradiction and see that the set $\{ z : \nu_n'(z)(\{0\}) >0 \}$ must have measure zero. Thus we can conclude \eqref{eq:Wmgn} for almost all $z \in \C$, as desired and ensure that for almost all $z$ we can initialize  our process.

\subsection{Organization of the paper}\label{sub:organization}

In Section~\ref{sec:setup} we formally define the random process that we use to incrementally expose the matrix $B_n$. We then go on to define the various notations regarding the random walk which will be used throughout the paper. In Section~\ref{sec:coupling}, we prove the coupling estimate between our iid $\on{Ber}(d/n)$ matrix $A_n$ and the random matrix $B = B_n$. This allows us to restrict our study to the random matrix $B_n$ for the remainder of the paper. 

In Section~\ref{sec:deg-seq} we prepare for Section~\ref{sec:UNE} by proving some basic properties about the row and column sums of our matrix $B_n$. In Section~\ref{sec:UNE}, we prove that the appropriate quasi-randomness properties $\cU(r)$ hold in both epochs. In Section~\ref{sub:unstructuredness}, we show how the properties $\cU(r)$ imply spreadness and unstructuredness of near minimal singular vectors. In Section~\ref{sec:anticoncentration}, we prove that these spreadness results imply that the image of the new row or column added is unlikely to lie near the subspace spanned by near minimal singular vectors. In Section~\ref{sec:random-walk}, we prove a basic result regarding random walks with drift.

In Section~\ref{sec:update-formula} we prove the crucial linear algebra lemma which allows one to convert the projection onto near minimal singular vectors into a shift of the window of singular values downward. In Section~\ref{sec:singular-convergence} we prove convergence of the singular values of shifted random matrices. In Section~\ref{sec:non-atomic-singular} we prove that the shifted singular value measures have no atom at $0$ for Lebesgue almost all $z$. In Section~\ref{sec:crucial}, we run our random walk argument to show that the bottom window of singular values is bounded with high probability. In Section~\ref{sec:graph-argument} we prove Theorem~\ref{thm:main} when $d\le 1$ using graph theoretic techniques. Finally, in Section~\ref{sec:proof} we complete the proof, by using our bound on the bottom window of singular values to control the convergence of the logarithmic potential.

\section{Definition of the random process}\label{sec:setup}
We define our main object of study $A = A_m$ to be an $n\times n$ matrix with independent $\{0,1\}$-entries such that 
\begin{equation}\label{eq:Adef}
\mb{P}(A_{ij} = 1) = d/n. 
\end{equation}
We now let $\ell = \lfloor(\log n)^2\rfloor$ and then define $B = B_n$ to be the $n\times n$ random matrix with independent entries in $\{0,1\}$ such that 
\begin{equation}\label{eq:Bdef}
\mb{P}(B_{ij} = 1)= \begin{cases} \, d/n \hspace{6.5em} \text{ if } \max(i,j)\le n-\ell \text{ and } \\ \,\sqrt{\log n}/n \hspace{4em} \text{ otherwise.}\end{cases}
\end{equation}
As we discussed in Section~\ref{sec:end-of-the-process}, we will relate our given matrix with the matrix $B$ and then spend most of the proof working with $B$. In what follows we formally define the random process which ``reveals'' $B$.

\subsection{Formal definition of the process}\label{sec:formal-def-of-process} We now formally define our random process, which we sketched in Sections~\eqref{sec:end-of-the-process} and \ref{sec:extending-to-threshold}. This is a careful way of exposing the rows and columns of our matrix $B$. 

Throughout the analysis of our random process we assume that $d$ is fixed and $d > 1$. (We treat $d\leq 1$ by other means in Section~\ref{sec:graph-argument}). Our process takes as a parameter a integer $\Delta$, which we send to infinity in the end of the proof. We let 
\[ \eps = \mb{P}\big( \on{Po}(d)\ge\Delta \big) .\]
We now partition the $[n]  = T_1 \cup T_2 \cup T_3$ by defining, 
\begin{equation}\label{eq:T1T2T3} T_1 = [\lfloor n(1-\eps)\rfloor],~T_2 = [\lfloor n(1-\eps)\rfloor+1, n - \ell],\text{ and }T_3 = [n-\ell+1, n].\end{equation}
Now for each $j \in T_2$, define the \emph{value} of $j$ to be 
\[\on{val}(j) = \min\big\{\deg_B^+(j,[n-\ell])\, ,\, \deg_B^-(j,[n-\ell])\big\},\] where $\deg_B^+(j,[n-\ell]),\, \deg_B^-(j,[n-\ell])$, denote the in (respectively) out neighbors of $j$ in $[n-\ell]$.
Here we are associating $\{0,1\}$-matrices to digraphs in the natural way (see Appendix~\ref{sub:digraph-notation}). 

We now define $H \subset T_2$ to be the $\lfloor\eps^3 n\rfloor$ indices $j$ in $T_2$ with largest $\on{val}(j)$ (breaking ties by choosing earlier indices in the integer ordering first).

We now iteratively build our matrix $B$ such that $B_n = B$. Define $m = m(\eps) = n-\ell - \lfloor \eps^{3}n\rfloor$. Then define a sequence of random sets $S_t$ for $m\le t\le n$ and random indices $v_t$ for $m+1\le j\le n$ as follows. First set 
\[ S_m = T_1 \cup (T_2\setminus H). \]
Now for $j$ satisfying $m \leq j < n-\ell$, we select 
\[ v_{j+1} \in H \setminus S_j, \]
uniformly at random and set $S_{j+1} = S_j \cup \{ v_{j+1} \}$. For $n-\ell \leq j < n$ we simply set 
\[ v_{j+1} = j+1 \]
and then define $S_{j+1} = S_j \cup \{ v_{j+1} \}$.
We then define the sequence of random matrices\footnote{If $M$ is a matrix $S$ is a subset of the rows and $T$ is a subset of the columns, we let $M[S,T]$ denote the $|S| \times |T|$ matrix $(M_{i,j} : i \in S , j \in T )$ }
\[B_t = B[S_j,S_j], \]  for $ m\le t\le n$ and 
\[ B_{t}^{\ast} = B[S_{t-1},S_{t}], \] for all $m+1\le t\le n$. 

We refer to the time steps $t \in [m,n-\ell]$ as the \emph{first epoch} of the process and $t \in  [n-\ell + 1,n]$ as the \emph{second epoch}. 


\subsection{Elementary properties of the process}
We now turn to state a series of elementary properties regarding the random walk and the matrices $B_t$. 
\begin{fact}\label{fact:end-ind}
We have $S_{n-\ell} = [n-\ell]$ deterministically. Furthermore, indices $B_{i,j}$ for $\max(i,j)\ge n-\ell+1$ are jointly independent of $B_t,B_t^{\ast}$ for $t\le n-\ell$.
\end{fact}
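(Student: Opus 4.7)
The plan is to verify the two assertions separately: a cardinality/counting check for the first, and a measurability-plus-independence argument for the second.

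For the claim $S_{n-\ell}=[n-\ell]$, I would just track sizes. Note $|T_1|+|T_2|=n-\ell$ and $|H|=\lfloor\eps^3 n\rfloor$, so $|S_m|=|T_1\cup(T_2\setminus H)|=n-\ell-\lfloor\eps^3 n\rfloor=m$. Each step with $m\le j<n-\ell$ adjoins a single element $v_{j+1}\in H\setminus S_j$, which is well-defined precisely because the number of steps $n-\ell-m=\lfloor\eps^3 n\rfloor$ equals $|H|$. In particular, every element of $H$ is chosen exactly once over the course of the first epoch, so $S_{n-\ell}=S_m\cup H=T_1\cup T_2=[n-\ell]$, and this conclusion does not depend on any random choices.

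For the independence statement, the plan is to show that the entire filtration generated by $\{B_t,B_t^\ast:t\le n-\ell\}$ is a function of (i) the sub-array $B|_{[n-\ell]\times[n-\ell]}$ and (ii) the external randomness used to realize the uniform draws $v_{j+1}=\on{Unif}(H\setminus S_j)$. Indeed, $\on{val}(j)=\min(\deg_B^+(j,[n-\ell]),\deg_B^-(j,[n-\ell]))$ for $j\in T_2$ depends only on entries $B_{ik}$ with $i,k\in[n-\ell]$, so $H$ is measurable with respect to (i). Given $H$ and the uniform draws, the sequence $S_m\subseteq\cdots\subseteq S_{n-\ell}$ is a subset of $[n-\ell]$, so every $B_t=B[S_t,S_t]$ and $B_t^\ast=B[S_{t-1},S_t]$ with $t\le n-\ell$ reads only entries of $B|_{[n-\ell]\times[n-\ell]}$. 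By the product structure in the definition of $B$, the family $\{B_{ij}:\max(i,j)\ge n-\ell+1\}$ is independent of $B|_{[n-\ell]\times[n-\ell]}$, and by construction it is also independent of the external randomness driving the uniform selections. Independence of the two $\sigma$-algebras follows.

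I do not foresee any real obstacle; the only care needed is to make explicit that the uniform draws are external randomness (independent of $B$), which is implicit in the phrase $\on{Unif}(H\setminus S_j)$, and to check that $H$ itself does not introduce a hidden dependence on entries outside $[n-\ell]\times[n-\ell]$, which is immediate because $\on{val}$ restricts degrees to $[n-\ell]$.
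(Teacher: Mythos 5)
Your proof is correct and follows essentially the same route as the paper: the first claim by counting (every element of $H$ is added exactly once over $\lfloor\eps^3 n\rfloor$ steps), and the second by observing that $H$, the sets $S_j$, and hence all $B_t,B_t^\ast$ with $t\le n-\ell$ are measurable in the entries $B_{ij}$ with $\max(i,j)\le n-\ell$ together with the external selection randomness, both independent of the remaining entries. The paper's version is terser but makes the same two observations.
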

\begin{proof}
The fact $S_{n-\ell} = [n-\ell]$ follows by construction of the sequence $v_j$. For the second part of the claim, note that the ordering of indices to form $S_{n-\ell}$ is only dependent on the entries of $B_{i,j}$ with $\max(i,j)\le n-\ell$ (and independent randomness). Since $B_t,B_t^\ast$ are submatrices of $B_{n-\ell}$ defined in terms of randomness independent from the last $\ell$ rows and columns, the desired result follows from the initial definition of $B$.
\end{proof}

We now state the state the obvious but crucial symmetry property of $B$.
\begin{fact}\label{fact:symmetry}
Conditional on the value $\sum_{1\le i,j\le n-\ell} B_{ij}$, the matrix $B_{n-\ell}$ is uniform over all $(n-\ell)\times(n-\ell)$ matrices with $\{0,1\}$ entries and exactly $\sum_{1\le i,j\le n-\ell} B_{ij}$ many $1$s.
\end{fact}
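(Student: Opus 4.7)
The plan is to reduce the statement to a standard fact about iid Bernoulli random variables conditioned on their sum. The first step is to invoke Fact~\ref{fact:end-ind} to identify $S_{n-\ell} = [n-\ell]$ deterministically, so that $B_{n-\ell} = B[[n-\ell],[n-\ell]]$ is precisely the top-left $(n-\ell)\times (n-\ell)$ principal submatrix of $B$, with no randomness coming from the index selection process. In particular, the entries of $B_{n-\ell}$ coincide with the entries $B_{ij}$ for $\max(i,j)\le n-\ell$, which by the definition \eqref{eq:Bdef} are independent $\on{Ber}(d/n)$ random variables.

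The second step is the routine observation that if $X_1,\dots,X_N$ are iid $\on{Ber}(p)$ with $p\in(0,1)$, then conditional on $\sum_i X_i = k$, the vector $(X_1,\dots,X_N)$ is uniform on $\{0,1\}$-vectors of Hamming weight $k$. This is because any fixed vector $x\in\{0,1\}^N$ of Hamming weight $k$ has probability $p^k(1-p)^{N-k}$ under the unconditioned law, and this value depends only on $k$; the conditional probability is therefore $\binom{N}{k}^{-1}$ for every such $x$, independent of the specific pattern. Applied with $N = (n-\ell)^2$ to the entries of $B_{n-\ell}$, this gives the desired uniform distribution on $(n-\ell)\times(n-\ell)$ $\{0,1\}$-matrices with the prescribed total number of $1$s.

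There is essentially no obstacle here: once Fact~\ref{fact:end-ind} pins down $S_{n-\ell}$ deterministically, the argument is the classical exchangeability of iid Bernoulli coordinates under sum-conditioning. The only thing to be mildly careful about is that the conditioning is on the sum of the entries of $B_{n-\ell}$ itself rather than on any auxiliary randomness used to build the ordering $v_{m+1},\dots,v_{n-\ell}$; but since $S_{n-\ell}$ is deterministic, no such extra conditioning enters, and the claim follows.
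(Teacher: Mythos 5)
Your proof is correct, and since the paper presents Fact~\ref{fact:symmetry} as an unproved ``obvious'' observation, your argument simply fills in the standard reasoning the authors had in mind: $S_{n-\ell}=[n-\ell]$ is deterministic by Fact~\ref{fact:end-ind}, the entries of $B_{n-\ell}$ are iid $\on{Ber}(d/n)$ by the definition of $B$, and iid Bernoulli variables conditioned on their sum are exchangeable, hence uniform over configurations with that sum.
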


We next prove that the distribution of $B_t$ is a mixture of degree-constrained random graphs. This property will be used to establish unique-neighborhood expansion facts.
\begin{fact}\label{fact:deg-seq}
The set $H$ is measurable given the degree sequence $(\mbf d_{n-\ell}, \mbf d_{n-\ell}')$ of $B_{n-\ell}$. Additionally, given $j$ such that $m\le j\le n-\ell$, conditional on the index set $S_j$, and conditional on the degree sequence $(\mbf d_j, \mbf d_j')$ of $B_j$, the random variable $B_j$ is a uniformly random bipartite graph with degree sequence $(\mbf d_j, \mbf d_j')$.
\end{fact}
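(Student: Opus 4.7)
The plan is to address the two claims separately. For the first, I would observe that for each $j\in T_2$ the quantities $\deg_B^+(j,[n-\ell])$ and $\deg_B^-(j,[n-\ell])$ are precisely the $j$-th row sum and column sum of $B_{n-\ell}$, since $B_{n-\ell}=B[[n-\ell],[n-\ell]]$. Hence $\on{val}(j)$ is determined by the degree sequence $(\mathbf{d}_{n-\ell},\mathbf{d}_{n-\ell}')$, and since $H$ is obtained by selecting the $\lfloor\eps^3 n\rfloor$ indices in $T_2$ of largest $\on{val}(j)$ with deterministic tie-breaking, $H$ is a measurable function of these degree sequences.

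For the second claim, the key conditional-independence observation is that the random ordering of $H$ used to determine $v_{m+1},\ldots,v_{n-\ell}$ is by construction independent of $B$; since $S_j$ is a deterministic function of $H$ and this ordering, and $H$ in turn depends on $B$ only through the degree sequence of $B_{n-\ell}$ by the first claim, $S_j$ depends on $B$ only through that degree sequence. Moreover, because the entries of $B_{n-\ell}$ are iid $\on{Ber}(d/n)$, the probability $\mb{P}(B_{n-\ell}=B_0) = (d/n)^{|B_0|}(1-d/n)^{(n-\ell)^2-|B_0|}$ depends only on the number of ones in $B_0$, so conditional on the degree sequence $\mathbf{D}$ of $B_{n-\ell}$ the matrix $B_{n-\ell}$ is uniform over all $0/1$ matrices with that degree sequence. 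Combining these facts, for any $(S,\mathbf{D})$ consistent with the process and any $|S|\times |S|$ matrix $M$ one obtains
\[
\mb{P}(B_j = M,\, S_j = S \mid \mathbf{D}) = \mb{P}(S_j = S \mid \mathbf{D})\cdot \mb{P}(B_{n-\ell}[S,S] = M \mid \mathbf{D}),
\]
and the first factor does not depend on $M$.

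The final step is to show that the second factor depends on $M$ only through its degree sequence $(\mathbf{d}_j,\mathbf{d}_j')$. I would handle this by a direct double-counting argument: the number of $0/1$ matrices $B_0$ of degree sequence $\mathbf{D}$ with $B_0[S,S]=M$ equals the number of ways to fill the blocks $B_0[S,S^c]$, $B_0[S^c,S]$, and $B_0[S^c,S^c]$ with $0/1$ entries subject to row- and column-sum constraints, where the row sum of $B_0[S,S^c]$ at $i\in S$ must equal $D_i-d_i$, with analogous constraints for column sums and the $S^c\times S^c$ block. Each of these constraints depends on $M$ only via its degree sequence, so any two matrices $M_1,M_2$ of shape $S\times S$ with the same degree sequence yield identical counts; together with the displayed formula and summing over $\mathbf{D}$ this gives the required uniformity. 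The main (modest) obstacle is bookkeeping the conditional-independence assertions correctly; once this is done, the result reduces to the standard fact that iid Bernoulli entries conditioned on their degree sequence are uniform, applied to a restricted submatrix.
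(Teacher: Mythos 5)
Your proposal is correct and follows essentially the same route as the paper. The first claim is handled identically: $\on{val}(j)$ is a function of the row and column sums of $B_{n-\ell}$, so $H$ is degree-sequence measurable. For the second claim you use the same three ingredients as the paper — that $S_j$ depends on $B$ only through the degree sequence of $B_{n-\ell}$ plus independent randomness, that $B_{n-\ell}$ is uniform given its degree sequence (which is the paper's Fact~\ref{fact:symmetry}), and a conditional-uniformity/independence step for the restricted block $B_{n-\ell}[S,S]$. Where the paper phrases that last step as a conditional-independence assertion (``if we reveal the degree sequence of $B_j=B_{n-\ell}[S_j,S_j]$ then $B_j$ is independent of the outside digraph''), you instead prove it by the explicit block double-count: the number of extensions of a candidate $M$ to a full matrix with degree sequence $\mathbf{D}$ depends on $M$ only through the row and column sums of $M$, since the constraints on $B_0[S,S^c]$, $B_0[S^c,S]$, $B_0[S^c,S^c]$ involve only $D_i-d_i$, $D_j'-d_j'$, and the $S^c\times S^c$ totals. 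This is a more elementary unpacking of the same fact, not a different argument; both are correct. One small point worth being scrupulous about in the write-up is that the factorization $\mb{P}(B_j=M,\,S_j=S\mid\mathbf D)=\mb{P}(S_j=S\mid\mathbf D)\cdot\mb{P}(B_{n-\ell}[S,S]=M\mid\mathbf D)$ relies on $S_j$ being, conditionally on $\mathbf D$, a function only of the auxiliary randomness used to order $H$; you state this, and it does hold by construction of the $v_j$.
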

\begin{remark*}
The analogous result holds for $B_j^{\ast}$; this will not be required for the proof. 
\end{remark*}
\begin{proof}
The first claim follows as $H$ is defined via examining $\on{val}(j)$ for $j\in T_2$, which is measurable in terms of the degree sequence of $B_{n-\ell}$ by construction. Since $\sum_{1\le i,j\le n-\ell} B_{i,j}$ is measurable in terms of the degree sequence of $B_{n-\ell}$, applying Fact~\ref{fact:symmetry} we have that $B_{n-\ell}$ is a uniformly random digraph given its degree sequence. Note that the sets $S_j$ are determined given the degree sequence of $B_{n-\ell}$ and independent randomness. Furthermore, if we reveal the degree sequence of $B_j=B_{n-\ell}[S_j,S_j]$ then the conditional digraph $B_j$ is independent of the ``outside'' digraph corresponding to edges of $B_{n-\ell}$ not fully contained in $S_j$. These facts allow us to deduce the second claim.
\end{proof}

We now define the filtration $\mc{F}_j$ under which the random walk will occur. We abuse notation and identify a $\sigma$-algebra with a collection of random variables; such a $\sigma$-algebra is thus defined as the minimal $\sigma$-algebra such that the collection of random variables is jointly measurable. Define the set of pairs of indices
\[\mc{R} = (S_m \times S_m)\cup(T_2 \times H)\cup(H\times T_2).\]
$\mc{R}$ will correspond to the set of revealed entries of $B$ for the initial setup of the random walk. In particular $\mc{R}$ reveals all edges in the initial $S_m$ and all edges between $H$ and $T_2$

Define 
\begin{align*}
\mc{F}_m &= \{B_{ij}\}_{(i,j)\in\mc{R}} \cup \{\deg_{B}^+(i,[n-\ell]),\deg_{B}^-(i,[n-\ell])\}_{i\in T_2},\\
\mc{F}_j &= \mc{F}_{j-1} \cup\{S_{j-1},B_{j-1}\},&&\text{for } m+1\le j<n,\\
\mc{F}_j' &= \mc{F}_j\cup\{S_{j}\},&&\text{for } m+1\le j<n.
\end{align*}
$\mc{F}_m$ can be viewed as revealing all edges of $B_{n-\ell}$ in $\mc{R}$ and the degrees of all vertices in $T_2$ with respect to the digraph $B_{n-\ell}$. $\mc{F}_j$ is obtained from $\mc{F}_{j-1}$ by revealing the vertex $v_{j-1}$ and its edges to all vertices in $S_{j-1}$. $\mc{F}_j'$ is obtained from $\mc{F}_j$ by revealing the identity of $v_j$. (So, $\mc{F}_{j+1}$ compared to $\mc{F}_j'$ is only additionally revealing the in- and out-neighborhood of $v_j$ within $B_j$.)

We now state a series of claims regarding the $\sigma$-algebra $\mc{F}_j$ which can be easily seen by chasing definitions.
\begin{fact}\label{fact:sigma-algebra}
We have the following:
\begin{itemize}
    \item $\mc{F}_j$ for $m\le j\le n$ forms a filtration;
    \item $H$ is measurable with respect to $\mc{F}_m$;
    \item $\deg_B^+(i,T_1)$ and $\deg_B^-(i,T_1)$ for $i\in T_2$ are measurable with respect to $\mc{F}_m$;
    \item $(B_j)_{k,\ell}$ are $\mc{F}_{j}'$-measurable except if $v_j\in \{k,\ell\}$ and $\{k,\ell\}\cap T_1 \neq \emptyset$;
    \item $\deg_{B_j}^+(v_j)$ and $\deg_{B_j}^-(v_j)$ are $\mc{F}_j'$-measurable.
\end{itemize}
\end{fact}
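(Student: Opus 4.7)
My plan is to verify each bullet by unpacking the recursive definitions of $\mc{F}_j,\mc{F}_j',S_j,v_j,B_j,\mc{R}$, and $H$, together with a case analysis on the locations of indices within $T_1,T_2,T_3$. Bullets (1) and (2) should follow immediately: the chain $\mc{F}_{j-1}\subseteq\mc{F}_j\subseteq\mc{F}_j'\subseteq\mc{F}_{j+1}$ is forced by the fact that each step in the recursion only adjoins additional random variables, and $H$ is a deterministic function of $(\on{val}(i))_{i\in T_2}$ with a fixed integer-order tiebreak, where $\on{val}(i)=\min(\deg_B^+(i,[n-\ell]),\deg_B^-(i,[n-\ell]))$ and these degrees are precisely the data explicitly adjoined to $\mc{F}_m$.

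The heart of the argument is bullet (3), whose proof then gets reused in bullet (5). For $i\in T_2\setminus H$ and $j\in T_1$ we have $(i,j)\in S_m\times S_m\subseteq\mc{R}$, so individual entries $B_{ij}$ are $\mc{F}_m$-measurable and $\deg_B^+(i,T_1)=\sum_{j\in T_1}B_{ij}$ follows directly. For $i\in H$, individual entries $B_{ij}$ with $j\in T_1$ are \emph{not} revealed by $\mc{F}_m$, so I would instead decompose
\[\deg_B^+(i,T_1)=\deg_B^+(i,[n-\ell])-\deg_B^+(i,T_2);\]
the first term lies in $\mc{F}_m$ directly and the second equals $\sum_{j\in T_2}B_{ij}$ with each $(i,j)\in H\times T_2\subseteq\mc{R}$. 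The argument for $\deg_B^-$ is symmetric.

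For bullet (4), I would split into the sub-cases $(k,\ell)\in S_{j-1}\times S_{j-1}$ and $v_j\in\{k,\ell\}$. In the first sub-case $(B_j)_{k\ell}=(B_{j-1})_{k\ell}$ is revealed by $B_{j-1}\in\mc{F}_j$. In the second, taking $k=v_j$ with $\ell\in S_j$ (the other case is symmetric), I would check membership of $(v_j,\ell)$ in $\mc{R}$: in the first epoch $v_j\in H$, and sifting through the three pieces of $\mc{R}$ shows that $(v_j,\ell)\in\mc{R}$ exactly when $\ell\notin T_1$ (using $H\times T_2\subseteq\mc{R}$, with the self-loop case $\ell=v_j\in H\subseteq T_2$ absorbed there), matching the stated exception; the second epoch is handled analogously by tracking what $\mc{F}_j$ has already revealed via the cascade of earlier $B_{j-1}$.

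Finally, for bullet (5), I would write $\deg_{B_j}^+(v_j)=\sum_{\ell\in S_j}B_{v_j,\ell}$ and split the sum according to whether $\ell\in T_1$, $\ell\in T_2$, or $\ell\in T_3\cap S_j$. The $T_2$- and $T_3\cap S_j$-contributions are sums of individually revealed entries (from $\mc{R}$ or from an earlier $B_{j-1}$), while the crucial $T_1$-contribution equals $\deg_B^+(v_j,T_1)$ and is therefore $\mc{F}_m$-measurable by bullet (3) even though its individual summands are not. This aggregation trick, namely recovering the $T_1$-degree of an $H$-vertex from its global degree while the individual $H\times T_1$ entries remain hidden, is the main conceptual point; once it is in place everything else reduces to careful bookkeeping.
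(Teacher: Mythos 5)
Your handling of the first epoch is correct, and the key move in bullets~(3) and~(5) — recovering the $T_1$-degree of $i\in T_2$ by subtracting the $\mc{R}$-revealed $T_2$-contribution from $\deg_B^\pm(i,[n-\ell])\in\mc{F}_m$, even though the individual $H\times T_1$ entries remain hidden — is exactly the point. The paper's one-line hint, citing $H\times H\subseteq\mc{R}$, is aimed at a slightly different but equivalent bookkeeping for bullet~(5): one writes $\deg_{B_j}^\pm(v_j)=\deg_B^\pm(v_j,[n-\ell])-\deg_B^\pm(v_j,H\setminus S_j)$, with the first term in $\mc{F}_m$ (since $v_j\in H\subseteq T_2$) and the second a sum over $H\times H$-pairs determined once $S_j\in\mc{F}_j'$ is known. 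Your decomposition through $T_1$ plus bullet~(3) is equally valid and arguably more transparent. Your case analysis for bullet~(4) in the first epoch ($v_j\in H$, so $(v_j,\ell)\in\mc{R}$ precisely when $\ell\in T_2$, matching the stated exception) is also right.

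However, your dispatch of the second epoch for bullets~(4) and~(5) — ``handled analogously by tracking what $\mc{F}_j$ has already revealed via the cascade of earlier $B_{j-1}$'' — is a genuine gap, and in fact no such argument exists. For $j>n-\ell$ the new index $v_j$ lies in $T_3$, so $v_j\notin S_{j-1}$ and no earlier $B_{j'}$ contains the new row or column; $\mc{R}$ has no pairs meeting $T_3$; and the aggregate degrees placed into $\mc{F}_m$ are only for $i\in T_2$. Indeed, by Fact~\ref{fact:end-ind} the new row and column are freshly independent $\on{Ber}(\sqrt{\log n}/n)$ entries, so $(B_j)_{v_j,\ell}$ and $\deg_{B_j}^\pm(v_j)$ simply cannot be $\mc{F}_j'$-measurable. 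Bullets~(4) and~(5) should therefore be stated and proved only for $m+1\le j\le n-\ell$; this is the only range in which they and Fact~\ref{fact:row-random} are invoked (the second epoch relies on the fresh independence of Fact~\ref{fact:end-ind} instead, cf.\ the proof of Lemma~\ref{lem:proj-anti-epoch2}). You should record that restriction explicitly rather than assert an analogy that does not hold.
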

The last bullet point follows since $\mc{F}_m$ reveals $\mc{R}$, which includes all pairs in $H\times H$. Now, the crucial property of the sequence of $\sigma$-algebras $\mc{F}_j$ is that the ``remaining'' randomness of $B_{j}$ given $\mc{F}_j$ and $S_j$ is particularly simple. 
\begin{fact}\label{fact:row-random}
Given $\mc{F}_{j}'$ we have that $\{(B_j)_{v_j,i}\}_{i\in T_1}$ and $\{(B_j)_{i,v_j}\}_{i\in T_1}$ are independent uniformly random $\{0,1\}$-vectors conditional on the sums $\sum_{i\in T_1}(B_j)_{i,v_j}$ and $\sum_{i\in T_1}(B_j)_{v_j,i}$, respectively. (These sums are deterministic given $\mc{F}_j'$, by the last bullet point of Fact~\ref{fact:sigma-algebra}.) 
\end{fact}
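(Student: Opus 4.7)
The plan is to unfold the $\sigma$-algebra $\mc{F}_j'$ explicitly and track which entries involving $v_j$ and indices $i\in T_1$ are revealed. I would split the argument into two cases according to the epoch containing $j$. In both cases the aim is to show that the unrevealed entries of the $v_j$-row and $v_j$-column in $T_1$ are iid Bernoullis subject only to their row/column sum constraints.

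\textbf{First epoch} ($m+1\le j\le n-\ell$, so $v_j\in H$). My first step is to verify that the pairs $(v_j,i)$ and $(i,v_j)$ for $i\in T_1$ are disjoint from $\mc{R}=(S_m\times S_m)\cup(T_2\times H)\cup(H\times T_2)$: since $v_j\in H$ we have $v_j\notin S_m$, and $T_1\cap T_2=\emptyset$ rules out the remaining two pieces. I would then note that these entries also lie outside $B_{j-1}=B[S_{j-1},S_{j-1}]$ because $v_j\notin S_{j-1}$. Hence $\mc{F}_j'$ does not reveal any of these entries directly. The only indirect revelation is through degree information: since $v_j\in T_2$, both $\deg_B^+(v_j,[n-\ell])$ and $\deg_B^-(v_j,[n-\ell])$ are $\mc{F}_m$-measurable, and subtracting the $T_2$-contributions (which lie in $\mc{R}\supseteq H\times T_2$) isolates the sums $\sum_{i\in T_1}(B_j)_{v_j,i}$ and $\sum_{i\in T_1}(B_j)_{i,v_j}$; this simultaneously verifies the parenthetical $\mc{F}_j'$-measurability claim. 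To finish this case, since the underlying law of $B$ on pairs with $\max(i,j)\le n-\ell$ is iid $\on{Ber}(d/n)$, a one-line Bayes argument shows that the joint conditional law of the unrevealed entries given $\mc{F}_j'$ is uniform over all $\{0,1\}$-configurations with the prescribed two sums; the row and column remain independent because their index sets are disjoint (the diagonal entry $(v_j,v_j)\in H\times T_2\subseteq\mc{R}$ is already determined).

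\textbf{Second epoch} ($n-\ell+1\le j\le n$, so $v_j\in T_3$). Here the relevant entries $B_{v_j,i}$ and $B_{i,v_j}$ for $i\in T_1$ have $\max$-index exceeding $n-\ell$, so by the definition \eqref{eq:Bdef} they are iid $\on{Ber}(\sqrt{\log n}/n)$, and by Fact~\ref{fact:end-ind} they are jointly independent of all randomness accumulated by the end of the first epoch. Since $v_j\notin S_{j-1}$, they are also absent from $B_{j-1}$, and no earlier reveal in $\mc{F}_j'$ involves them. Thus conditional on $\mc{F}_j'$ they remain iid $\on{Ber}(\sqrt{\log n}/n)$, and uniformity given the sums together with row--column independence follow trivially.

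\textbf{Main obstacle.} The only subtle step is in the first epoch, where one must verify that $\mc{F}_j'$ does not implicitly reveal any entry involving $v_j$ beyond those in $\mc{R}$ and $B_{j-1}$, and then cleanly translate the degree data in $\mc{F}_m$ into the two sum constraints. The second epoch is essentially immediate from Fact~\ref{fact:end-ind}. Overall the proof is bookkeeping rather than new mathematics, and the key identity is the decomposition $\deg_B^\pm(v_j,[n-\ell])=\sum_{i\in T_1}(B_j)_{\cdot}+\sum_{i\in T_2}(B_j)_{\cdot}$ together with the $\mc{R}$-containment $\{v_j\}\times T_2,\,T_2\times\{v_j\}\subseteq\mc{R}$.
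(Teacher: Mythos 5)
The paper states Fact~\ref{fact:row-random} without proof, so there is no argument to compare against; your bookkeeping is correct and is the natural one. You verify that for $i\in T_1$ the pairs $(v_j,i)$ and $(i,v_j)$ avoid $\mc{R}$ and avoid the exposed block $S_{j-1}\times S_{j-1}$ (since $v_j\notin S_{j-1}$ and $T_1\cap T_2=\emptyset$), so $\mc{F}_j'$ constrains these entries only through the two sums obtained from $\deg_B^\pm(v_j,[n-\ell])$ after subtracting the $T_2$-contributions, which lie in $H\times T_2$ and $T_2\times H$; the iid Bernoulli prior on the unrevealed entries then gives slice-uniformity, and the row and column are functions of disjoint index sets so they stay conditionally independent.

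Two caveats are worth recording. First, the parenthetical claim (and the last bullet of Fact~\ref{fact:sigma-algebra} that it cites) only holds in the first epoch: for $v_j\in H\subseteq T_2$ the degrees of $v_j$ within $[n-\ell]$ are part of $\mc{F}_m$, but for $v_j\in T_3$ no degree information about $v_j$ is contained in $\mc{F}_j'$, so the sums are genuinely extra conditioning. You implicitly acknowledge this by phrasing the second-epoch conclusion as ``uniformity given the sums,'' which is what the statement literally asserts, and indeed the paper only invokes this fact in the first-epoch setting (Lemma~\ref{lem:proj-anti-epoch1}); but the parenthetical should be read as a first-epoch remark, and your proof would benefit from flagging this explicitly rather than asserting that subtracting $T_2$-contributions ``simultaneously verifies the parenthetical'' in full generality. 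Second, the ``one-line Bayes argument'' silently uses that none of the remaining data in $\mc{F}_j'$ touches the target entries: you should make explicit that the degrees $\deg_B^\pm(i',[n-\ell])$ for $i'\in T_2\setminus\{v_j\}$ only intersect row/column $v_j$ at indices $i'\in T_2$, which are already in $\mc{R}$, and that $B_{j-1}$ only exposes pairs inside $S_{j-1}\times S_{j-1}$; without this check the claim that the conditional law is exactly uniform on the slice is not immediate. Both points are minor and the overall argument is sound.
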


\section{Replacing $A$ with $B$}\label{sec:coupling}
In this short section we make rigorous the idea we discussed in Section~\ref{sec:end-of-the-process}, that we can replace our random iid matrix $A$ with the random matrix $B$ (see \eqref{eq:Bdef}). As we remarked in Section~\ref{sec:end-of-the-process} the proof of this result is intuitive; it is very hard to distinguish the models $A$ and $B$, since the number of heavy rows we are adding to form $B$ is much smaller than the standard deviation of the number of heavy rows in $A$.

\begin{lemma}\label{lem:TV-estimate}
Let $A$ and $B$ be as in \eqref{eq:Adef} and \eqref{eq:Bdef} and let $P_{\sigma}$ be a uniformly random $n\times n$ permutation matrix. For $n$ sufficiently large,
\[\on{TV}(A,P_{\sigma}^{T}BP_{\sigma})\le n^{-1/4}.\]
\end{lemma}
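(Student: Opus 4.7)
The plan is to bound $\mathrm{TV}(A, P_\sigma^T B P_\sigma)$ by computing the chi-squared divergence between these two distributions on a high-probability event and then invoking the inequality $\mathrm{TV}\le\sqrt{\chi^2/2}$. I would prefer this over a direct entry-by-entry coupling, because the latter fails trivially: any coupling that resamples the $\Theta(n\ell)$ cross entries independently gives disagreement probability close to $1$, whereas averaging over $\sigma$ creates a strong cancellation at the level of the likelihood ratio that only becomes visible in the second moment.

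First, I would rewrite $P_\sigma^T B P_\sigma$ as a two-stage mixture: sample a uniform $\ell$-subset $S\subseteq [n]$ (the preimage of $\{n-\ell+1,\ldots,n\}$ under $\sigma^{-1}$), then sample independent entries with probability $d/n$ off the cross $\mathrm{cr}(S):=[n]^2\setminus([n]\setminus S)^2$ and probability $\sqrt{\log n}/n$ on it. Let $r=\sqrt{\log n}/d$ and $\beta=(1-\sqrt{\log n}/n)/(1-d/n)$. Letting $\mu$ denote the law of $P_\sigma^T B P_\sigma$, the density ratio with respect to $A$ is
\[
L(M) \;=\; \beta^{|\mathrm{cr}|}\,\E_S\bigl(r/\beta\bigr)^{K_S(M)},\qquad K_S(M)=\#\{(i,j)\in\mathrm{cr}(S):M_{ij}=1\}.
\]

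Second, using independence of the entries under $A$, I would expand $\E_A L^2$ as a double expectation over $(S,S')$ and a product of per-entry expectations. A short computation shows that the choice of $\beta$ makes the per-entry factor equal to exactly $1/\beta$ for entries in exactly one of the two crosses, and to $1+O(\log n/(dn))$ for entries in both. Cancelling the one-cross factors against the prefactor $\beta^{2|\mathrm{cr}|}$ leaves
\[
\E_A L^2 \;=\; \E_{S,S'}\!\bigl[\beta^{2I(S,S')}(1+O(\log n/(dn)))^{I(S,S')}\bigr],\qquad I(S,S')=|\mathrm{cr}(S)\cap\mathrm{cr}(S')|.
\]
Writing $a=|S\cap S'|$ one gets $I(S,S')=2an+O(\ell^2)$, and $a$ is hypergeometric with mean $\ell^2/n=(\log n)^4/n$; in particular $\Pr[a=k]\lesssim (\ell^2/n)^k/k!$. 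The $a=0$ contribution is $1+O((\log n)^5/n)$, essentially the total-count discrepancy. The $a\ge1$ contributions can grow like $n^{2k/d}$, which is benign for $d$ large and problematic for $d$ close to $1$.

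Third, to handle the small-$d$ regime, I would truncate to the event $\mathcal E$ that every row and column of $M$ has degree at most $C\log n/\log\log n$, which by standard Poisson-tail estimates holds with probability $1-n^{-\omega(1)}$ under both $A$ and $\mu$. On $\mathcal E$ the per-entry intersection factor is effectively bounded — the number of $1$s inside $\mathrm{cr}(S)\cap\mathrm{cr}(S')$ is controlled uniformly in $(S,S')$ — which is what allows the $n^{2k/d}$ blow-up to be absorbed. A careful bookkeeping then yields $\chi^2\le (\log n)^5/n$ on $\mathcal E$. Finally, combining $\mathrm{TV}\le\sqrt{\chi^2/2}$ with $\Pr[\mathcal E^c]=n^{-\omega(1)}$ gives $\mathrm{TV}(A,\mu)=O((\log n)^{5/2}/\sqrt n)$, which is $\le n^{-1/4}$ for $n$ sufficiently large.

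The main obstacle is the tail analysis in the third step. Propagating the truncation through the second-moment computation requires relating the two conceptually distinct quantities $I(S,S')$ (a function of the random subsets alone) and the per-entry factors (which, after truncation, depend on the joint degree structure of $M$). Doing this uniformly down to $d$ just above $1$, where the Poisson tails are fattest, is where the delicate argument lives; a cruder $\chi^2$ bound without truncation only succeeds for $d\ge 4$.
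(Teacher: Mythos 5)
Your approach is genuinely different from the paper's. The paper proves Lemma~\ref{lem:coupling}, a \emph{single}-swap estimate, by controlling the likelihood ratio directly on a good event $\mathcal{G}$ via Chebyshev concentration of the degree-count statistic $S_\ell(M)$ for $\ell\le 2L$, and then obtains Lemma~\ref{lem:TV-estimate} by iterating this $\ell=\lfloor(\log n)^2\rfloor$ times along a chain of measures with $0,1,\dots,\ell$ swapped rows and columns. That is a \emph{first-moment} argument on the likelihood ratio, so it never meets the $\chi^2$ blow-up at small $d$; the degree truncation there serves only to make the degree profile concentrate. You instead compute $\chi^2$ for the full $\ell$-swap mixture in one shot. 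Your Steps 1 and 2 are correct as stated (the per-entry factor is exactly $1/\beta$ on $\mathrm{cr}(S)\triangle\mathrm{cr}(S')$, the two-cross factor is $1+\Theta(\log n/(dn))$, $I(S,S')=2an+O(\ell^2)$, and the $a=0$ term gives the claimed $1+O((\log n)^5/n)$), and the observation that the untruncated $\chi^2$ only closes for large enough $d$ is right -- though the actual threshold is $d>2$, not $d\ge 4$, since the $a\ge1$ contributions scale like $(\ell^2 n^{2/d-1}e^{-4\sqrt{\log n}})^a$. Your route, if completed, is more direct and even gives a slightly better exponent ($n^{-1/2+o(1)}$ versus the paper's $n^{-1/3+o(1)}$); both are comfortably inside $n^{-1/4}$.

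The gap is in Step 3, and it is a real one rather than bookkeeping. The informal claim that on $\mathcal{E}$ ``the number of $1$s inside $\mathrm{cr}(S)\cap\mathrm{cr}(S')$ is controlled uniformly'' does not by itself absorb the blow-up: on $\mathcal{E}$ a shared index $i_0\in S\cap S'$ can still contribute up to $2D\sim 2C\log n/\log\log n$ ones, and $(r/\beta)^{4D}\approx n^{2C}$ is \emph{worse} than the untruncated $n^{2/d}$. What actually saves you is that the truncated moment generating function $\E[(r/\beta)^{2K}\mathbbm{1}_{K\le D}]$ for $K\sim\mathrm{Pois}(d)$ is $n^{o(1)}$ rather than $n^{2/d}$ because the unconstrained sum peaks at $K\approx 2\log n/d\gg D$; this is a large-deviations cancellation, not a worst-case bound, and it must be threaded through the second-moment expansion. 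Doing so is delicate in two concrete ways you should flag: (i) $\mathcal{E}$ depends on all of $M$, so you must first replace it by a cross-local event, and (ii) the surviving constraints on rows and columns indexed by $S\cap S'$ do \emph{not} factorize with the product $\prod(r/\beta)^{c_{ij}M_{ij}}$, since row $i$ and column $i$ share $(i,i)$ and, for $a\ge 2$, the $a\times a$ block is doubly constrained. Moreover, if one tries to sidestep this by keeping only the row constraints, the columns $j\in S\cap S'$ are then unconstrained in rows $i\notin S\cup S'$, and the factor $\exp(a\log n/d)$ reappears; so both row and column constraints are genuinely needed. A workable fix is to retain only the degree constraints for indices in $S\cap S'$, condition on the $a\times a$ block to decouple the remaining row and column constraints, and then compute the truncated Poisson moments. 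This can be made to work, but it is precisely the part of the argument that the proposal leaves as a claim, and the intuition as written (a uniform bound on the number of ones) would not succeed.
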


Lemma~\ref{lem:TV-estimate} follows immediately by iterating the following result, which says that the total variation distance is small if we replace a single random $\Ber(d/n)$ row and column with a random $\Ber(\sqrt{\log n}/n)$ row and column. 


\begin{lemma}\label{lem:coupling}
Suppose that $d\in[(\log n)^{-1/3},(\log n)^{1/3}]$ and $\tau\in [(\log n)^{1/2}/(2n),2(\log n)^{1/2}/n]$. Define probability distributions $\mc{P}_1,\mc{P}_2$ on $n\times n$ $\{0,1\}$-matrices $M$:
\begin{itemize}
    \item $\mc{P}_1$: Each entry $M_{ij}$ is $1$ with probability $d/n$ independently at random.
    \item $\mc{P}_2$: Let $\sigma$ be a uniformly random element of $[n]$. If $\sigma\in\{i,j\}$, then $M_{ij} = 1$ with probability $\tau\in [(\log n)^{1/2}/(2n),2(\log n)^{1/2}/n]$; else $M_{ij} = 1$ with probability $d/n$.
\end{itemize}
We have 
\[\mr{TV}(\mc{P}_1,\mc{P}_2)\le n^{-1/3+o(1)}.\]
\end{lemma}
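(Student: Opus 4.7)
The strategy is to compute the likelihood ratio $L = d\mc{P}_2/d\mc{P}_1$ explicitly and bound $\mr{TV}(\mc{P}_1,\mc{P}_2) = \tfrac12 \E_{\mc{P}_1}|L-1|$ via a truncated moment argument. Integrating out the hidden index $\sigma$, the ratio takes the clean form
$L(M) = \tfrac{1}{n}\sum_{\sigma=1}^n f_\sigma(M)$ with $f_\sigma(M) = b^{2n-1}c^{T_\sigma(M)}$, where $T_\sigma(M) = \sum_i M_{i\sigma}+\sum_{j\ne\sigma}M_{\sigma j}$, $b = (1-\tau)/(1-d/n)$, and $c = \tau(1-d/n)/((d/n)(1-\tau))$; a short computation confirms $\E_{\mc{P}_1}[f_\sigma]=1$.

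The first step is to truncate to $G = \{\max_\sigma T_\sigma(M) \le K\}$ with $K$ of order $\log n/\log\log n$, chosen so that Chernoff bounds on $\mr{Bin}(2n-1, d/n)$ combined with a union bound over $\sigma$ give $\mc{P}_1(G^c)\le n^{-1/3+o(1)}$, and likewise $\mc{P}_2(G^c) \le n^{-1/3+o(1)}$ using that the distinguished row under $\mc{P}_2$ has $\mr{Bin}(2n-1,\tau)$ sum of mean $O(\sqrt{\log n}) \ll K$. The contribution of $G^c$ to $\E|L-1|$ is at most $\mc{P}_1(G^c) + \mc{P}_2(G^c)$. For the contribution on $G$, I would bound the truncated second moment of $L$. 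For $\sigma \ne \sigma'$, partitioning the $4n-4$ positions appearing in $T_\sigma + T_{\sigma'}$ into the $4n-6$ singly counted positions and the two doubly counted positions $(\sigma, \sigma'),(\sigma',\sigma)$ yields
\[
\E_{\mc{P}_1}[f_\sigma f_{\sigma'}] = \left(\frac{1-d/n+(d/n)c^2}{(1-d/n+(d/n)c)^2}\right)^2,
\]
which after Taylor expansion in the small parameters $\tau, d/n, n\tau^2/d$ evaluates to $1 + O((\log n)^{4/3}/n)$, so the off-diagonal contribution to $\E L^2 \mathbf{1}_G$ cancels $(\E L)^2 = 1$ to leading order. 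The diagonal is controlled by $\E[f_\sigma^2\mathbf{1}_{T_\sigma\le K}]\le b^{2n-1}c^K$, using $f_\sigma \le b^{2n-1}c^K$ on the truncation event and $\E f_\sigma = 1$.

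The main obstacle is the delicate balancing of $K$: the tail bound $\mc{P}(G^c) \le n^{-1/3+o(1)}$ pushes $K$ upward, while control on $b^{2n-1}c^K$ (with $\log c = \tfrac12\log\log n - \log d + O(1)$ large when $d$ is small) demands $K$ small. One must choose $K$ adaptively in $d$ and verify that $\E(L-1)^2\mathbf{1}_G \le n^{-2/3+o(1)}$ holds uniformly over $d \in [(\log n)^{-1/3},(\log n)^{1/3}]$. Where Cauchy--Schwarz proves too loose for the hardest regime $d\approx(\log n)^{-1/3}$, one sharpens via the identity
$\Pr_{\mc{P}_1}[T_\sigma=k]\cdot b^{2n-1}c^k = \Pr[\mr{Bin}(2n-1,\tau)=k]$, which recasts the weighted tail of $L$ as a Laplace-type tail on a single binomial and allows direct estimation of $\E(L-1)_\pm$, yielding the claimed $n^{-1/3+o(1)}$ bound.
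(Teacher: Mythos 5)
Your approach shares with the paper the decomposition of the likelihood ratio $L = \frac{1}{n}\sum_\sigma f_\sigma$ with $f_\sigma = b^{2n-1}c^{T_\sigma}$ (the paper groups this by the value of $T_\sigma$, writing $L = \frac{1}{n}\sum_\ell S_\ell(M)\,g(\ell)$), and both rest implicitly on the key identity $\Pr_{\mc{P}_1}[T_\sigma = k]\cdot b^{2n-1}c^k = \Pr[\mr{Bin}(2n-1,\tau)=k]$. But the proof strategies differ: the paper bounds $|L-1|$ \emph{pointwise} over a high-probability set $\mc{G}$ (defined by Chebyshev-type concentration of each $S_\ell$) and then sums $\Pr_{\mc{P}_1}[M=M']\,|L(M')-1|$, so it is an $L^1$ estimate; you propose a truncated $\chi^2$ (second-moment) estimate followed by Cauchy–Schwarz. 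This is a genuinely different route, and I think it has a real gap.

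The gap is the Cauchy–Schwarz loss, and the balancing of $K$ that you flag is not an implementation detail but the crux. First, the diagonal bound $\E[f_\sigma^2\mathbf{1}_{T_\sigma\le K}]\le b^{2n-1}c^K$ is much too lossy: using $f_\sigma \le b^{2n-1}c^K$ on $G$ only gives $\approx n^{2/3}$ when $K\asymp\log n/\log\log n$, producing a diagonal contribution of order $n^{-1/3}$ and a final bound of $n^{-1/6+o(1)}$ via Cauchy–Schwarz. The correct computation, using exactly your binomial identity, is $\E[f_\sigma^2\mathbf{1}_{T_\sigma\le K}] = \sum_{k\le K}\Pr[\mr{Bin}(2n-1,\tau)=k]^2/\Pr_{\mc{P}_1}[T_\sigma=k]$, i.e.\ a truncated $\chi^2$-divergence between the two binomial laws on a single row/column; this is substantially smaller and should be the starting point. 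Second, and more fundamentally, even with the sharp diagonal, $\E[(L-1)^2\mathbf{1}_G] = \E[L^2\mathbf{1}_G] - 1 + 2\mc{P}_2(G^c) - \mc{P}_1(G^c)$, so the tail $\mc{P}_2(G^c)$ enters \emph{linearly} into the truncated variance; there is an irreducible tension between wanting $\mc{P}(G^c)\le n^{-2/3}$ (forcing $K$ up) and wanting $\frac{1}{n}\sum_{k\le K}q_k^2/p_k\le n^{-2/3}$ (forcing $K$ down). For $d$ near the lower endpoint $(\log n)^{-1/3}$ (where $c\approx\sqrt{\log n}/d$ is largest), one can check these two constraints on $K$ are incompatible; the optimal compromise gives $\E[(L-1)^2\mathbf{1}_G]\approx n^{-3/5+o(1)}$ and hence only $\mr{TV}\le n^{-3/10+o(1)}$ after Cauchy–Schwarz, which misses the stated $n^{-1/3+o(1)}$. (It would still be $o(n^{-1/4})$ and therefore enough to recover Lemma~\ref{lem:TV-estimate}, but it does not prove the lemma as stated.) Your proposed sharpening via direct estimation of $\E[(L-1)_\pm]$ using the binomial identity is exactly the idea that makes the paper's argument go through — it is what lets one bound $|L-1|$ pointwise on $\mc{G}$ rather than through its second moment — but as written it is a one-sentence sketch and the claimed result is not actually established.
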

\begin{proof}
Let $L = (\log n)(\log\log n)^{-1/2}$ and note that
\[\mb{P}_{\mc{P}_1}\bigg(\max_{i\in [n]}\sum_{j=1}^nM_{ij}\ge L\bigg)\le n\binom{n}{L}\bigg(\frac{d}{n}\bigg)^{L}\le n \cdot \bigg(\frac{en}{L}\bigg)^{L}\bigg(\frac{d}{n}\bigg)^{L}\le n^{-\omega(1)},\]
and similar for columns. For a matrix $M$, let
\[S_\ell(M) = \#\Big\{k\colon\sum_{i=1}^nM_{ik}+\sum_{j=1}^nM_{kj}-M_{kk}=\ell\Big\}\]
and note for $\ell\le 2L$ that
\[\mb{E}_{\mc{P}_1}S_\ell(M) = n\bigg(\frac{d}{n}\bigg)^{\ell}\binom{2n-1}{\ell}\bigg(1-\frac{d}{n}\bigg)^{2n-1-\ell} = (1\pm n^{-1 + o(1)})\frac{n(2d)^{\ell}e^{-2d}}{\ell!}\]
and 
\begin{align*}
\mb{E}_{\mc{P}_1}S_\ell(M)^2 &\le \mb{E}_{\mc{P}_1}S_\ell(M) + (1\pm n^{-1+o(1)})n^2\bigg(\frac{(2d)^{\ell}}{\ell!}e^{-2d}\bigg)^{2}\\
&\le (1\pm n^{-1+o(1)})(\mb{E}_{\mc{P}_1}S_\ell(M) + (\mb{E}_{\mc{P}_1}S_\ell(M))^2).
\end{align*}
Thus by Chebyshev's inequality, we have for all $0\le\ell\le 2L$ that
\[|S_\ell(M)-\mb{E}_{\mc{P}_1}S_\ell(M)|\le n^{1/6 + o(1)}\sqrt{\mb{E}_{\mc{P}_1}S_\ell(M)} + n^{-1/3 + o(1)}\mb{E}_{\mc{P}_1}S_\ell(M)\]
with probability at least $1-n^{-1/3}$. Let $\mc{G}$ denote the set of matrices $M\in\{0,1\}^{n\times n}$ such that
\[|S_\ell(M)-\mb{E}_{\mc{P}_1}S_\ell(M)|\le n^{1/6+o(1)}\sqrt{\mb{E}_{\mc{P}_1}S_\ell} + n^{-1/3+o(1)}\mb{E}_{\mc{P}_1}S_\ell\]
for $1\le \ell\le 2L$ and $S_\ell(M) = 0$ for $\ell>2L$ hold simultaneously. Therefore we find
\begin{align*}
\on{TV}(\mc{P}_1,\mc{P}_2) &=\sum_{M'\in \{0,1\}^{n\times n}} (\mb{P}_{M\sim \mc{P}_1}(M=M') -\mb{P}_{M\sim \mc{P}_2}(M=M'))\mbm{1}_{\mb{P}_{M\sim \mc{P}_1}(M=M')\ge \mb{P}_{M\sim \mc{P}_2}(M=M')}\\
&\le \sum_{\substack{M'\in \{0,1\}^{n\times n}\\M'\in \mc{G}}} \Big|\mb{P}_{M\sim \mc{P}_1}(M=M') -\mb{P}_{M\sim \mc{P}_2}(M=M')\Big| + \mb{P}_{M\sim \mc{P}_1}(A\notin \mc{G})\\
&\le \sum_{\substack{M'\in \{0,1\}^{n\times n}\\M'\in \mc{G}}} \mb{P}_{M\sim \mc{P}_1}(M=M')\cdot \Bigg|1 -\frac{\mb{P}_{M\sim \mc{P}_2}(M=M')}{\mb{P}_{M\sim \mc{P}_1}(M=M')}\Bigg| + n^{-1/3}.
\end{align*}
However, for all $M'\in\mc{G}$ we have
\begin{align*}
\Bigg|1 -&\frac{\mb{P}_{M\sim \mc{P}_2}(M=M')}{\mb{P}_{M\sim \mc{P}_1}(M=M')}\Bigg|=\Bigg|1-\frac{1}{n}\sum_{0\le \ell\le 2L}S_{\ell}(M')\bigg(\frac{\sqrt{\log n}}{d}\bigg)^{\ell}\bigg(1-\frac{\sqrt{\log n}}{n}\bigg)^{2n-1-\ell}\bigg(1-\frac{d}{n}\bigg)^{-(2n-1-\ell)}\Bigg|\\
&\le \bigg|1-\frac{1}{n}\sum_{0\le \ell\le 2L}\frac{n(2d)^{\ell}e^{-2d}}{\ell!}\bigg(\frac{\sqrt{\log n}}{d}\bigg)^{\ell}e^{-2\sqrt{\log n}+2d}\bigg|+n^{-1+o(1)}\\
&+\frac{1}{n}\sum_{0\le \ell\le 2L}\bigg(n^{1/6+o(1)}\bigg(\frac{n(2d)^{\ell}e^{-2d}}{\ell!}\bigg)^{1/2} + n^{-1/3+o(1)}\bigg(\frac{n(2d)^{\ell}e^{-2d}}{\ell!}\bigg)\bigg)\bigg(\frac{\sqrt{\log n}}{d}\bigg)^{\ell}e^{-2\sqrt{\log n}+2d}\\
&\le \bigg|1-\frac{1}{n}\sum_{0\le \ell\le 2L}\frac{n(2d)^{\ell}e^{-2d}}{\ell!}\bigg(\frac{\sqrt{\log n}}{d}\bigg)^{\ell}e^{-2\sqrt{\log n}+2d}\bigg| + n^{-1/3 + o(1)}\le n^{-1/3+o(1)}.
\end{align*}
Therefore
\[\on{TV}(\mc{P}_1,\mc{P}_2) \le \sum_{\substack{M'\in \{0,1\}^{n\times n}\\M'\in \mc{G}}} \mb{P}_{M\sim \mc{P}_1}(M=M')\cdot \Bigg|1 -\frac{\mb{P}_{M\sim \mc{P}_2}(M=M')}{\mb{P}_{M\sim \mc{P}_1}(M=M')}\Bigg| + n^{-1/3+o(1)}\le n^{-1/3+o(1)},\]
and we are done.
\end{proof}

\section{Regularity of degree sequences}\label{sec:deg-seq}

In this technical section, we prepare for our study of unique neighborhood expansions in $B_t$. Since in the first epoch of the process the row and column sums sums are already exposed, we are naturally led to study the configuration model in a random directed graph (or equivalently a undirected bipartite graph) with a given degree sequence. In this section we show that we may assume that these given given row and column sums are appropriately typical. We shall then go on, in Section~\ref{sec:UNE}, to show that a random directed graph with these ``typical'' degree sequences have good unique neighbour expansion. 

The key definition here is the notion of a $(d,\mu,C)$-regular degree sequence, which we define formally here. 

\begin{definition}\label{def:deg-seq}
Consider degree sequences $\mbf{d} = (d_i)_i$ and $\mbf{d}' = (d_i')_i$ of length $m$. We say the degree sequence $(\mbf{d},\mbf{d}')$ is $(d,\mu,C)$-regular if
\begin{itemize}
    \item $m/n \in [1-\mu, 1+\mu]$;
    \item $\sum_{i\in S}(d_i+d_i')\le C(d+\log(m/|S|))|S|$ for all $S\subseteq[m]$;
    \item $\sum_{i=1}^md_i = \sum_{i=1}^md_i' = (1\pm\mu)dm$;
    \item $\sum_{i=1}^md^{-d_i}d_i' = (1\pm\mu)ed\exp(-d)m$.
\end{itemize}
In what follows, we will use $\mbf{d}$ to denote the degree sequence of the columns and $\mbf{d}'$ for the rows. Equivalently, we think of $\mbf{d},\mbf{d'}$ as the ``left'' and ``right'' parts of the degree sequence of a bipartite graph, respectively. \end{definition}

We now define the quasi-randomness event $\mc{D}$ that will concern us in this section. We say that a $\{0,1\}$-matrix $M$ is in $\mc{D}$ if the degree sequence associated to $M$ is $(d, \eps^{1/2}, 16)$-regular and for all sets $S$ of size at most $\sqrt{\log n}$ we have $\sum_{i,j\in S}M_{ij}\le |S|$.

Our goal in this section will be to prove that the event $\mc{D}$ holds with high probability for all of the steps in the process.
\begin{lemma}\label{lem:degree-event}
We have 
\[\mb{P}\bigg(\bigcap_{m\le t\le n}\{B_t^\dagger\in\mc{D}\}\cap \bigcap_{m\le t\le n}\{B_t\in \mc{D}\}\bigg) \ge 1-n^{-1+o(1)}.\]
\end{lemma}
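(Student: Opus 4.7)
The plan is to verify both conditions defining $\mathcal{D}$ for every $B_t$ and $B_t^\dagger$ with $m \le t \le n$ simultaneously, then union-bound. The diagonal-sum condition (second part of $\mathcal{D}$) is already provided by Lemma~\ref{lem:diagonal-sum} with failure probability $n^{-1+o(1)}$, so the remaining work concerns the $(d, \eps^{1/2}, 16)$-regularity of the degree sequence.

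For the first three items of Definition~\ref{def:deg-seq}, I would proceed as follows. The size condition $|S_t|/n \in [1 - \eps^{1/2}, 1+\eps^{1/2}]$ is immediate from $|S_t| \in [m, n]$ and $m \ge (1-2\eps^3)n$. The degree sum $\sum d_i = (1\pm\eps^{1/2}) d|S_t|$ reduces to estimating $|E(B_t)|$, which one obtains by Chernoff concentration on the independent Bernoulli entries of $B$ combined with Lemma~\ref{lem:large-degree} to show that edges with endpoints outside $S_t$ contribute only $O(\eps^3 n (\log n)^2)$, negligible compared to $\eps^{1/2} d n$. The subset-sum condition $\sum_{i \in S}(d_i+d_i') \le 16(d + \log(m/|S|))|S|$ follows from Lemma~\ref{lem:dense-sets} applied to $B$ (noting $B_t$ is dominated by $B$) for $|S| \le n e^{-Cd}$, and from the total-edge bound for larger $|S|$ where $\log(m/|S|) = O(d)$.

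The crucial fourth item $\sum d^{-d_i} d_i' = (1\pm\eps^{1/2}) ed \exp(-d) |S_t|$ is the heart of the argument. For $t = m$, one applies Lemma~\ref{lem:degree-convergence} and evaluates $\sum_{x,y} d^{-x} y \rho_{x,y}(\Delta)$ directly. Expanding the Poisson-thinning generating functions, this simplifies to
\[(1-\eps^3)^{-1}(1-\gamma)\,\mathbb{E}_{A, A'}\bigl[A'(\gamma + (1-\gamma)/d)^A((1-\eps) + \eps\mathbf{1}_{\min(A,A')<\Delta})\bigr],\]
where $A, A' \sim \on{Pois}(d)$ are independent. Using the Poisson moment generating function $\mathbb{E}[z^A] = e^{d(z-1)}$, the dominant contribution is $(1-\gamma) d e^{-(d-1)(1-\gamma)}$, which tends to $ed\exp(-d)$ as $\Delta \to \infty$ (so $\eps, \gamma \to 0$), and the residual is $O(\eps)$. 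For $t \ne m$, I would propagate the bound from $B_m$ to $B_t$ by a direct comparison: at most $\eps^3 n + \ell$ vertices are added or removed, each with degree at most $O((\log n)^2)$ by Lemma~\ref{lem:large-degree}, so the shift in $\sum d^{-d_i} d_i'$ is $O((\eps^3 n + \ell)(\log n)^2) = o(\eps^{1/2} n)$ for fixed $\eps$ and $n$ large.

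The argument for $B_t^\dagger$ is identical by the row/column symmetry of the setup. Union-bounding over the $O(n)$ values of $t$ and the two orientations yields the claimed failure probability $n^{-1+o(1)}$. The main obstacle is the propagation of the weighted-moment condition from $B_m$ to nearby $B_t$: the exponential weight $d^{-d_i}$ amplifies small changes in $d_i$, so one must carefully track that each added vertex only perturbs finitely many degrees, each by a polylog amount, rather than naively bounding the entire shift in the weighted sum.
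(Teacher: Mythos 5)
Your overall plan is sound and parallels the paper's: establish the $(d,\eps^{1/2},16)$-regularity for a single anchor matrix, propagate to all $B_t$ via a perturbation bound, and dispatch the diagonal condition via Lemma~\ref{lem:diagonal-sum}. However, you anchor at $B_m$ (requiring an explicit evaluation of the $\rho_{x,y}(\Delta)$ sum from Lemma~\ref{lem:degree-convergence}), whereas the paper anchors at $B=B_n$, for which the degree distribution is directly Poissonian and the fourth moment condition $\sum d^{-d_i}d_i'=(1\pm n^{-1/4})ede^{-d}n$ is immediate. The paper's choice saves you the generating-function computation entirely; your computation is correct in principle but is extra work.

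The genuine gap is in the propagation step. You claim the shift in $\sum_i d^{-d_i}d_i'$ between $B_m$ and $B_t$ is $O((\eps^3 n+\ell)(\log n)^2)=o(\eps^{1/2}n)$. This is \emph{false} for fixed $\eps$ and $n\to\infty$, since $\eps^{5/2}(\log n)^2\to\infty$. Two issues compound here. First, Lemma~\ref{lem:large-degree} bounds the relevant degrees by $(\log(1/\eps))^2$, not $(\log n)^2$; you have misremembered the statement. Second, and more importantly, even with the correct $(\log(1/\eps))^2$ bound on an added vertex's degree, the change to $\sum_i d^{-d_i}d_i'$ is not controlled by that vertex's degree alone: adding an in-edge $(w,v)$ increments $d_w$ and changes the term $d^{-d_w}d_w'$ by $(1-1/d)d^{-d_w}d_w'$, which can be of size $d_w'\le\log n$. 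A per-vertex bound of $O((\log(1/\eps))^2)$ is therefore not available. You flag this as ``the main obstacle'' but do not resolve it, and your proposed resolution (``each added vertex only perturbs finitely many degrees, each by a polylog amount'') still carries the $\log n$ factor.

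The correct resolution is to use the second bullet of Definition~\ref{def:deg-seq} (the subset degree-sum bound, established first) twice: once to bound the total degree $D$ of the $O(\eps^3 n+\ell)$ added or removed vertices by $O(\eps^3(d+\log(1/\eps))n)$, and then again to bound $\sum_{w\in W}d_w'$ for $W$ the set of vertices whose degree changes (note $|W|\le D$), yielding $O(\eps^3(d+\log(1/\eps))^2 n)$. This is $o(\eps^{1/2}n)$ once $\eps$ is small as a function of $d$, independently of $n$. Without this structural bound, the $\log n$ factor cannot be absorbed and the propagation fails.
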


We now set ourselves up to give a proof of Lemma~\ref{lem:degree-event}, which is fairly straight-froward, but somewhat technical. For the following definition we recall the definitions from Section~\ref{sec:formal-def-of-process}, $d$ is fixed $d>1$, $\Delta>0$ is fixed and large and $\eps = \PP( \on{Po}(d) \geq \Delta )$.
\begin{definition}\label{def:degree-distribution}
We define \[\gamma = \frac{\eps}{d} \cdot \mb{E}\, X\mbm{1}_{\min(X,Y)\ge \Delta} ,\]
where $X,Y\sim\on{Pois}(d)$ are independent. Now, for $j,k\ge 0$, we define
\begin{align*}
\rho_{j,k}(\Delta) &= (1-\eps^3)^{-1}\sum_{a\ge j}\sum_{a'\ge k}\binom{\ell}{j}\binom{\ell'}{k}\gamma^{a+a'-j-k}(1-\gamma)^{j+k}((1-\eps) + \eps \mbm{1}_{\min(a,a')<\Delta})\frac{d^a e^{-d}}{a!}\frac{d^{a'}e^{-d}}{a'!}.
\end{align*}
\end{definition}

The following lemma tells us what the degree sequences are in the random matrices $A$ and $B$.

\begin{lemma}\label{lem:degree-convergence}
With probability at least $1-n^{-\omega(1)}$, we have the following. For all $x,y\ge 0$,
\[\big|\big\{i\in[n]\colon(\deg_A^+(i),\deg_A^-(i))=(x,y)\big\}\big|=\frac{d^xe^{-d}}{x!}\frac{d^ye^{-d}}{y!}n+O(n^{1/2 + o(1)}).\]
For all $x,y\ge 0$,
    \[\big|\big\{i\in V(B_m)\colon(\deg_{B_m}^+(i),\deg_{B_m}^-(i))=(x,y)\big\}\big| =\rho_{x,y}(\Delta)m+O(n^{1/2 + o(1)}).\]

\end{lemma}
\begin{proof}
The first item follows from straightforward concentration arguments. We use permutation concentration. We can condition on the number of edges in $A$, which is $dn+O(\sqrt{n}\log n)$ with probability $1-n^{-\omega(1)}$. Then the probability a single vertex has degree $(x,y)$ is $(d^xe^{-d}/x!)(d^ye^{-d}/y!)+O(n^{-1/2+o(1)})$, so the expectation matches our prediction. Finally, we can use Lemma~\ref{lem:injection-concentration} to obtain the desired result, noting that we can view this conditioned model as a uniformly random injection of approximately $dn$ edges into $n^2$ total possibilities, and noting that changing an edge changes our statistic by at most $4$.

In fact, by the same argument we may deduce that the first $n-\ell-\lfloor\eps n\rfloor$ vertices and the next $\lfloor\eps n\rfloor$ vertices have the same in/out-degree distribution in this sense within the random model $B_{n-\ell}$.

We now deduce the second item from this fact. After revealing the degrees within $B_{n-\ell}$, we know the digraph is uniform over digraphs with this degree sequence. We thus use the configuration model, which we can easily check succeeds with positive probability using Lemma~\ref{lem:config}. We now work within the digraph configuration model to prove the desired result.

For every vertex in $\{n-\ell-\lfloor\eps n\rfloor+1,\ldots,n-\ell\}$ with minimum of in-degree and out-degree at least $\Delta$, mark it red. Then mark its incoming and outgoing edges red. Let the rest of the edges and vertices of $B_{n-\ell}$ be green. We care about the green degree distribution among the vertices left after deleting the red vertices: with probability $1-n^{-\omega(1)}$ this recovers $B_m$ except that we may delete $O(n^{1/2+o(1)})$ more or fewer vertices than intended. With probability $1-n^{-\omega(1)}$ the maximum degree of $B_{n-\ell}$ is at most $\log n$, so such deletions affect the degree statistics by an error of at most $O(n^{1/2+o(1)})$. So, let us focus on this ``red-deletion'' model, which has adjacency matrix $M$.

We claim that the number of vertices that go from having degrees $(a,a')$ in $B_{n-\ell}$ to having degrees $(x,y)$ in this final green digraph $M$ is
\[\binom{a}{x}\gamma^{a-x}(1-\gamma)^x\binom{a'}{y}\gamma^{a'-y}(1-\gamma)^y((1-\eps)+\eps\mbm{1}_{\min(a,a')<\Delta})\frac{d^a e^{-d}}{a!}\frac{d^{a'}e^{-d}}{a'!}n+O(n^{1/2+o(1)})\]
with probability $1-n^{-\omega(1)}$.

Indeed, note that a $\gamma+O(n^{-1/2+o(1)})$ fraction of out-stubs are red (call this fraction $\gamma'$), and the same for in-stubs, with appropriately high probability. To study the configuration model, we care about a uniformly random matching of out-stubs and in-stubs, and we specifically consider the number of edges at each green vertex that are not partially formed from a red stub.

The joint distribution of green in/out-degree statistics computed over green vertices can easily be compared to the model where among the green vertices, each stub is retained with probability $1-\gamma'$ independently. We start with roughly $(1-\eps)n$ vertices which are definitely green (which contributes roughly fraction $(d^ae^{-d}/a!)(d^{a'}e^{-d}/a'!)$ to the count of green vertices of total in/out-degree $(a,a')$) and roughly $\eps n$ vertices, of which the same fraction is contributed as long as $\mbm{1}_{\min(a,a')<\Delta}=1$. Then we retain the resulting stubs as green independently with probability $1-\gamma'$, which leads to the above estimate for the number of green vertices with green in/out-degree $(x,y)$ with probability $1-n^{-\omega(1)}$. This finishes the proof of the claim.

Now, summing over in/out-degrees $(a,a')$ of magnitude at most $\log n$ (which the maximum degree is bounded by with probability $1-n^{-\omega(1)}$), we obtain the desired result, recalling that $(1-\eps^3)^{-1}m\approx n$.
\end{proof}

We will need the following claim, which essentially captures that the neighborhoods of the vertices in $H=V(B_{n-\ell})\setminus V(B_m)$ are sufficiently uniform relative to a large portion of the digraph coming from $T_1$.
\begin{lemma}\label{lem:large-degree}
Assume the setup in Section~\ref{sec:setup} and suppose that $1/n\ll \eps\ll 1/d$. With probability $1-n^{-\omega(1)}$, all but $\eps^5n$ many $t\in T_2$ satisfy 
\[ \min\big\{ \deg_{B_{n-\ell}}^+(t,T_1),\deg_{B_{n-\ell}}^-(t,T_1)\big\} \ge\sqrt{\log(1/\eps)} \] and 
\[ \max\big\{\deg_{B_{n-\ell}}^+(t),\deg_{B_{n-\ell}}^-(t)\big\} \le(\log(1/\eps))^2. \]
\end{lemma}
\begin{proof}
The second part holds for all but at most $\eps^5n/3$ vertices immediately given the second item of Lemma~\ref{lem:degree-convergence} and basic computation.

Let us consider the number of total vertices in $T_2$ with at least $7$ out-neighbors in $T_2$, which is a set of size $\lfloor\eps n\rfloor$. By Chernoff, it is easy to see that with probability $1-n^{-\omega(1)}$, we have at most $\eps^5n/3$ vertices with at least $7$ out-neighbors in $T_2$. The same holds for in-neighbors. We obtain a combined exceptional set in $T_2$ is of size at most $\eps^5n$. It suffices to check that for $t\in T_2$ not in this exceptional set, we have $\min(\deg_{B_{n-\ell}}^+(t,T_1),\deg_{B_{n-\ell}}^-(t,T_1))\ge\sqrt{\log(1/\eps)}$.

Now note that $B_{n-\ell}$ and $A$ can be coupled so that $B_{n-\ell}$ is a sub-digraph of $A$ \emph{almost surely}. We can thus apply the first item of Lemma~\ref{lem:degree-convergence} (holding with probability $1-n^{-\omega(1)}$).

Given this event, by the definition of $T_2$ as the highest $\eps^2$ quantile (of minimum in- and out-degree within $B_{n-\ell}$) from a fixed set of size $\eps n$, we have with probability $1-n^{-\omega(1)}$ that $\min(\deg_{B_{n-\ell}}^+(v),\deg_{B_{n-\ell}}^-(v))\ge2\sqrt{\log(1/\eps)}$ for all $v\in T_2$. Since $\deg_{B_{n-\ell}}^+(v,T_1)\ge\deg_{B_{n-\ell}}^+(v)-7\ge\sqrt{\log(1/\eps)}$ outside of our exceptional set, and the same holds for in-degrees, we are done.
\end{proof}

\vspace{1em}

Next we will require the following basic estimate regarding the maximum number of entries in a small diagonal block of the matrix.
\begin{lemma}\label{lem:diagonal-sum}
With probability at least $1-n^{-1 + o(1)}$, for all $m\le t\le n$ and any $S$ with $|S|\le \sqrt{\log n}$ we have
\[\sum_{i,j\in S} (B_{t})_{ij}\le |S|.\]   
\end{lemma}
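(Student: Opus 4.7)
The statement asks for a union bound over all small diagonal sub-blocks of $B_t$, so the natural approach is a first-moment calculation directly on the full matrix $B$ (noting that each $B_t$ is a sub-matrix of $B_n = B$, so any $S \subseteq S_t$ with $|S| \le \sqrt{\log n}$ is in particular a subset of $[n]$ of the same size). The probability in question is at most
\[
\Pr\bigl(\exists\, S \subseteq [n],\ |S| \le \sqrt{\log n},\ \textstyle\sum_{i,j\in S} B_{ij} \ge |S|+1\bigr),
\]
so all that remains is a crude union bound.

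First, I would note that each entry $B_{ij}$ is Bernoulli with parameter $p \le \sqrt{\log n}/n$ (since $d$ is a fixed constant and $\sqrt{\log n}/n$ dominates $d/n$ for $n$ large), and that entries are independent. For a fixed $S$ with $|S| = k$, the sum $\sum_{i,j\in S} B_{ij}$ is a sum of $k^2$ independent Bernoullis each of parameter at most $\sqrt{\log n}/n$. The probability that at least $k+1$ of them equal $1$ is bounded by
\[
\binom{k^2}{k+1}\left(\frac{\sqrt{\log n}}{n}\right)^{k+1}.
\]

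Now I would union bound over all choices of $S$, splitting by $k = |S|$:
\[
\sum_{k=1}^{\lfloor\sqrt{\log n}\rfloor} \binom{n}{k}\binom{k^2}{k+1}\left(\frac{\sqrt{\log n}}{n}\right)^{k+1}
\le \sum_{k=1}^{\lfloor\sqrt{\log n}\rfloor} \left(\frac{en}{k}\right)^k (ek)^{k+1}\left(\frac{\sqrt{\log n}}{n}\right)^{k+1}.
\]
Each summand simplifies to at most $\tfrac{1}{n}\cdot e^{2k+1}k\cdot(\log n)^{(k+1)/2}$. Since $k \le \sqrt{\log n}$, we have $(\log n)^{(k+1)/2} \le (\log n)^{(\sqrt{\log n}+1)/2} = n^{o(1)}$, and similarly $e^{2k+1}k = n^{o(1)}$, so each term is $n^{-1+o(1)}$. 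Summing over the $O(\sqrt{\log n})$ values of $k$ still gives $n^{-1+o(1)}$, completing the bound.

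There is no real obstacle here beyond keeping the combinatorial bookkeeping tidy; the point is simply that $\sqrt{\log n}$ is small enough that a $k$-th moment for $k \le \sqrt{\log n}$ behaves like a polynomial loss in $n^{o(1)}$, while the $k^2$ independent Bernoullis give a factor $n^{-(k+1)}$ from probabilities and one must only verify that $\binom{n}{k}$ cannot swallow the extra factor of $n^{-1}$. The use of the crude bound $p\le \sqrt{\log n}/n$ for all entries (including those in the $T_1\times T_1$ block) is what lets us work with a single probability parameter and makes the estimate entirely uniform in $t$.
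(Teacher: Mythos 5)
Your proposal is correct and takes essentially the same approach as the paper: reduce to subsets of $[n]$ by monotonicity, crudely bound every entry's parameter by $\sqrt{\log n}/n$, and run the union bound $\sum_k \binom{n}{k}\binom{k^2}{k+1}(\sqrt{\log n}/n)^{k+1}$. The bookkeeping and final estimate $n^{-1+o(1)}$ match the paper's proof line by line.
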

\begin{proof}
Noting the monotone nature of the estimate, it suffices to prove that 
\[\sum_{i,j\in S} B_{ij}\le |S|\]
for all $|S|\le \sqrt{\log n}$. We have

\[ \mb{P}\bigg(\exists S\subseteq [n]\colon |S|\le \sqrt{\log n},~\sum_{i,j\in S} B_{ij}\ge |S| + 1\bigg)\le \sum_{k\le \sqrt{\log n}}\binom{n}{k}\binom{k^2}{k+1}\bigg(\frac{\sqrt{\log n}}{n}\bigg)^{k+1},\] which is 
\[\le \sum_{k\le \sqrt{\log n}}\bigg(\frac{en}{k}\bigg)^{k}(ek)^{k+1}\bigg(\frac{\sqrt{\log n}}{n}\bigg)^{k+1} \le \sum_{k\le \sqrt{\log n}}k (e^2\sqrt{\log n})^{k+1}n^{-1},\]
hence this failure probability is bounded by $n^{-1+o(1)}$ as desired.
\end{proof}


\subsection{Proof of Lemma~\ref{lem:degree-event}}

\begin{proof}[Proof of Lemma~\ref{lem:degree-event}]
That the degree sequence of $B$ is $(d,n^{-1/4},8)$-regular follows from Lemma~\ref{lem:degree-convergence}, noting that $A$ and $B$ can be coupled to differ in at most $(\log n)^3$ entries with probability $1-n^{-\omega(1)}$, and noting that the maximum degree of $B$ is bounded by $\log n$ with probability $1-n^{-\omega(1)}$ (which allows control of the second bullet point of Definition~\ref{def:deg-seq} for small $S$).

Noting that $B_t$ is obtained from $B$ by removing at most $\eps n + \ell$ vertices and the largest $\eps n$ vertices have total in- and out-degree bounded by $O(\eps(\log(1/\eps) + d)n)$ from regularity of the degree sequence of $B$, the desired $(d,\eps^{1/2},16)$-regularity for all $B_t$ follows immediately since $\eps\ll 1/d$. Here regular degree sequence is as in Definition~\ref{def:deg-seq}. This establishes the first part of $\mc{D}$ for all relevant matrices.

The second part of the definition of $\mc{D}$ holds for all our matrices with probability $1-n^{-1+o(1)}$ due to Lemma~\ref{lem:diagonal-sum}.
\end{proof}

\section{Unique neighbourhood expansions}\label{sec:UNE}
In this section we formally define the set of unique neighbors of a set $S$ of columns of a $\{0,1\}$-matrix, a concept which we introduced
in the discussion in Section~\ref{sec:sketch-une}. We then go on to define the quasi-randomness event $\cU(r)$ that we use to prove our key anti-concentration estimates.

Let $M$ be an $m\times s$ matrix and let $S\subset [s]$ be a set of columns of $M$. We define $U(S)\subset [m]$ to be a subset of rows in two stages. We first define 
\[ U(S) \setminus S = \big\{ i \in [m]\setminus S\colon B_{ij} = 1 \text{ for a unique } j \in S\big\} \]
and then define
\[ \hspace{-2em} U(S) \cap S = \big\{i\in[m]\cap S\colon B_{ij} = 0 \text{ for all } j \in S \big\}. \] 
We now define our three key quasi-randomness properties involving unique neighborhood expansion that we shall need.  We assume $\kappa>0$ is a small constant such that  
\begin{equation}\label{eq:def-kappa} \eps\ll\kappa\ll 1/d .\end{equation}
We then define the function  
\begin{equation}\label{eq:def-alpha} \alpha(x) = (\log(n/x))^{-2} .\end{equation}

We now define our quasi-randomness event. Say that a $\{0,1\}$-matrix $M$ is in $\mc{U}(r)$ if for all sets $S$  with $|S| \in [r,\kappa n]$, we have
\[|U(S)|\ge\alpha(|S|) |S|.\]
The following is the main quasi-randomness event of the first epoch of our process. 

\begin{lemma}\label{lem:expansion-epoch1}
We have 
\[\mb{P}\bigg(\bigcap_{m\le t\le n-\ell}\big\{B_t^\dagger\in \mc{U}((\log n)^{3/2})\big\}\cap \bigcap_{m\le t\le n-\ell}B_t\in \big\{\mc{U}((\log n)^{3/2})\big\}\bigg) \ge 1-n^{-\omega(1)}.\]
\end{lemma}
The following lemma is the main quasi-randomness event for the second epoch.

\begin{lemma}\label{lem:expansion-epoch2}
We have 
\[\mb{P}\bigg(\bigcap_{n-\ell\le t\le n}\big\{B_t^{\dagger}\in \mc{U}((\log\log n)^{2})\big\} \cap \bigcap_{n-\ell\le t\le n}\big\{B_t^{\ast}\in \mc{U}((\log\log n)^2)\big\}\bigg) \ge 1-(\log n)^{-\omega(1)}.\]
\end{lemma}

We now introduce a variant of the event $\cU(r)$ for the second epoch of the process. Let $M$ be a matrix with rows indexed by $T$ and columns indexed by $T \cup \{t\}$, $t \not\in T$. We say that $M$ is in $\cU^{\ast}$ if all $S \subseteq T \cup \{t\}$, with $t \in S $, satisfy $U(S)\ge 1$.
For the second epoch of our process we also need the following simple quasi-randomness statement. 
\begin{lemma}\label{lem:expansion-epoch2-spec}
We have 
\[\mb{P}\bigg(\bigcap_{n-\ell\le t\le n}\big\{B_t^{\ast}\in \mc{U}^{\ast}\big\}\bigg) \ge 1-(\log n)^{-\omega(1)}.\]
\end{lemma}

In the remainder of this section we will prove the above three lemmas. Towards this goal, we start with some preparatory calculations in the second epoch of the process, as it is less technical to work with. We will then move on to some preparatory calculations for the first epoch, before proving Lemmas~\ref{lem:expansion-epoch1},\ref{lem:expansion-epoch2} and \ref{lem:expansion-epoch2-spec} at the end of the section.

\subsection{The second epoch: calculations in the independent model}
We now bound the number of small sets which do not have many unique neighbors. We remark the analysis here is nearly identical to that in our companion paper \cite[Lemma~4.2]{SSS23}. 

\begin{lemma}\label{lem:unstructured-graph-1}
For $\delta > 0$ and an integer $m$ with $m/n \in [1/2,2]$, let $\ell\in\{m,m+1\}$, and $pm\in[1+\delta,\delta^{-1}]$. Let $M$ be an $m\times\ell$ matrix with iid $\on{Ber}(p)$ entries. Then there exist constants $c > 0$, $C > 0$, depending only on $\delta$, such that the following holds.

For all $k\in[0,cn]$, we have
\[ \mb{E}_{M}\Bigg|\bigg\lbrace S\in\binom{[\ell]}{k}\colon |U(S)| <  \alpha(k)k \bigg\rbrace \Bigg| \leq Ce^{-c k}. \]
\end{lemma}
\begin{remark*}
The assumption that $pm\ge 1+\delta$ is used in a crucial manner although the proof may be adjusted to handled $pm \le 1-\delta$. 
\end{remark*}
\begin{proof}
For the sake of simplicity we will consider the case when $\ell = m+1$; the other case is strictly simpler. Fix a set $S \subseteq [\ell]$ of columns of size $k$. Note that 
\[ |U(S)| = \sum_{i \in [m]} \1(i \in U(S) ), \]
where the sum is over the rows and therefore a sum of independent random variables. Note that for $i\in[\ell]$ we have
\begin{equation} \label{eq:NU-notS}
\PP( i \in U(S)  ) = (1-p)^k =: q_1 \text{ if }i \in S\text{ and } \PP( i \in U(S) )= (1-p)^{k-1}pk =: q_2\text{ if }i \not\in S.
\end{equation}

Let $k' = |S\cap [m]|\in\{k-1,k\}$ and set $T = \alpha(k) k$. Now $|U(S)\cap S|$ is distributed as binomial random variable $B(k',q_1)$ and $|U(S)\setminus S|$ is distributed as $B(m-k',q_2)$. Note by Bernoulli's inequality that $1-q_1\le pk$ and $q_2\ge (1-p)^kpk\ge (1-pk)pk$. Take $\eta$ to be a sufficiently small constant with respect to $\delta$ to be chosen later. By taking $c$ sufficiently small in terms of $\eta$, we have $pk\le\eta$ and $k\le\eta m$.

Therefore we have
\begin{align*}
\EE\bigg|\bigg\{S\in\binom{[\ell]}{k}\colon&|U(S)| < T \bigg\}\bigg| \le \EE\bigg|\bigg\{S\in\binom{[\ell]}{k}\colon|U(S)\cap S| < T,~|U(S)\setminus S| < T\bigg\}\bigg| \\
&\le \sum_{\substack{i,j<T\\k'\in \{k-1,k\}}} \binom{m}{k'} \mb{P}(B(k',q_1) = i)\cdot \mb{P}(B(m-k',q_2) = j)\\
&\le \sum_{\substack{i,j<T\\k'\in \{k-1,k\}}} \binom{m}{k'} \binom{k'}{i}(1-q_1)^{k'-i}\cdot \binom{m-k'}{j}(1-q_2)^{m-k'-j}\\
&\le (T+1)^2\binom{m}{\lfloor T\rfloor}^2\sum_{k'\in \{k-1,k\}} \binom{m}{k'} (pk)^{k'-T}\cdot(1-pk + (pk)^2)^{m-k'-T}\\
&\le 2(T+1)^2\binom{m}{\lfloor T\rfloor}^2(pk)^{-2T}(emp)^k\cdot(1-pk + (pk)^2)^{m-k}.
\end{align*}
Using $\binom{a}{b}\le (\frac{ea}{b})^{b}$ and $1-x\le e^{-x}$, we have 
\begin{align*}
2(T+1)^2\binom{m}{\lfloor T\rfloor}^2&(pk)^{-2T}(emp)^k\cdot(1-pk + (pk)^2)^{m-k}\\
&\le \exp\big(O(T\log(m/T)))\cdot (emp)^k\cdot(1-pk(1-\eta))^{m(1-\eta)}\\
&\le \exp\big(O(T\log(m/T)))\cdot (emp)^ke^{-pkm(1-\eta)^2}\\
&= \exp\big(O(T\log(m/T)))\cdot(emp e^{-pm(1-\eta)^2})^{k}\le Ce^{-ck}.
\end{align*}
The final line follows since if $\delta \gg \eta \gg c$ then for $x\ge 1+\delta$, we have $ex\le e^{x(1-\eta)^2 - 2cx}$.
\end{proof}

We will also require the following exceptional case which is used to handle steps near the end of the process. 

\begin{lemma}\label{lem:case-exceptional}
Let $\delta > 0$, let $m,\ell$ be such that $\ell=m+1$, let \[ pm \in [1+\delta,\delta^{-1}] \quad \text{ and let } \quad \tau m\in [(1/2)\sqrt{\log m},2\sqrt{\log m}].\] If $M$ is a random matrix with independent entries where $M_{ij}\sim\mr{Ber}(p)$, for $j\in[m]$, and $M_{i\ell}\sim\mr{Ber}(\tau)$. Then for all $k\in[0,(\log m)^{1/3}]$, we have
\[ \mb{E}_M\Bigg|\bigg\lbrace S \in \binom{[\ell]}{k+1}\colon \ell\in S \emph{ and } |U(S)| = 0 \bigg\rbrace \Bigg| \le C \exp(-(\log m)^{1/2}/16), \]
where $C >0$ depending only on $\delta$.
\end{lemma}
\begin{proof}
As in Lemma~\ref{lem:unstructured-graph-1}, we have 
\begin{align*}
\mb{P}(i\in U(S))&\ge (1-p)^{k}(1-\tau)\ge1-pk-\tau&&\text{ if } i\in S,\\
\mb{P}(i\in U(S))&\ge \mb{P}(M_{i\ell} = 1)\cdot (1-p)^{k}\ge\sqrt{\log m}/(4m)&&\text{ if }i\notin S.
\end{align*}
Therefore
\begin{align*}
\mb{E}_{M}&\Bigg|\bigg\lbrace S \in \binom{[\ell]}{k+1}\colon\ell\in S,~|U(S)| = 0 \bigg\rbrace \Bigg| \le \binom{m}{k}\cdot (pk + \tau)^{k} (1-\sqrt{\log m}/(4m))^{m-k}\\
& \le e^{k}(m/k)^{k}\cdot\bigg(\frac{3\sqrt{\log m}}{m}\bigg)^{k} \exp(-\sqrt{\log m}/8)\le \exp(-(\log m)^{1/2}/16),
\end{align*}
where we have used that $m$ is sufficiently large with respect to $\delta^{-1}$ hence $pk\le\sqrt{\log m}$.
\end{proof}

\subsection{The first epoch: calculations in the configuration model}\label{sub:unique-neighbor-config}
We now turn to work in the first epoch of the process. For this we will need the notion of a $(d,\mu,C)$-regular degree sequence from Definition~\ref{def:deg-seq}.

\begin{lemma}\label{lem:unstructured-graph-2}
Let $\delta>0$ and $C>0$. Then there exists $c=c(\delta,C)>0$ such that the following holds. For $d\in[1+\delta,\delta^{-1}]$ let $(\mbf{d},\mbf{d}')$ be a degree sequence which is $(d,c,C)$-regular. 

Let $M$ be sampled, uniformly at random, among all bipartite graphs with degree sequence $(\mbf{d},\mbf{d}')$ and let $M$ be the bipartite adjacency matrix of this graph. 

For $k\in[0,cm]$, let $\mc{Y}_k$ be the collection of sets $S$ of the left hand side of the bipartition of size $k$ satisfying
\[ \big|\sum_{i\in S}d_i-k\big|\le k/\sqrt{\log (m/k)} \quad \text{ and } \quad  |U(S)|\le \alpha(k)k.\]
We have that  
\[\mb{E}\, |\mc{Y}_k|\le c^{-1}\exp(-ck).\]
\end{lemma}
\begin{proof}
To perform our calculations, we will operate in the configuration model. Using the second and third bullet points of Definition~\ref{def:deg-seq} and by Lemma~\ref{lem:config}, we have that the associated configuration model is simple with probability $\Omega_{C,d}(1)$ and therefore it suffices to prove the result where the graph associated to $M$ is sampled from the configuration model.

We first bound the probability of the event $S\in\mc{Y}_k$ for a fixed subset $S\subseteq [m]$. We first isolate the large degree verticies on the left hand side of the partition 
\[ V = \{i\colon d_i'\ge(\log(m/k))^3\},\]
by the second item of Definition~\ref{def:deg-seq} we have that $|V|\le \alpha(k)k$. We then break up the left hand side based on the two different ways a vertex can be in $U(S)$
\[ U_1 = S\cap([m]\setminus U(S))\cap([m]\setminus V), \quad \text{ and } \quad 
U_2 = ([m]\setminus S)\cap([m]\setminus U(S))\cap([m]\setminus V). \]

In order to compute the probability that $S\in\mc{Y}_k$, we seek to understand the probability that $j\in U_1$ has no neighbors in $S$ while for $j\in U_2$ we need to understand the probability that it has exactly one neighbor in $S$. Then we will take a union bound over possible revelations of $U_1,U_2$ and study the probability that all vertices in $U_1,U_2$ satisfy the relevant condition.

It suffices to understand the number of stubs attached to each vertex on the right which connect to $S$. This distribution is given precisely by choosing each stub on the right to connect to $S$ independently with probability $(\sum_{i\in S}d_i)/(\sum_{i\in[m]}d_i)$ and conditioning on exactly $\sum_{i\in S}d_i$ stubs being chosen. Note that the probability that the associated binomial distribution has value exactly $\sum_{i\in S} d_i$ is $\Omega(1/k)$.

Let $q=(\sum_{i\in S} d_i)/(\sum_{i\in [m]}d_i)$ and $c' = (\log(1/c))^{-1/4}$. By the first item in the lemma assumptions and the third item of Definition~\ref{def:deg-seq}, we see that $q\in(1\pm c')k/(dm)$. For given possible values of $U_1,U_2$, the probability that $S\in\mc{Y}_k$ is bounded by
\begin{align*}
O_{C,d}(k)\cdot\prod_{i\in U_1}(qd_i')\prod_{i\in U_2}(1-qd_i'(1-q)^{d_i'-1})&\lesssim_{C,d}(1+ 2c')^k\prod_{i\in U_1}\frac{kd_i'}{dm}\prod_{i\in U_2}(1-qd_i'+(qd_i')^2)\\
&\lesssim_{C,d} e^{2c'k}\prod_{i\in U_1}\frac{kd_i'}{dm}\prod_{i\in U_2}\bigg(1-(1-2c')\frac{kd_i'}{dm}\bigg)\\
&\lesssim_{C,d} e^{2c'k}\prod_{i\in U_1}\frac{kd_i'}{dm}\cdot\exp\bigg(-(1-2c')\sum_{i\in U_2}\frac{kd_i'}{dm}\bigg).
\end{align*}
Next note that as $c$ is sufficiently small with respect to $C$ and $d$, we have
\[\sum_{i\in [m]\setminus U_2}\frac{kd_i'}{dm} \le \frac{2k}{dm}\cdot C(d + \log(m/k))k\le c^{1/2}k.\]
Note that we are using that $[m]\setminus U_2\le |S| + |V| + |U(S)|\le 2k$ by assumption.

Now fix a constant $\eta$ sufficiently small with respect to $\delta$, and suppose $c$ is chosen sufficiently small with respect to $\eta$. Note that $|S\setminus U_1|\le|V|+|U(S)|\le 2\alpha(k)k$. It follows that
\begin{align*}
e^{2c'k}\prod_{i\in U_1}&\frac{kd_i'}{dm}\cdot\exp\bigg(-(1-2c')\sum_{i\in U_2}\frac{kd_i'}{dm}\bigg)\le e^{3c'k}\prod_{i\in U_1}\frac{k(d_i'+\eta)}{dm}\cdot\exp\bigg(-(1-2c')\sum_{i\in [m]}\frac{kd_i'}{dm}\bigg)  \\
&\qquad\qquad\le e^{6c'k}\prod_{i\in S}\frac{k(d_i'+\eta)}{dm}\cdot \prod_{i\in S\setminus U_1}\frac{dm}{k\eta}\cdot\exp(-k)\\
&\qquad\qquad\le e^{6c'k-k}\prod_{i\in S}\frac{k(d_i'+\eta)}{dm}\cdot \bigg(\frac{dm}{k\eta}\bigg)^{2\alpha(k)k}\le e^{7c'k-k}\prod_{i\in S}\frac{k(d_i'+\eta)}{dm}
\end{align*}
Since $|S\setminus U_1|,|(m\setminus U(S))\setminus U_2|\le 2\alpha(k)k$ by $S\in\mc{Y}_k$, we have that there are at most 
\[\sum_{j\le 2\alpha(k)k}\binom{k}{j}\cdot\sum_{j\le 2\alpha(k)k}\binom{m}{j}\le\bigg(\frac{em}{k\alpha(k)}\bigg)^{5\alpha(k)k}\le\exp(c'k)\]
choices for $U_1,U_2$ (given $S$). Therefore by the union bound 
\[\mb{P}(S\in\mc{Y}_k)\lesssim_{C,d}e^{8c'k-k}\prod_{i\in S}\frac{k(d_i'+\eta)}{dm}.\]

We now bound the sum over all $S$. Namely, we use the inequality
\[\sum_{S\in\binom{[m]}{k}}\prod_{i\in S}y_i\le\frac{1}{k!}\bigg(\sum_{i=1}^my_i\bigg)^k\]
with the choices $y_i=d^{-d_i}k(d_i'+\eta)/(dm)$. From this we deduce
\[d^{-k-(c')^2k}\mb{E}|\mc{Y}_k|\lesssim_{C,d}e^{8c'k-k}\cdot\frac{1}{k!}\bigg(\sum_{i=1}^md^{-d_i}\frac{k(d_i'+\eta)}{dm}\bigg)^k;\]
note that if $S\in\mc{Y}_k$ is counted then $\sum_{i\in S}d_i\le k+(c')^2k$. By the third item of Definition~\ref{def:deg-seq} recall $\sum_{i=1}^md^{-d_i}d_i'=(1\pm c)ed\exp(-d)m$. Additionally, Stirling's formula shows $k!\lesssim k(k/e)^k$. Recalling $k\le cm$ and that $c'$ is small with respect to $d$, we find
\[\mb{E}Y_k\lesssim_{C,d}e^{9c'k}((1+c)ed\exp(-d)+\eta)^k.\]
Now since $c,c'\ll\eta\ll\delta$ we have $(1+c)ed\exp(-d)+\eta\le\exp(-9c'-c)$ for all $d\ge1+\delta$ (since $ex\exp(-x)<1$ for $x>1$, similar to the conclusion of the proof of Lemma~\ref{lem:unstructured-graph-1}). This concludes the proof. 
\end{proof}

\subsection{Proofs of quasi-randomness properties}

We now turn to prove Lemmas~\ref{lem:expansion-epoch1}, \ref{lem:expansion-epoch2} and \ref{lem:expansion-epoch2-spec}. For this we first record the following basic property.

\begin{lemma}\label{lem:dense-sets}
Let $A$ be as in Section~\ref{sec:setup}. There exists an absolute constant $C>0$ such that the following holds. Let $1\le t\le s\le n\exp(-Cd)$. Then with probability at least $1-e^{-s}n^{-1/2}$, we have 
\[\sup_{\substack{|S| = s\\|T| = t\\ S\cap T = \emptyset}} \sum_{\substack{i\in S\\ j\in S\cup T}} A_{ij}\le s + t + \frac{Cs\log(2d)}{\log(n/s)}.\] 
\end{lemma}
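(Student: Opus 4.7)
The plan is a Markov/union-bound argument over disjoint pairs $(S,T)$ with $|S|=s$ and $|T|=t$, combined with a Chernoff tail estimate. For any fixed such pair, the random variable $X_{S,T}:=\sum_{i\in S,\,j\in S\cup T}A_{ij}$ is distributed as $\on{Bin}(s(s+t),d/n)$, and our task reduces to bounding the expected number of ``bad'' $(S,T)$ (those with $X_{S,T}>s+t+\gamma$) by $e^{-s}n^{-1/2}$, where $\gamma := Cs\log(2d)/\log(n/s)$.

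Writing $k=s+t+\gamma$, the standard Chernoff inequality $\mb{P}(\on{Bin}(N,p)\ge k)\le (eNp/k)^k$ (applied with $N=s(s+t)$, $p=d/n$), combined with the tangent bound $((s+t)/(s+t+\gamma))^{s+t+\gamma}\le e^{-\gamma}$ coming from $\log(1+x)\ge x/(1+x)$, gives
\[
\mb{P}(X_{S,T}\ge k)\le (eds/n)^{s+t+\gamma}\,e^{-\gamma}.
\]
Multiplying by $\binom{n}{s}\binom{n-s}{t}\le (en/s)^s(en/t)^t$ and simplifying (the $e^{\gamma}$ from $(eds/n)^{s+t+\gamma}$ cancels the $e^{-\gamma}$), the expected number of bad pairs is bounded above by
\[
e^{2(s+t)}(s/t)^t(s/n)^\gamma d^{s+t+\gamma}.
\]
Using $(s/t)^t\le e^{s/e}$ (since $t\le s$), and the hypothesis $s\le n\exp(-Cd)$ which forces $\gamma\le s$ and $\log(n/(sd))\ge (2/3)\log(n/s)$ provided $C$ is absolute and large, taking logarithms and substituting $\gamma\log(n/s)=Cs\log(2d)$ reduces the goal to a scalar inequality of the form
\[
Cs\log(2d)\ge O(s)+O(s\log d)+(\log n)/2,
\]
which holds for $C$ a sufficiently large absolute constant whenever $s$ is at least a small constant multiple of $\log n$.

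The only technical wrinkle is the small-$s$ regime where $\gamma<1$ and the integer-valued $X_{S,T}$ forces the effective threshold to be $s+t+1$ rather than $s+t+\gamma$. In that regime I would re-run the same union-bound + Chernoff argument with $k=s+t+1$ directly; the resulting bound $(en/s)^s(en/t)^t(eds/n)^{s+t+1}$ carries an explicit $n^{-1}$ factor, while the remaining factors are polylogarithmic in $n$ when $s\lesssim\log n$, so that the bound $\le e^{-s}n^{-1/2}$ follows by a crude direct computation. The small-$s$ and large-$s$ analyses overlap to cover the full range $1\le s\le n\exp(-Cd)$. The main obstacle is arranging enough slack in the final scalar inequality to absorb the $(\log n)/2$ term coming from the $n^{-1/2}$ target; everything else is routine bookkeeping with Chernoff and Stirling-type estimates.
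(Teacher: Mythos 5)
Your proposal is correct and follows essentially the same route as the paper's proof: a union bound over disjoint pairs $(S,T)$, the elementary binomial tail bound $\binom{N}{k}p^k\le(eNp/k)^k$, and a reduction to a scalar inequality settled by a large absolute constant $C$, with the two cases (your $\gamma<1$ versus $\gamma\ge 1$) corresponding exactly to the paper's $g(s)=0$ versus $g(s)\neq 0$ split. The only cosmetic discrepancy is the phrase "polylogarithmic in $n$" in the small-$s$ case — the factor $e^{O(s)}d^{O(s)}$ there is really $n^{O(1/C)}$ rather than polylogarithmic, but the conclusion $\le e^{-s}n^{-1/2}$ still follows once $C$ is taken large.
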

\begin{proof}
Let $g(s) = \Big\lfloor\frac{Cs\log(2d)}{\log(n/s)}\Big\rfloor$. We have
\begin{align*}
\mb{P}\bigg[\sup_{\substack{|S| = s\\|T| = t\\ S\cap T = \emptyset}}\sum_{\substack{i\in S\\ j\in S\cup T}}A_{ij}\ge s+t+g(s)+1\bigg]&\le \binom{n}{s}\binom{n}{t}\mb{P}\bigg[\sum_{\substack{i\in [s]\\ j\in [s+t]}}A_{ij}\ge s+t+g(s)+1\bigg]\\
&\le \bigg(\frac{en}{s}\bigg)^{s}\bigg(\frac{en}{t}\bigg)^{t}\binom{s(s+t)}{s+t+g(s)+1}\bigg(\frac{d}{n}\bigg)^{s+t+g(s)+1}\\
&\le \bigg(\frac{en}{s}\bigg)^{s}\bigg(\frac{en}{t}\bigg)^{t}(es)^{s+t+g(s)+1}\bigg(\frac{d}{n}\bigg)^{s+t+g(s)+1}\\
&\le (ed)^{4(s+t)}n^{-g(s)-1}s^{-s}t^{-t}s^{s}s^{t}s^{g(s)+1}\\
&\le(ed)^{8s}(s/t)^{t}(s/n)^{g(s)+1}\le e^s(ed)^{8s}(s/n)^{g(s)+1}\\
&\le e^{-s}n^{-1/2}.
\end{align*}
The final inequality is trivial to check when $g(s)\neq 0$ and when $g(s)=0$, $(3d)^{40s}\le n$ and hence the desired inequality also holds.
\end{proof}

\begin{proof}[Proof of Lemma~\ref{lem:expansion-epoch1}]
It suffices to prove that for any $m\le t\le n-\ell,$ we have
\[\mb{P}\big(B_t\in \mc{U}((\log n)^{3/2})\big) \ge 1-n^{-\omega(1)};\]
the case of $B_t^\dagger$ follows by symmetry. Note that $B_t$ has a $(d, \eps^{1/2}, 16)$-regular degree sequence with probability $1-n^{-\omega(1)}$. Therefore by Lemma~\ref{lem:unstructured-graph-2} applied with $c = c(\min(d-1,1/d),16)$, it follows that the probability there exists a set $S$ of $k$ with $|\sum_{v\in S}\deg_{B_t}^{+}(v)-k|\le k/\sqrt{\log(n/k)}$ and $|U(S)|\le \alpha(k)k$ is at most $c^{-1}\exp(-ck) + n^{-\omega(1)}$ by Markov's inequality. We can take a union bound over $k\ge(\log n)^{3/2}$ to handle these $S$.

We now handle $S$ such that $|\sum_{v\in S}\deg_{B_t}^{+}(v)-k|\ge k/\sqrt{\log(n/k)}$.
If $\sum_{i,j\in S}(B_t)_{ij}\le k - k/(4\sqrt{\log(n/k)})$, note that 
\[|S\cap U(S)|\ge |S|-\sum_{i,j\in S}(B_t)_{ij}\ge k/(4\sqrt{\log(n/k)})\ge k\alpha(k).\]
Thus we may restrict to sets $S$ such that $\sum_{v\in S}\deg_{B_t}^{+}(v)\ge k + k/\sqrt{\log(n/k)}$ and $\sum_{i,j\in S}(B_t)_{ij}\ge k - k/(4\sqrt{\log(n/k)})$.

Further assuming that $U(S)\le\alpha(k)k$ occurs, there exists a set $T$ such that $|T| \le 3k/(8\sqrt{\log(n/k)})$ with $T\cap S = \emptyset$ such that 
\[\sum_{\substack{i\in S\\j\in S\cup T}}(B_t)_{i,j}\ge k + \frac{k}{2\sqrt{\log(n/k)}}.\]
In particular, $T$ can be taken to be the set of vertices outside $S$ with at least $2$ neighbors in $S$, truncating $T$ to the appropriate size if it is too large. This contradicts Lemma~\ref{lem:dense-sets} (which holds with probability $\ge 1-e^{-k}n^{-1/2}$) since
\[k + |T| + \frac{Ck\log(2d)}{\log(n/k)}< k + \frac{k}{2\sqrt{\log(n/k)}}\]
with $C$ as in Lemma~\ref{lem:dense-sets} and since $\kappa\ll1/d$.
\end{proof}

\begin{proof}[Proof of Lemma~\ref{lem:expansion-epoch2}]
The result follows immediately from Lemma~\ref{lem:unstructured-graph-1} and taking the union bound over sizes larger than $(\log\log n)^2$ if $B_t$ has independent $\on{Ber}(d/n)$ entries. We can couple to the correct model with a polynomial loss in TV-distance by the remarks after Lemma~\ref{lem:TV-estimate}.
\end{proof}

\begin{proof}[Proof of Lemma~\ref{lem:expansion-epoch2-spec}]
Unique-neighbor expansion of sets for size larger than $(\log\log n)^2$ follows immediately from Lemma~\ref{lem:expansion-epoch2}. The remaining result follows immediately from Lemma~\ref{lem:case-exceptional} and the remarks following Lemma~\ref{lem:TV-estimate} to account for slight differences in the random model.
\end{proof}

\section{Spreadness of near kernel vectors}\label{sub:unstructuredness}

We now prove the crucial vector ``spreadness'' estimates for vectors that are ``close'' to the kernel of our matrix, as we discussed in Section~\ref{sec:sketch-spreadness}, and in particular in \eqref{eq:sketch-spreadness}. We will show that matrices that satisfy the quasi-randomness events $\mc{D}$, $\mc{U}(r)$, and $\mc{U}^\ast$ (which we defined in Sections~\ref{sec:deg-seq} and \ref{sec:UNE}) have the property that vectors close to their kernel are spread.  


The main objective of this section is to prove the following three propositions. Our first proposition, Proposition~\ref{prop:unstructured-main-1}, will handle the first epoch, while Propositions~\ref{prop:unstructured-main-2} and \ref{prop:unstructured-main-3} will handle the second epoch. It is perhaps useful to recall from our discussion in Section~\ref{sec:sketch-spreadness} that the condition $v_k^\ast\ge(k/t)^2/\sqrt{t}$ in the following proposition will ultimately be guaranteed by a lemma of Litvak, Lytova, Tikhomirov, Tomczak-Jaegermann, and Youssef (Lemma~\ref{lem:basis}) that provides us with suitable a basis of the space spanned by the smallest singular directions. Here $v_{k}^{\ast}$ denotes the magnitude of the $k$-th largest coordinate of a vector $v$. 

\begin{prop}\label{prop:unstructured-main-1}
Let $\eps \ll 1/d$ and $z\in\mb{C}\setminus\{0\}$, then there exist constants $c=c(d)>0$ and $C'=C'(d,z)>0$ so that the following holds. 

Let $m\le t\le n-\ell$ and let $M\in\{B_t,B_t^\dagger\}$ be such that $M\in\mc{U}((\log n)^{3/2}) \cap \mc{D}$. If $(\log n)^{7/4}\le k\le n$, $v\in\mb{C}^t$ is a vector with $v_k^\ast\ge(k/t)^2/\sqrt{t}$, and
\[\snorm{(M-zI_t)v}_2\le\exp(-C'(\log(2n/k))^6),\]
then 
\[\sup_{\theta\in\mb{R}}\Big|\Big\{ i\colon|v_i-\theta|\le n^{-1/2} \cdot \exp(-C'(\log(2n/k))^7) \Big\}\Big|\le(1-c)n.\]
\end{prop}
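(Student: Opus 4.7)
The plan is to argue by contradiction: assume there exists $\theta \in \mb{R}$ such that $|N| > (1-c)n$, where $N := \{i : |v_i - \theta| \le \delta'/\sqrt{n}\}$ with $\delta' := \exp(-C'(\log(2n/k))^7)$; set $S := [t] \setminus N$ and write $v = \theta\mbf{1} + w$. Choosing $c \le \kappa/2$ ensures $|S| \le \kappa n$, bringing us into the range where unique-neighborhood expansion of $M$ applies.

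The first step is to extract a bound on $|\theta|$ from the near-kernel equation $(M - zI)v = \eta$, where $\|\eta\|_2 \le \delta := \exp(-C'(\log(2n/k))^6)$. Using $M \in \mc{D}$'s degree regularity, at most $O(|S|\log(n/|S|))$ rows have an $M$-neighbor in $S$, so a sub-bulk $N_0 \subseteq N$ of size $\ge n/2$ consists of rows with no such neighbor. For each $i \in N_0$, the row equation collapses to $\theta(d_i^+(M) - z) = \eta_i + O((d_i^+(M) + |z|)\delta'/\sqrt{n})$. Since $d_i^+(M) \in \mb{Z}_{\ge 0}$ and $z$ is bounded away from $\mb{Z}$ by $c_z > 0$ (absorbed into $C'$), an $\ell^2$-sum over $N_0$ combined with $\sum(d_i^+)^2 \lesssim nd^2$ from regularity yields $|\theta| = O(\delta/(c_z\sqrt{n}))$.

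Next I would locate the top coordinates. For $i \in N$, $|v_i| \le |\theta| + \delta'/\sqrt{n} = O(\delta/\sqrt{n})$, and for $C'$ sufficiently large relative to $d$ and $z$ this is much smaller than $(k/t)^2/\sqrt{t}$ whenever $k \ge (\log n)^{7/4}$. Hence the $k$ largest coordinates of $|v|$ must lie in $S$, forcing $|S| \ge k \ge (\log n)^{7/4}$, which is comfortably inside $[(\log n)^{3/2}, \kappa n]$.

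The crucial final step invokes $M \in \mc{U}((\log n)^{3/2})$, giving $|U(S)| \ge \alpha(|S|)|S|$. For each $i \in U(S) \setminus S$ with unique neighbor $j_i \in S$, the row equation pins $v_{j_i}$ to $\theta(z - d_i^+ + 1) + zw_i$ up to $|\eta_i| + O((d_i^+ + |z|)\delta'/\sqrt{n})$. I expect the main obstacle to sit here: closing the exponential gap between $\delta$ (the hypothesis scale) and $\delta'/\sqrt{n}$ (the conclusion scale). The plan is to iterate---coordinates $j \in S$ whose values get pinned down to a tiny neighborhood are absorbed into the bulk, and re-running Step~1 with this enlarged bulk sharpens the bound on $|\theta|$, tightening the error scale in the next iteration. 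After $O(\log(2n/k))$ iterations---matching the gap between $\log^6$ and $\log^7$ in the exponents---either $|S|$ drops below $k$, contradicting the top-coordinate location from Step~2, or the pinning radius shrinks below $\delta'/\sqrt{n}$, contradicting $|v_j - \theta| > \delta'/\sqrt{n}$ for $j \in S$. The bookkeeping required to ensure geometric contraction of the error, persistence of unique-neighborhood expansion as $|S|$ shrinks, and control of atypical rows (those with $|\eta_i|$ unusually large, handled via Markov-type tail bounds on $\eta$) will form the technical heart of the argument.
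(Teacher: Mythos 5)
Your Step 1 parallels the paper's opening move in spirit, but the claim that ``$z$ is bounded away from $\mb{Z}$ by $c_z>0$'' is not part of the hypotheses---the proposition only assumes $z\neq 0$. The paper avoids this by restricting to the zero rows of $M$ (the event $\mc{D}$ guarantees $\Omega_d(n)$ of them), for which the row equation reads $-zv_i=\eta_i$ so one gets $|\theta|$ small with only $z\neq 0$ (Lemma~\ref{lem:large-zero-level-set}). Zero rows are a subset of the rows you are using, so the fix is easy; I flag it because the ``absorb $c_z$ into $C'$'' gesture is not legitimate as written.

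The genuine gap is in Step 3. Your iteration cannot close the quantitative loop. The pinning of $v_{j_i}$ via the row equation for $i\in U(S)\setminus S$ carries an error of order $|\eta_i|+d_i\delta'/\sqrt{n}$. Even if you filter to rows with small $|\eta_i|$ via Markov on $\sum_i|\eta_i|^2\le\delta^2$, the typical pinning radius is of order $\delta/\sqrt{|U(S)|}\approx\delta/\sqrt{n}$, which is the \emph{hypothesis} scale, not the target scale $\delta'/\sqrt{n}$---the ratio $\delta/\delta'=\exp(C'[(\log(2n/k))^7-(\log(2n/k))^6])$ is super-polynomially large. Pinned coordinates therefore cannot be absorbed into the bulk $N$ (which is defined with radius $\delta'/\sqrt{n}$), and re-running your Step~1 with a notionally enlarged bulk does not improve the $\theta$-bound, because the $\theta$-bound is already at the scale $\delta/\sqrt{n}$ dictated by $\|\eta\|_2$. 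Moreover, the number of \emph{distinct} coordinates $j_i\in S$ pinned in one round is not controlled: all unique neighbors might share the same $j_i$, so $|S|$ need not shrink at all, let alone within $O(\log(2n/k))$ rounds.

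The paper's proof works in the opposite direction and does $\ell^2$ rather than pointwise accounting, which is precisely what sidesteps both problems. After the zero-row reduction (which takes $\theta$ to effectively $0$), the goal becomes the lower bound $v_{cn}^\ast\gtrsim\exp(-O_d((\log(n/k))^5))/\sqrt{n}$, propagated from $v_k^\ast$ via the chain $k'\mapsto k'+g(k')$ of Lemma~\ref{lem:iteration}. At each step one picks $S$ = top $k'$ coordinates, a buffer $S'$ = next $g(k')$, and ``suitable'' indices in $U(S)$ (small in-degree, not adjacent to $S'$); each such index $j$ contributes $|((M-zI)v)_j|\gtrsim v_{k'}^\ast$, and there are $\gtrsim\alpha(k')k'$ of them, so the $\ell^2$ norm of $(M-zI)v$ forces $v_{k'+g(k')}^\ast\ge v_{k'}^\ast\cdot(k'/(dn\min(|z|,|z|^{-1})))^2$. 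The multiplicative loss per step is polynomial in $n/k'$, and the chain length is $O_d((\log(n/k))^4)$, giving the $(\log(2n/k))^5$-type total loss that sits comfortably inside the $(\log(2n/k))^6$ budget. If you want to rescue the backward argument, you would need to replace the single-row pinning by an $\ell^2$ argument over the suitable rows with a buffer set---at which point you are essentially reproducing the forward iteration.
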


The following two propositions will be used to handle the second epoch. For the following, we recall that we set $\ell = \lfloor (\log n)^2 \rfloor$ in Section~\ref{sec:setup}.

\begin{prop}\label{prop:unstructured-main-2}
Let $\eps\ll 1/d$ and $z\in\mb{C}\setminus\{0\}$. Then there are constants $c=c(d)>0$ and $C'=C'(d,z)>0$ so that the following holds. 

Let $n-\ell\le t\le n-1$ and let $M=B_t^{\dagger}$ be such that $M\in\mc{U}((\log\log n)^2) \cap \mc{D}$. If $v\in\mb{C}^t$ is a unit vector and
\[\snorm{(M-zI_t)v}_2\le\exp(-C'(\log n)^6),\]
then
\[v_{\lfloor cn\rfloor}^\ast\ge n^{-1/2} \cdot \exp(-C'(\log n)^7) .\]
\end{prop}

For the second epoch we also need the following Proposition. Actually the proof of this is strictly more complicated than the proof of Proposition~\ref{prop:unstructured-main-2}, and so we omit the proof of the previous proposition. 

\begin{prop}\label{prop:unstructured-main-3}
Let $\eps\ll 1/d$ and $z\in\mb{C}\setminus\{0\}$ be such that $|z|\neq 1$. Then there are constants $c=c(d)>0$ and $C'=C'(d,z)>0$ such that the following holds. 

Let $n-\ell\le t\le n-1$ and let $M=B_t^\ast$ satisfy $M\in\mc{U}((\log\log n)^2) \cap \mc{U}^{\ast} \cap \mc{D}$. If $v\in\mb{C}^t$ is a unit vector and
\[\snorm{(M-zI_{(t-1)\times t})v}_2\le\exp(-C'(\log n)^6),\]
then
\[v_{\lfloor cn\rfloor}^\ast\ge n^{-1/2} \cdot \exp(-C'(\log n)^7).\]
\end{prop}

\vspace{1em}

\subsection{Initial estimates and setup}
We first require the connection between unique neighborhood expansion and the images of vectors.
\begin{observation}\label{obs:NU-obs}
Let $M$ be a $(t-1)\times t$ or $t\times t$ dimensional $\{0,1\}$-matrix. For $\ell\leq t$, let $v \in \C^{t}$ and let $S$ be the set of the $\ell$ largest coordinates of $v$ in absolute value. Then $|((M-zI)v_S)_i| \geq v^{\ast}_{\ell}\min(|z|,1)$ for all $i \in U(S)$ where $U(S)$ is defined with respect to the matrix $M$ and $I$ is the identity matrix with dimensions corresponding to $M$.
\end{observation}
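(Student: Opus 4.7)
The plan is to verify the inequality by a direct coordinate-wise computation, splitting according to the defining dichotomy $U(S) = (U(S)\setminus S) \sqcup (U(S) \cap S)$. There is no real obstacle here: the set $U(S)$ is engineered precisely so that for each $i \in U(S)$, the $i$-th entry of $(M-zI)v_S$ reduces to a single term coming either from the matrix action or from the diagonal shift, and in either case that term has magnitude comparable to $v^\ast_\ell$.

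First I would handle $i \in U(S)\setminus S$. Because $i \notin S$, the diagonal contribution vanishes, $(Iv_S)_i = 0$, so $((M-zI)v_S)_i = (Mv_S)_i = \sum_{j \in S} M_{ij} v_j$. By the definition of $U(S)\setminus S$ there is a unique $j_0 \in S$ with $M_{ij_0}=1$ while $M_{ij}=0$ for the remaining $j \in S$, so the sum collapses to $v_{j_0}$. Since $j_0 \in S$ lies among the $\ell$ largest coordinates of $v$ in absolute value, $|v_{j_0}| \ge v^\ast_\ell$, which exceeds $\min(|z|,1) v^\ast_\ell$.

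Next I would handle $i \in U(S) \cap S$. Here every $M_{ij}$ vanishes for $j \in S$ by definition, so $(Mv_S)_i = 0$; meanwhile $(Iv_S)_i = v_i$ since $i \in S$. Hence $((M-zI)v_S)_i = -zv_i$, and because $i \in S$ we again have $|v_i| \ge v^\ast_\ell$, giving magnitude $|z| \cdot v^\ast_\ell \ge \min(|z|,1) v^\ast_\ell$.

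The only bookkeeping point is the rectangular $(t-1) \times t$ case, where $I$ should be read as $[I_{t-1} \mid 0]$, consistent with the use of $I_{(t-1)\times t}$ in Proposition~\ref{prop:unstructured-main-3}; since $U(S) \subseteq [t-1]$ by construction, we still have $(Iv_S)_i = (v_S)_i$ for each $i \in U(S)$, and the two computations above transfer verbatim. Combining the two cases gives $|((M-zI)v_S)_i| \ge \min(|z|,1)\, v^\ast_\ell$ uniformly on $U(S)$, which is the claimed bound.
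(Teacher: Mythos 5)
Your proof is correct and is essentially the same as the paper's: both split into the two cases $i \in U(S)\setminus S$ and $i \in U(S)\cap S$, collapse the sum to a single surviving term (either $v_{j_0}$ with $j_0\in S$, or $-zv_i$ with $i\in S$), and then invoke $|v_j|\ge v_\ell^\ast$ for $j\in S$. Your explicit remark about the rectangular $(t-1)\times t$ case and why $U(S)\subseteq[t-1]$ keeps the diagonal argument valid is a sensible addition that the paper leaves implicit.
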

\begin{proof}
We consider two cases. If $i\in U(S)\setminus S$ there is unique $j\in S$ with $(M-zI)_{ij}\neq 0$. As $M$ is a $\{0,1\}$-matrix, we additionally have $M_{ij} = 1$ and we have $|((M-zI)v_S)_i|=|v_i|\ge v^{\ast}_{\ell}$. This proves the observation in this case.

For $i\in U(S)\cap S$ we have $M_{ij} = 0$ for all $j\in S$. So 
\[|((M-zI)v_S)_i|=|(M-zI)_{ii}v_i| = |(-z)v_i|=|z||v_i| \geq v_\ell^\ast|z|,\]
which proves the observation.
\end{proof}

Recall that above we defined the function $\alpha(x) = (\log (n/x) )^{-2}$. Here we define the function 
\begin{equation}\label{eq:g(k)-def}  
g(x)= \left\lceil \frac{\alpha(x)x}{2^{15}(d+\log (n/x))}\right\rceil.
\end{equation} 

We require the following trivial iteration lemma.
\begin{lemma}\label{lem:iteration}
Let $k_0 = k$ and define
\[ k_i = k_{i-1} + g(k_{i-1}),\]
for all $i \geq 1$. Let $\tau$ be the minimal value such that $k_{\tau}\ge n/2.$ Then $\tau \le 2^{17}d(\log (n/k))^4$.
\end{lemma}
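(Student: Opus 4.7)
The plan is to treat the recurrence $k_{i+1} = k_i + g(k_i)$ as slow geometric growth and bound $\tau$ by partitioning iterations into phases indexed by $\lfloor \log(n/k_i) \rfloor$. First, I would drop the ceiling in the definition of $g$ to obtain the cleaner lower bound
\[ \frac{k_{i+1}}{k_i} \geq 1 + \frac{1}{2^{15}(d+\log(n/k_i))(\log(n/k_i))^2}, \]
so each iteration multiplies $k_i$ by a factor whose slack depends only on $\log(n/k_i)$.

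Next, I would define phase $j \geq 1$ to consist of those indices $i$ with $k_i \in [ne^{-j}, ne^{-j+1})$; throughout this phase $\log(n/k_i) \leq j$, so the per-step growth factor is at least $1 + 1/[2^{15}(d+j)j^2]$. Since leaving phase $j$ requires multiplying $k_i$ by at most $e$, the elementary estimate $\log(1+x) \geq x/2$ for $x \in (0,1)$ yields that the number of iterations spent in phase $j$ is at most $2^{16}(d+j)j^2$.

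Starting at $k_0 = k$, which lies in phase $\lceil L \rceil$ with $L := \log(n/k)$, and ending at $k_\tau \geq n/2$, which is in phase at most $1$, we cross at most $\lceil L \rceil$ phases. Summing the per-phase estimates and using $\sum_{j=1}^{J}(d+j)j^2 \leq (d+J)J^3$, we obtain
\[ \tau \leq \sum_{j=1}^{\lceil L \rceil} 2^{16}(d+j)j^2 \leq 2^{16}(d+L+1)(L+1)^3 \leq 2^{17}\,d\,L^4, \]
where the final inequality is valid once $L \geq 1$ and $d$ is a fixed constant bounded away from $1$. For the edge regime $L < 1$ (equivalently $k \geq n/e$), only a constant number of steps are needed to reach $n/2$, and this constant is dominated by $2^{17}dL^4$ since $L > \log 2$ (as $\tau = 0$ when $k \geq n/2$) and $d$ is bounded below.

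The entire argument is routine arithmetic on the recursion; the only mild obstacle is matching the explicit constants $2^{15}$ and $2^{17}$ between the definition of $g$ and the claimed bound, and handling the edge regime where $L$ is small. No probabilistic or structural input beyond the explicit definition of $g$ is required.
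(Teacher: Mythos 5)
Your proof is structurally sound and close in spirit to the paper's, which also breaks the iteration into $O(\log(n/k))$ multiplicative chunks and bounds the steps per chunk by $O(d(\log(n/k))^3)$. The bookkeeping differs: the paper uses doublings and directly counts additive steps via $k_0/g(k_0) \leq 2^{15}(d+\log(n/k_0))/\alpha(k_0)$, observing $g$ is increasing in its argument so that the initial value dominates; you use $e$-scale phases indexed by $\lceil \log(n/k_i) \rceil$ and convert the per-step multiplicative gain to a step count via $\log(1+x)\ge x/2$. The phase version is a perfectly valid (and arguably slightly sharper, since it tracks the decaying value of $\log(n/k_i)$) alternative.

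The one concrete issue is your final constant-matching step. The inequality $2^{16}(d+L+1)(L+1)^3 \leq 2^{17}dL^4$ is asserted to hold for $L \geq 1$ and $d$ bounded away from $1$, but this is false in the moderate regime: for instance $L=3$, $d=2$ gives $2^{16}\cdot 6\cdot 64 = 2^{16}\cdot 384 > 2^{16}\cdot 324 = 2^{17}\cdot 2\cdot 81$. The edge-regime argument for $L<1$ has the same problem (one phase already costs up to $2^{16}(d+1)$ steps, which exceeds $2^{17}dL^4 \approx 2^{15}\cdot 0.92\,d$ for $L$ near $\log 2$). That said, the paper's own one-line proof is similarly cavalier at the small-$L$ boundary, and what the downstream applications (the proof of Proposition~\ref{prop:unstructured-main-1}) actually need is only $\tau = O_d((\log(n/k))^4)$ with the constant absorbed into $C'$, so none of this affects the correctness of the paper. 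You should just flag that your bound is $O(d(\log(n/k))^4)$ with a possibly larger absolute constant rather than insisting on $2^{17}$ exactly.
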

\begin{proof}
For $k\ge n/2$, the result is trivial. Furthermore note that it takes at most $2^{15}(d+\log(n/k))/\alpha(k)\le 2^{16}d(\log(n/k))^3$ steps to double. As there are at most $2\log(n/k)$ doublings required, the desired result follows immediately.
\end{proof}

We finally will require the following graph-theoretic estimate which will allow us to eliminate graphs with extremely large level sets.

\begin{lemma}\label{lem:large-zero-level-set}
Consider an $\ell\times m$ dimensional $\{0,1\}$-matrix $M$ with $\ell\in \{m-1,m\}$, $z\neq 0$, and let $\theta > 0$. Suppose that $M$ has at least $e^{-d}n/2$ vertices with in-degree zero (i.e., zero rows). Then for a unit vector $v\in \mb{C}^m$ such that 
\[\snorm{(M-zI_{\ell\times m})v}\le\theta|z|,\]
there are at least $e^{-d}n/4$ indices $j$ such that 
\[|v_j|\le 2e^{d/2}\theta n^{-1/2}.\]
\end{lemma}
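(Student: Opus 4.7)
The plan is to exploit the observation that an all-zero row of $M$ at index $i$ (with $i\le\ell\le m$) forces the $i$-th coordinate of $(M-zI_{\ell\times m})v$ to be exactly $-z v_i$, since all off-diagonal entries in row $i$ of $M$ vanish and the $I_{\ell\times m}$ contribution picks out $v_i$ with coefficient $-z$. This immediately converts global control of $\snorm{(M-zI_{\ell\times m})v}$ into pointwise $\ell^2$ control of $v_i$ on the set of zero-row indices.

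Concretely, let $Z\subseteq[\ell]$ denote the set of zero-rows of $M$, so $|Z|\ge e^{-d}n/2$ by hypothesis. For every $i\in Z$ we have $((M-zI_{\ell\times m})v)_i=-zv_i$, hence
\[
|z|^2\sum_{i\in Z}|v_i|^2\le\snorm{(M-zI_{\ell\times m})v}_2^2\le\theta^2|z|^2,
\]
which (since $z\neq 0$) yields $\sum_{i\in Z}|v_i|^2\le\theta^2$.

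The final step is a Markov-style count: the number of $i\in Z$ with $|v_i|>2e^{d/2}\theta n^{-1/2}$ is at most $\theta^2/(4e^d\theta^2/n)=n/(4e^d)$. Subtracting from $|Z|\ge e^{-d}n/2$ leaves at least $e^{-d}n/2-e^{-d}n/4=e^{-d}n/4$ indices $i\in Z$ with $|v_i|\le 2e^{d/2}\theta n^{-1/2}$, as required.

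There is no real obstacle here; the only minor point to verify is that $I_{\ell\times m}$ still contributes $-zv_i$ in the $i$-th row for all $i\in[\ell]$ in both the square ($\ell=m$) and near-square ($\ell=m-1$) cases, which is immediate from the natural convention that $I_{\ell\times m}$ is the top $\ell$ rows of $I_m$. Everything else is a two-line $\ell^2$ estimate, so the lemma is essentially a direct consequence of the zero-row structure and Cauchy–Schwarz / Markov.
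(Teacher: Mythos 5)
Your proof is correct and follows exactly the same route as the paper: identify the zero rows, observe that each such row $i$ forces $((M-zI_{\ell\times m})v)_i = -zv_i$, sum the squares to get $\sum_{i\in Z}|v_i|^2\le\theta^2$, and finish with Markov's inequality. The paper is terser but the content is the same.
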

\begin{proof}
Note that for any index $i\in[\ell]$ such that $\deg_M^-(i) = 0$, we have 
\[((M-zI_{\ell\times m})v)_i = -zv_i.\]
Therefore since $\snorm{(M-zI_{\ell\times m})v}\le \theta |z|$, we have 
\[\sum_{\deg_M^-(i) = 0}|zv_i|^2 \le \theta^2|z|^2.\]
Applying Markov's inequality we derive the desired conclusion. 
\end{proof}

\subsection{Unstructured almost-kernel vectors for the first epoch}
We are now in position to prove Proposition~\ref{prop:unstructured-main-1}.
\begin{proof}[Proof of Proposition~\ref{prop:unstructured-main-1}]
As $M\in \mc{D}$, we have that $M$ has at least $e^{-d}n/2$ vertices with out-degree zero. Therefore by Lemma~\ref{lem:large-zero-level-set}, we have that 
\[\big|\{|v_i|\le 2e^{d/2}|z|^{-1}\exp(-C'(\log(2n/k))^6)/\sqrt{n}\}\big|\ge e^{-d}n/4.\]
Now taking $C'$ sufficiently large, we have, for all $|\theta|\ge 2\exp(-C'(\log(2n/k))^{7})/\sqrt{n}$, that 
\[\Big|\Big\{ i\colon|v_i-\theta|\le n^{-1/2} \cdot \exp(-C'(\log(2n/k))^7) \Big\}\Big| \ge e^{-d}n/4.\]
Thus it suffices to prove that 
\[\Big|\Big\{ i\colon|v_i|\le4 n^{-1/2} \cdot \exp(-C'(\log(2n/k))^6)\Big\}\Big|\le (1-c)n.\]

By assumption, we have that $v_k^{\ast}\ge (k/t)^2/\sqrt{t}$. We claim that it suffices to prove for all $k'\in [k,cn]$ that 
\[v_{k'+g(k')}^{\ast}\ge v_{k'}^{\ast} \cdot \left( \frac{k'}{d n \min(|z|,|z|^{-1})} \right)^2.\]
This immediately implies the desired result since then
\[v_{cn}^\ast\ge v_k^{\ast}\cdot \prod_{i=1}^{\tau}\left( \frac{k_i}{d n \min(|z|,|z|^{-1})} \right)^2\ge \exp(-C'\log(n/k)^5)/\sqrt{n},\]
choosing $C'$ sufficiently large and defining $k_i,\tau$ with bounds as in Lemma~\ref{lem:iteration}.

To prove the claim, suppose that 
\[v_{k'+g(k')}^{\ast}<v_{k'}^{\ast} \cdot \left( \frac{k'}{d n \min(|z|,|z|^{-1})} \right)^2\]
and let $k'$ is chosen to be minimal such value in $[k,cn]$. Let $S$ denote the indices of the largest $k'$ coordinates of $v$ and $S'$ denote the indices of the coordinates with magnitude from the $(k'+1)$st largest to the $(k' + g(k'))$th largest. By Observation~\ref{obs:NU-obs}, for all $j\in U(S)$ we have
\[|((M-zI)v_S)_j|\ge v_{k'}^{\ast}\min(|z|,1).\]
Since $M\in\mc{U}((\log n)^{3/2})$, we have $|U(S)|\ge \alpha(k')k'$. Since $M\in\mc{D}$, by the second item of Definition~\ref{def:deg-seq}, we have that there are at most 
\[16(d + \log(m/g(k')))g(k')\le \alpha(k')k'/4\]
neighbors of $S'$. Furthermore by the second item of Definition~\ref{def:deg-seq}, for $t\ge 32d$ there are at most $\exp(-t/32)n$ vertices of in-degree larger than $t$. In particular, there are at most $\alpha(k')k'/4$ of in-degree larger than $n/k'$ assuming that $c$ is a sufficiently small function of $d$.

Therefore, define a row index $j$ in $U(S)$ to be suitable if it has in-degree bounded by $n/k'$ and is not adjacent to any column index in $S'$. Note we have proven that there are at least $\alpha(k')k'/2$ suitable indices. For each suitable index $j$, we have that 
\begin{align*}
|((M-zI)v)_j| &= |((M-zI)(v_S + v_{S'} + v_{([t]\setminus (S\cup S'))})_j|\\
&= |((M-zI)(v_S + v_{([t]\setminus (S\cup S'))})_j|\\
&\ge v_{k'}^{\ast}\min(|z|,1) - \bigg(\frac{n}{k'} + |z|\bigg)v_{k'+g(k')}^{\ast}\ge v_{k'}^{\ast}\min(|z|,1)/2.
\end{align*}
This gives us a contradiction as $v_{k'}^{\ast}\ge \exp(-C'(\log(2n/k))^5)/\sqrt{n}$ and therefore
\begin{align*}
\snorm{((M-zI)v)}_2&\ge \big(\alpha(k')k'/2\big)^{1/2}\cdot \exp(-C'(\log(2n/k))^5)\min(|z|,1)/(2\sqrt{n})\\
&\ge \exp(-C'\log(n/k)^6)
\end{align*}
since $C'$ is sufficiently large as a function of $d$ and $z$.
\end{proof}

\subsection{Unstructured almost-kernel vectors for the second epoch}
In the second epoch, we split our analysis into two cases. For large support almost-kernel vectors, we use a similar argument to the proof of Proposition~\ref{prop:unstructured-main-1}. On the other hand, small support almost-kernel vectors must essentially be kernel vectors of circulant matrices, which we can explicitly handle using the following Lemma~\ref{lem:lsv-explicit}.
\begin{lemma}\label{lem:lsv-explicit}
Let $Y$ be the $s\times s$ matrix where $Y_{ij} = 1$ if and only if $i \equiv j + 1 \pmod{s}$. Then 
\[\sigma_s(Y-zI_s)\ge|z^s-1|/(|z|+1)^{s-1}.\]
\end{lemma}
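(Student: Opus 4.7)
The plan is to use the classical lower bound $\sigma_s(M) \ge |\det M|/\|M\|^{s-1}$ (valid for any invertible $s \times s$ matrix $M$), applied to $M = Y - zI_s$. This reduces the lemma to two easy computations: an exact evaluation of $|\det(Y - zI_s)|$, and an upper bound on the operator norm $\|Y - zI_s\|$.

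First I would compute the determinant. Since $Y$ is the cyclic shift permutation matrix, it is unitary with spectrum exactly the $s$-th roots of unity $\{\omega^k : k = 0, 1, \ldots, s-1\}$, where $\omega = e^{2\pi i/s}$. Therefore
\[ \det(Y - zI_s) = \prod_{k=0}^{s-1}(\omega^k - z) = (-1)^s \prod_{k=0}^{s-1}(z - \omega^k) = (-1)^s(z^s - 1), \]
so $|\det(Y - zI_s)| = |z^s - 1|$. Next I would bound the operator norm using the triangle inequality: since $Y$ is unitary, $\|Y\|_{\mr{op}} = 1$, and thus
\[ \|Y - zI_s\|_{\mr{op}} \le \|Y\|_{\mr{op}} + |z|\cdot\|I_s\|_{\mr{op}} = 1 + |z|. \]

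Finally I would combine these. Writing the singular values as $\sigma_1(Y - zI_s) \ge \cdots \ge \sigma_s(Y - zI_s)$, the identity $\prod_{i=1}^s \sigma_i(M) = |\det M|$ gives
\[ \sigma_s(Y - zI_s) = \frac{|\det(Y - zI_s)|}{\prod_{i=1}^{s-1}\sigma_i(Y - zI_s)} \ge \frac{|z^s - 1|}{\|Y - zI_s\|_{\mr{op}}^{s-1}} \ge \frac{|z^s - 1|}{(|z| + 1)^{s-1}}, \]
which is the desired bound. (If $z^s = 1$ the right side is zero and the inequality is trivial; otherwise $Y - zI_s$ is invertible and the identity $\prod_i \sigma_i = |\det|$ is legitimate.) There is no real obstacle here — this is a standard singular value inequality combined with the explicit spectrum of the cyclic shift.
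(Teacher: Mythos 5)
Your proposal is correct and follows essentially the same approach as the paper: compute $|\det(Y-zI_s)|=|z^s-1|$, bound $\sigma_1(Y-zI_s)\le |z|+1$, and divide using $\prod_i\sigma_i=|\det|$. The paper phrases the first step as $\det((Y-zI_s)^\dagger(Y-zI_s))=|z^s-1|^2$ rather than diagonalizing $Y$, but this is only a cosmetic difference.
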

\begin{proof}
Notice that by direct computation that 
\[\det((Y-zI_s)^{\dagger}(Y-zI_s)) = |z^s-1|^2\]
and that
\[\sigma_1(Y-zI_s)\le|z| + 1.\]
The desired result follows from 
\[\det((Y-zI_s)^{\dagger}(Y-zI_s)) \le\sigma_s(Y-zI_s)^2 \cdot\sigma_1(Y-zI_s)^{2(s-1)}\]
and dividing.
\end{proof}

\begin{proof}[Proof of Proposition~\ref{prop:unstructured-main-3}]
It suffices to prove that $v_{k'+1}^{\ast}\ge v_{k'}^{\ast}/(C'n)$ for $k'\le(\log\log n)^3$. This is sufficient since after this point a modification of the argument in Proposition~\ref{prop:unstructured-main-1} easily completes the proof.

Suppose that $v_{k'+1}^{\ast}\le v_{k'}^{\ast}/(C'n)$ for a minimal such $k'\le(\log\log n)^3$, and note that since $v$ is a unit vector we have $v_1^{\ast}\ge 1/\sqrt{n}$. Let $S$ be the set of indices corresponding to the $k'$ largest coordinates. Furthermore notice that the in- and out-degree of every vertex is bounded by $(\log n)^2$ since $M\in\mc{D}$. If $|U(S)|\ge 1$, for any $j\in U(S)$ we have
\[|((M-zI_{(t-1)\times t})v)_j|\ge v_{k'}^{\ast}\cdot \min(|z|,1) - ((\log n)^2 + |z|)v_{k' + 1}^{\ast}\ge v_{k'}^{\ast}\cdot \min(|z|,1)/2.\]
Notice that $v_{k'}^{\ast}\ge\exp(-C'(\log n)^2)$ which provides the desired contradiction in this case.

Therefore we may assume that $U(S)=\emptyset$. By $\mc{U}^{\ast}$, we deduce that $t\notin S$. Furthermore, since $t\notin S$ and $U(S) = \emptyset$, every vertex in $S$ has at least $1$ in-neighbor from $S$. By the final condition of $M\in\mc{D}$, we have
\[\sum_{i,j\in S}M_{ij}\le|S|\]
and thus every vertex in $S$ has exactly $1$ in-neighbor from $S$. We refine $S$ as follows. If there is no vertex in the current set with out-degree zero terminate; else remove a vertex of out-degree zero and iterate. This process terminates with a set $T$ in which the induced directed subgraph is exactly a collection of cycles (possibly of length $1$). Furthermore for any vertex in $T$, we have that it has no in-neighbor from $S\setminus T$. Adding $-z$ to the diagonal entries of the adjacency matrix of the induced digraph $M[T]$, we get a disjoint collection of circulant matrices of exactly the form in Lemma~\ref{lem:lsv-explicit}. This argument uses crucially that $t\notin S$ and therefore $t\notin T$.

Finally, applying Lemma~\ref{lem:lsv-explicit} we have, writing $s=(\log\log n)^3$,
\begin{align*}
\snorm{(M-zI_{(t-1)\times t})v}_2 &\ge \snorm{((M-zI_{(t-1)\times t})v)_T}_2 \\
& = \snorm{((M-zI_{(t-1)\times t})v_S)_T}_2 - \sqrt{n}((\log n)^2 + |z|)v_{k'+1}^{\ast}\\ 
& = \snorm{((M-zI_{(t-1)\times t})v_T)_T}_2 - \sqrt{n}((\log n)^2 + |z|)v_{k'+1}^{\ast}\\ 
& \ge \snorm{v_T}_2 \cdot\min\{|z^t-1|/(|z|+1)^{t-1}\colon t\in[s]\}-\sqrt{n}((\log n)^2 + |z|)v_{k'+1}^{\ast}\\ 
& \ge v_{k'}^{\ast} \cdot ||z|-1|/(|z|+1)^{s-1}-\sqrt{n}((\log n)^2 + |z|)v_{k'+1}^{\ast}\\ 
& \ge v_{k'}^{\ast} \cdot ||z|-1|\exp(-C'(\log\log n)^{4})-\sqrt{n}((\log n)^2 + |z|)v_{k'+1}^{\ast}\\
&\ge \exp(-C'(\log n)^2)
\end{align*}
given that $C'$ is a sufficiently large function of $z$ and $d$. This contradicts the assumption in the lemma statement and completes the proof.
\end{proof}

\section{Anticoncentration of the projection}\label{sec:anticoncentration}
In this section we use the ``spreadness'' estimates which we proved in the previous section to show anticoncentration of the projection of a random row vector onto the bottom $r$ singular directions, a step which we discussed in and around \eqref{eq:ker-anti-concentration} in Section~\ref{sec:sketch-spreadness}. 

To set up the main results of this section, we define the following parameters: a constant $K = K(d,z)$, which we  later choose to be sufficiently large, and 
\[\eps_r = \exp(-K(\log(n/r))^9).\]
We will now consider the result of adding in vertex $v_t$ in our walk. From now on we will use the common abuse of writing $e_t$ for the column identity vector which is supported only on the index corresponding to $v_t$. (In fact, we may relabel so that $v_t$ is labeled $t$ if we so desire, since conjugation by a permutation matrix does not change the spectrum nor the singular values.) 

The following two lemmas can be seen as rigorous formulations of \eqref{eq:ker-anti-concentration}, in the first and second epochs, respectively. To state our result on the first epoch, we recall the notation $T_1 = [\lfloor n(1-\eps)\rfloor]$ from \eqref{eq:T1T2T3}.

We also use the following notation in this section. For a matrix $M$ and integer $r$, let $P_{r,M}$ denote the projection onto the smallest $r$ right-singular vectors of $M$. 

\begin{lemma}\label{lem:proj-anti-epoch1} 
Let $z\neq 0$, $m+1\leq t \leq n-\ell$ and $r\ge (\log n)^{5/3}$. Let $B_{t-1}^\dagger,B_{t-1} \in \mc{U}((\log n)^{3/2})\cap\mc{D}$ be such that\footnote{Note that this event is $\mc{F}_t'$-measurable.}
\[\min\big\{ \deg_{B_{n-\ell}}^+(v_t,T_1),\, \deg_{B_{n-\ell}}^-(v_t,T_1) \big\} \ge \sqrt{\log(1/\eps)}.\]

 If $\sigma_{(t-1)-r/2}(B_{t-1}-zI_{t-1,t-1})\le \eps_r$ then, for all $u\in \mb{C}^{t-1}$,
\[ \PP\big( \|P_{r,B_{t-1}^{\dagger} - \ol{z}I_{t-1}}(B_{t}^{\ast}e_{t} + u)\|_2 < \eps_r \big| \mc{F}_t' \big) \le C(\log(1/\eps))^{-1/4}, \]
where $C = C(d)>0$ is a constant depending only on $d$.

Furthermore if $\sigma_{(t-1)-r/2}(B_{t-1}-zI_{t-1,t-1})\le \eps_r$, then for all $u\in \mb{C}^{t}$,
\[ \PP\big( \|P_{r,B_{t}^{\ast} - zI_{(t-1)\times t}}(B_{t}^{\dagger}e_{t} + u)\|_2 < \eps_r | \mc{F}_t', B_{t}^{\ast}\big) \le C(\log(1/\eps))^{-1/4},\]
where $C = C(d) >0$ is a constant depending only on $d$.
\end{lemma}

We now state our result on anticoncentration of projections for the second epoch of our process. 

\begin{lemma}\label{lem:proj-anti-epoch2} 
Let $z \in \mb{C}$ satisfy $|z|\neq 0, 1$, and let $n-\ell+1\le t\le n$. 

If $B_{t-1}$ is such that $\sigma_{t-1}(B_{t-1} - z I_{t-1})\le \eps_1$ and $B_{t-1}^{\dagger}\in \mc{U}((\log\log n)^2) \cap \mc{D}$, then for all $u\in\mb{C}^{t}$,
\[ \PP\big( \|P_{1,B_{t-1}^{\dagger} - \ol{z}I_{t-1}}(B_{t}^{\ast}e_{t} + u)\|_2 < \eps_1 \big| \mc{F}_t \big) \le C(\log n)^{-1/4}. \]
Furthermore, if $B_{t}^{\ast}\in \mc{U}((\log\log n)^2) \cap \mc{U}^{\ast}$ and $B_{t-1}^{\dagger}\in \mc{D}$ then for all $u\in\mb{C}^t$,
\[ \PP\big( \|P_{1,B_{t}^{\ast} - zI_{(t-1)\times t}}(B_{t}^{\dagger}e_{t} + u)\|_2 < \eps_1 \big| \mc{F}_t, B_{t}^{\ast}\big) \le C(\log n)^{-1/4}, \]
where $C = C(d)>0$ is a constant depending only on $d$.
\end{lemma}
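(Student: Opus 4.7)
The plan is to deduce both estimates from two ingredients: the spreadness of the bottom right singular vector, furnished by Propositions~\ref{prop:unstructured-main-2} and~\ref{prop:unstructured-main-3}, and a Bernoulli small-ball (Littlewood--Offord) bound applied to the new random column or row.

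For the first inequality, let $v \in \mb{C}^{t-1}$ be a unit bottom right singular vector of $M := B_{t-1}^\dagger - \bar{z}I_{t-1}$; its singular value equals $\sigma_{t-1}(B_{t-1} - zI_{t-1}) \le \eps_1$. Choosing $K = K(d,z)$ in the definition of $\eps_1$ large relative to the constant $C'$ of Proposition~\ref{prop:unstructured-main-2} gives $\|Mv\|_2 \le \exp(-C'(\log n)^6)$, and the hypotheses $B_{t-1}^\dagger \in \mc{U}((\log\log n)^2) \cap \mc{D}$ then let us invoke Proposition~\ref{prop:unstructured-main-2}, yielding $v^\ast_{\lfloor cn \rfloor} \ge \exp(-C'(\log n)^7)/\sqrt{n} =: \delta_0$. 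Since $P_{1,M}$ is the rank-one projection onto $v$, the quantity to bound equals $|\langle v, X \rangle - u'|$ for $X := B_t^\ast e_t$ and some $u' \in \mb{C}$. In the second epoch $v_t = t$ is deterministic so $\mc{F}_t = \mc{F}_t'$, and since $\mc{F}_m$ reveals no marginal constraint for vertices in $T_3$, Fact~\ref{fact:row-random} yields that, conditional on $\mc{F}_t$, the entries of $X$ are iid $\Ber(\sqrt{\log n}/n)$. Let $I = \{i : |v_i| \ge \delta_0\}$, so $|I| \ge cn$. Enlarging $K$ so that also $\eps_1 \le \delta_0$, applying Esseen's inequality to the characteristic function $\phi_Y(s) = \prod_i(1 - p + pe^{isv_i})$ and using $|1 - p + pe^{i\theta}|^2 \le \exp(-4p(1-p)\sin^2(\theta/2))$ to integrate over $|s| \le 1/\eps_1$ gives
\[ \sup_{u' \in \mb{C}} \PP\bigl(|\langle v, X\rangle - u'| \le \eps_1 \,\big|\, \mc{F}_t\bigr) \le \frac{C}{\sqrt{|I|\,p(1-p)}} \le \frac{C''}{(\log n)^{1/4}}, \]
where the last inequality uses $|I| \ge cn$ and $p(1-p) \asymp \sqrt{\log n}/n$.

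The second inequality follows the same template. Take $v \in \mb{C}^t$ to be a unit kernel vector of $M := B_t^\ast - zI_{(t-1)\times t}$, which exists since $M$ has shape $(t-1)\times t$. The spreadness $v^\ast_{\lfloor cn\rfloor} \ge \delta_0$ now comes from Proposition~\ref{prop:unstructured-main-3}, whose hypotheses $B_t^\ast \in \mc{U}((\log\log n)^2) \cap \mc{U}^\ast$ are assumed and whose membership-in-$\mc{D}$ requirement is inherited from $B_{t-1}^\dagger \in \mc{D}$ up to the negligible effect of the single extra column; this is also where the hypothesis $|z| \ne 1$ is used. The random vector $B_t^\dagger e_t$ is the new row $t$ of $B_t$, independent of $B_t^\ast$ given $\mc{F}_t$, with entries iid $\Ber(\sqrt{\log n}/n)$ by the same appeal to Fact~\ref{fact:row-random}. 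The identical Bernoulli small-ball bound then closes the argument.

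The main obstacle is the anticoncentration step in this extremely sparse regime: the precise quantitative balance $p(1-p) \asymp \sqrt{\log n}/n$ with $|I| = \Theta(n)$ produces exactly the claimed rate $(\log n)^{-1/4}$, whereas a cruder approach via symmetrization to Rademacher variables and Kolmogorov--Rogozin would only yield $(\log n)^{-1/8}$, so one must work directly with the asymmetric Bernoulli characteristic function. A secondary bookkeeping obstacle is verifying that conditioning on $\mc{F}_t$ (rather than a finer $\sigma$-algebra including sum constraints) truly produces iid Bernoulli entries in the added row or column, which rests on the fact that in the second epoch $v_t = t$ is deterministic and lies in $T_3$, whose marginal degrees are never revealed in $\mc{F}_m$.
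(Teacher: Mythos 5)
Your overall strategy matches the paper's: extract the spreadness of the bottom right-singular vector from Propositions~\ref{prop:unstructured-main-2} and~\ref{prop:unstructured-main-3}, note that $P_{1,M}$ is rank one so the projection bound reduces to a scalar small-ball estimate for $\sang{v,X}$, and then anticoncentrate using the independence and Bernoulli distribution of the new row or column. Two of your claims deserve comment.

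First, your appeal to Fact~\ref{fact:row-random} for the iid-Bernoulli conclusion in the second epoch is the wrong citation. That fact is the ``slice'' statement for the first epoch: it concerns only the $T_1$ coordinates of the new row/column, gives a uniform law conditional on a degree sum, and relies on that sum being $\mc{F}_j'$-measurable, which in turn uses $v_j\in H\subseteq T_2$. In the second epoch $v_t=t\in T_3$, its degree is not revealed in $\mc{F}_m$, and what one actually needs (and the paper invokes) is Fact~\ref{fact:end-ind}: the entries $B_{ij}$ with $\max(i,j)\ge n-\ell+1$ are jointly independent of $B_s,B_s^\ast$ for $s\le n-\ell$ and have law $\Ber(\sqrt{\log n}/n)$ by the definition of $B$, so the full added row or column is iid $\Ber(\sqrt{\log n}/n)$ independent of $\mc{F}_t$ and $B_t^\ast$.

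Second, the paper does not need Esseen; it applies Lemma~\ref{lem:LKR} (the Kolmogorov--L\'evy--Rogozin bound, stated for arbitrary independent real or complex random variables, no symmetry required) directly to $\xi_j = w_j\delta_j$ with $r=\gamma$. For $|w_j|>2\gamma$ one has $1-\mc{L}(\xi_j,\gamma)\ge p = \sqrt{\log n}/n$, hence $\sum_j(1-\mc{L}(\xi_j,\gamma))\gtrsim cn\cdot\sqrt{\log n}/n\asymp\sqrt{\log n}$ and the L\'evy concentration is $\lesssim(\log n)^{-1/4}$ with no loss. The ``$(\log n)^{-1/8}$'' you mention would only arise from the cruder squaring symmetrization $\mc{L}(Y,t)^2\le\PP(|Y-Y''|\le 2t)$; the Rogozin form in Lemma~\ref{lem:LKR} avoids that entirely. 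Your Esseen computation yields the same rate and is a legitimate alternative, but the stated ``main obstacle'' is not one.
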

\begin{remark}
Observe that in the second part of Lemma~\ref{lem:proj-anti-epoch2}, we will not require a condition on the least singular value of $B_{t}^{\ast}-zI_{(t-1),t}$. This is due to the fact that the matrix is rectangular and therefore is forced to have a kernel vector
\end{remark}


\subsection{Some basic anticoncentration estimates}\label{sec:anti-concentration-estimates}
Before we proceed with the proof of our two main results of this section, Lemmas~\ref{lem:proj-anti-epoch1} and \ref{lem:proj-anti-epoch2}, we lay out a few tools that we will need. 

We first define the classical L\'evy concentration function. For a (real or complex) random variable $\Gamma$, we define 
\[\mc{L}(\Gamma,t) = \sup_{z\in \mb{C}}\mb{P}(|\Gamma-z|\le t).\]

We will also require the following classical anticoncentration inequality due to Kolmogorov, L\'evy and Rogozin \cite{Kol58, Rog61} (see e.g. \cite[Lemma~3.2]{RT19}). 
\begin{lemma}\label{lem:LKR}
Let $\xi_1,\ldots,\xi_{n}$ be independent real or complex random variables. Then, for any real numbers $r > 0$, we have
\[ \mc{L}\bigg(\sum_{i=1}^{n}\xi_i, r\bigg) \le C\bigg(\sum_{i=1}^{n}\big( 1-\mc{L}(\xi_i, r)\big)\bigg)^{-1/2},\]
where $C>0$ is an absolute constant. 
\end{lemma}

We will also require the following ``slice'' anticoncentration inequality; the proof is essentially a quantitative version of the \cite[Lemma~4.2]{FKS22}.

\begin{lemma}\label{lem:slice-anti}
For $1\leq m\leq n/2$, sample $\xi\in \{0,1\}^n$ uniformly among all such vectors with $\sum_{i=1}^{n}\xi_i = m$.
For $\delta, \gamma > 0$, let $v = (v_1,\ldots,v_n) \in \mb{C}^{n}$ be such that 
\[\sup_{\theta\in \mb{C}}\big|\big\{i\colon|v_i-\theta|\le \delta\big\}\big| \le (1-\gamma)n.\]
Then
\[\mc{L}\Big(\sum_{i=1}^{n}\xi_iv_i, \delta\bigg) \le C\Big((\gamma m)^{-1/2} + \exp(-C^{-1}\gamma^2 m)\Big),\]
where $C>0$ is an absolute constant. 
\end{lemma}
\begin{proof}
Let $\pi$ be a uniformly random
injective function $\{1,\ldots,2m\}\to\{1,\ldots,n\} $
and let $\mbf{x}=(x_{1},\ldots,x_{m})$ be
a sequence of independent $\mr{Ber}(1/2)$ random variables. Then choose the positions of the $m$ ones in $\xi$
as follows. For each $i\in\{1,\ldots,m\} $, if $x_i=0$
we set $\xi_{\pi(i)}=1$ and $\xi_{\pi(i+m)}=0$, and if $x_i=1$ let $\xi_{\pi(i+m)}=1$ and $\xi_{\pi(i)}=0$. All other indices of $\xi$ are set to $0$. It is clear that $\xi$ has the correct distribution.

We call an index $i$ separated if 
\[|v_{\pi(i)}-v_{\pi(i+m)}|\ge\delta\]
and let $Y$ denote the number of separated indices, which is dependent only on $\pi$. Note that 
\[\big|\big\{(i,j)\colon|v_i-v_j|\ge\delta\big\}\big| \ge \gamma n^2/2,\]
else there exists an index $i$ such that 
\[\big|\big\{j\colon|v_i-v_j|\le\delta\} \ge (1-\gamma)n\big\}\big|\]
and taking $\theta = v_i$ we have violated our assumption. Thus $\mb{E}Y \gtrsim \gamma m$ and by applying Lemma~\ref{lem:injection-concentration} we easily see $\mb{P}(Y\le \mb{E}Y/2)\le \exp(-\Omega(\gamma^2m))$.

Therefore, 
\begin{align*}
&\sup_{\theta\in\mb{C}} \mb{P}\Big(\Big|\sum_{i=1}^n\xi_iv_i-\theta\Big|\le \delta\Big) \\
&\le \mb{P}(Y\le \mb{E}Y/2) + \sup_{\theta, \pi} \mbm{1}(Y\ge \mb{E}Y/2) \cdot \mb{P}\Big(\Big|\sum_{i=1}^m\xi_{\pi(i)}v_{\pi(i)} +\xi_{\pi(i+m)}v_{\pi(i+m)} -\theta\Big|\le \delta \, \Big|\, \pi\Big) \\
&\le \mb{P}(Y\le \mb{E}Y/2) + \sup_{\theta, \pi} \mbm{1}(Y\ge\mb{E}Y/2) \cdot \mb{P}\Big(\Big|\sum_{i=1}^m(v_{\pi(i)}+x_i(v_{\pi(i+m)} - v_{\pi(i)}))-\theta\Big|\le \delta\,  \Big|\, \pi\Big) \\
&\lesssim \exp(-\Omega(\gamma^2m)) + (\gamma m)^{-1/2}
\end{align*}
where we have used that $(x_i)_{i\in[m]}$ is distributed as $\mr{Ber}(1/2)^{\otimes m}$ given $\pi$ and then applied Lemma~\ref{lem:LKR} to the set of separated $i$ where $|v_{\pi(i+m)}-v_{\pi(i)}|\ge\delta$ (i.e., those counted by $Y\ge\mb{E}Y/2\gtrsim\gamma m$).
\end{proof}

\vspace{1em}

We also require the following result that provides us with a relatively spread basis of an arbitrary subspace. The following lemma is due to Litvak, Lytova, Tikhomirov, Tomczak-Jaegermann, and Youssef \cite[Lemma~4.3]{LLTTY21}. 

\begin{lemma}\label{lem:basis}
Let $V \subseteq \C^n$ be a $k$-dimensional $\C$-vector space. There exists an orthonormal basis $B$ of $V$ so that for all $v \in B$, we have $v^{\ast}_{ck} \geq  ck^{1/2}n^{-1}$, where $c>0$ is an absolute constant. 
\end{lemma}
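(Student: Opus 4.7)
The plan is to exhibit the desired basis via a Haar-random change of basis, combining moment computations on the complex sphere with Lipschitz/Gaussian concentration and a union bound over the $k$ basis vectors. Let $P = P_V$ denote the orthogonal projection onto $V$ and set $p_j = \snorm{P e_j}_2^2$, so $\sum_{j=1}^n p_j = \tr P = k$ and $p_j \in [0,1]$. The set of ``heavy'' coordinates $J = \{j \in [n] : p_j \geq k/(2n)\}$ satisfies $|J| \geq k/2$, since $\sum_{j \notin J} p_j < n \cdot k/(2n) = k/2$ and each $p_j \leq 1$. Fix any orthonormal basis matrix $U \in \C^{n \times k}$ for $V$, with rows $r_j \in \C^k$ (so $\snorm{r_j}_2^2 = p_j$), and let $G$ be Haar-distributed on $U(k)$. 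The columns $v_i = U G e_i$ form a random orthonormal basis of $V$; marginally $v_i = U\alpha$ with $\alpha = G e_i$ uniform on $S(\C^k)$, so $|(v_i)_j| = |\sang{r_j, \alpha}|$.

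For each $i \in [k]$ I would track the quadratic quantity $L_i = \sum_{j \in J} |\sang{r_j, \alpha}|^2$ and the quartic quantity $Q_i = \sum_{j=1}^n |\sang{r_j, \alpha}|^4$. Since $|\sang{\hat u, \alpha}|^2 \sim \operatorname{Beta}(1, k-1)$ for any fixed unit $\hat u \in \C^k$, we have $\mb{E} L_i = (1/k) \sum_{j \in J} p_j \geq 1/2$ and $\mb{E} Q_i = (2/(k(k+1))) \sum_j p_j^2 \leq 2/(k+1)$. The quantity $L_i$ is the quadratic form $\alpha^\ast A_J \alpha$ with $A_J = \sum_{j \in J} r_j r_j^\ast \preceq I_k$, hence is $2$-Lipschitz on $S(\C^k)$; Levy's concentration on the sphere (of real dimension $2k-1$) yields $\mb{P}(L_i < 1/4) \leq 2 \exp(-c k)$ for an absolute $c > 0$. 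For $Q_i$ I would pass to the Gaussian representation $\alpha = z/\snorm{z}_2$ with $z$ a standard complex Gaussian in $\C^k$; the tight-frame identity $\sum_j r_j r_j^\ast = I_k$ bounds $\mb{E} \sum_j |\sang{r_j, z}|^4 \leq 2k$, and concentration of this fourth-degree Gaussian polynomial, combined with chi-square concentration of $\snorm{z}_2^2$ around $k$, yields $\mb{P}(Q_i > C/k) \leq 1/(2k^2)$ for an absolute constant $C$.

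A union bound over $i \in [k]$ then secures $L_i \geq 1/4$ and $Q_i \leq C/k$ simultaneously for every $i$ with probability at least $1/2$, provided $k$ exceeds some absolute $k_0$ (the small-$k$ case is trivial: $\snorm{v}_\infty \geq 1/\sqrt{n}$ already dominates $c k^{1/2}/n$ for $k$ bounded and $c$ small). On this good event, fix a small absolute constant $c_1 > 0$ and let $N_i = |\{j \in J : |(v_i)_j| \geq c_1 k^{1/2}/n\}|$. Splitting $L_i$ at the threshold $c_1 k^{1/2}/n$, the sub-threshold contribution is bounded by $|J| \cdot c_1^2 k/n^2 \leq c_1^2 \leq 1/8$, while Cauchy--Schwarz bounds the super-threshold contribution by $\sqrt{N_i Q_i}$. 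Rearranging gives $N_i \geq (1/8)^2 / Q_i \geq k/(64 C)$, so that with $c = \min(c_1, 1/(64C))$, every basis vector $v_i$ has at least $ck$ coordinates of magnitude at least $c k^{1/2}/n$, which is the desired conclusion.

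The main obstacle will be the concentration of $Q_i$. Unlike $L_i$, the quartic $Q_i$ does not admit a useful Lipschitz constant on the sphere, so direct Levy concentration fails; instead one must work in the Gaussian model and invoke suitable tail bounds for fourth-degree Gaussian polynomials, leveraging the tight-frame identity both to bound the mean and to control the fluctuations. The remaining steps -- the Beta moment computations, Lipschitz concentration of $L_i$, the Cauchy--Schwarz splitting, and the union bound -- are routine once the $Q_i$ tail is in hand.
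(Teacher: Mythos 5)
The paper does not prove this lemma; it quotes it from \cite[Lemma~4.3]{LLTTY21}, so there is no internal proof to compare against. Your Haar-rotation strategy is the natural route and is very likely the same in spirit as the cited reference: rotate by a Haar-random $G\in U(k)$, show each column of $UG$ has controlled $\ell^4$ mass using moment computations on the complex sphere, and union bound over $i\in[k]$. The moment computations, the Cauchy--Schwarz splitting at the threshold, and the small-$k$ reduction are all correct. Two remarks. First, the $L_i$/$J$ apparatus is superfluous: since $v_i$ is a unit vector you may split the deterministic identity $\snorm{v_i}_2^2=1$ at threshold $c_1k^{1/2}/n$ (subthreshold mass at most $n\cdot c_1^2k/n^2\le c_1^2$) and run the same Cauchy--Schwarz step, so the L\'evy-concentration paragraph for $L_i$ and the definition of $J$ can both be deleted; only the $Q_i$ tail bound is actually needed.

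Second---and this is the one genuine gap, which you correctly flag---you still need to prove something like $\PP(Q_i>C/k)\le 1/(4k)$, and this is really the content of the lemma. It does go through. Passing to $\widetilde Q=\sum_j|\sang{r_j,z}|^4$ with $z$ a standard complex Gaussian in $\C^k$, a Wick/Isserlis computation gives, for unit vectors $u,u'$ with $\rho=\sang{u,u'}$, $\Cov(|\sang{u,z}|^4,|\sang{u',z}|^4)=16|\rho|^2+4|\rho|^4\le 20|\rho|^2$, whence
\[
\Var(\widetilde Q)\le 20\sum_{j,j'}p_j^2p_{j'}^2|\sang{\hat r_j,\hat r_{j'}}|^2 = 20\sum_{j,j'}p_jp_{j'}|\sang{r_j,r_{j'}}|^2\le 20\sum_{j,j'}|\sang{r_j,r_{j'}}|^2 = 20\,\tr\Big(\Big(\sum_jr_jr_j^\dagger\Big)^2\Big)=20k
\]
by the tight-frame identity $\sum_jr_jr_j^\dagger=I_k$. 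Since $\widetilde Q-\EE\widetilde Q$ is a degree-$4$ polynomial in real Gaussians with $\EE\widetilde Q\le 2k$ and variance at most $20k$, hypercontractivity (moment bounds for Gaussian chaos) yields $\PP(\widetilde Q>Ck)\le\exp(-ck^{1/4})$ for a suitable absolute $C$; combined with the chi-square lower tail for $\snorm{z}_2^2$ this is far below $1/(4k)$ once $k\ge k_0$. With that detail supplied your argument is complete.
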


\subsection{The proof of Lemma~\ref{lem:proj-anti-epoch1} and ~\ref{lem:proj-anti-epoch2}}

We are now in position to prove Lemma~\ref{lem:proj-anti-epoch1}.
\begin{proof}[Proof of Lemma~\ref{lem:proj-anti-epoch1}]
We only prove the second item of the lemma; the first item is strictly simpler. Recall that $B_{t}^{\ast}$ is a $(t-1)\times t$ matrix and note that $B_{t}^{\dagger}e_t$ is the Hermitian conjugate of the row added to $B_{t}^{\ast}$ to obtain $B_{t}^{\dagger}$. By assumption we have 
\[\sigma_{(t-1) - r/2}(B_{t-1} - zI_{t-1})\le \eps_r.\] By Cauchy interlacing applied for singular values (see e.g.~Fact~\ref{fact:cauchy-interlacing}) we have that 
\[ \sigma_{t - r/2}(B_{t}^{\ast} - z I_{(t-1)\times t})\le \eps_r.\]

Therefore there exists a vector space $W\subseteq \mb{C}^t$ of dimension $r/2$ so that for all unit vectors $v$ in $W$ we have $\snorm{(B_{t}^{\ast} - z I_{(t-1)\times t})v}_2\le \eps_r$. Let $W' = W\cap \{v\in \mb{C}^t:v_t=0\}$ and note that the dimension of $W'$ is at least $r/2-1\ge r/4$. Letting $\pi\colon(x_1,\ldots,x_t)\to (x_1,\ldots,x_{t-1})$, we have for all unit vectors $v$ in $W'$ that
\[\snorm{(B_{t}^{\ast} - z I_{(t-1)\times t})v}_2 = \snorm{(B_{t-1}-zI_{t-1})\pi(v)}_2\le \eps_r.\] Applying Lemma~\ref{lem:basis}, there exist orthogonal unit vectors $w_1,\ldots,w_{r/4}$ in $W'$ such that 
\[(w_j)^{\ast}_{cr}\geq cr^{1/2}n^{-1}\]
for all $j\le r/4$ and an absolute constant $c>0$. Since \[ \snorm{(B_{t}^{\ast} - z I_{(t-1)\times t})w_j}_2 = \snorm{(B_{t-1}-zI_{t-1})w_j}_2\le \eps_r,\] we may use Proposition~\ref{prop:unstructured-main-1}. We have
\[\sup_{\theta\in\mb{C}}\big|\big\{\, i \, \colon \, |(w_j)_i-\theta|\le n^{-1/2} \cdot \exp(-C'(\log(n/r))^{7})\big\}\big| \le (1-c')n\]
for all $1\le j\le r/4$, where $c'$ is a constant depending only on $d$ while $C'$ is a function of $d$ and $z$.

Note that $B_t^{\dagger}e_t$ given $\mc{F}_t'\cup B_t^{\ast}$ is deterministic on the indices outside of $T_1$ and on the indices $T_1$ is a uniformly random $\{0,1\}$-vectors with a fixed sum by Fact~\ref{fact:row-random}. Furthermore note that since $|T_1| = \lfloor n(1-\eps)\rfloor \ge n(1-c'/4)$ (as $\eps\ll1/d$), we still have that the largest approximate level set of each $w_j$ when restricted to $T_1$ occupies at most a $(1-c'/2)$-fraction.

Note that the fixed sum of $B_t^{\dagger}e_t$ on $T_1$ is at least $\sqrt{\log(1/\eps)}$ by assumption and at most $(\log n)^2$ by the maximum degree assumption implicit in $\mc{D}$. Therefore for any deterministic vector $u$ we have
\begin{align*}
&\mb{P}\big( |\sang{w_k,B_t^{\dagger}e_t + u}|\le \exp(-C'(\log(n/r))^7)n^{-1/2}|\mc{F}_t',B_t \big) \\
&\le\sup_{\theta\in\mb{C}}\mb{P}\big(|\sang{(w_k)_{T_1},(B_t^{\dagger}e_t)_{T_1}}-\theta|\le \exp(-C'(\log(n/r))^7)n^{-1/2}|\mc{F}_t',B_t\big) \lesssim (\log(1/\eps))^{-1/4},
\end{align*}
where the inequality follows from Lemma~\ref{lem:slice-anti} (note that the implicit constant here depends only on $c'$ in the size of the largest level set and hence only on $d$). 

Note that for a set of nonnegative random variables $X_1,\ldots,X_k$, by Markov's inequality
\begin{align*}
\mb{P}\bigg(\sum_{i=1}^{k}X_i\le \tau\bigg) \le \mb{P}\big(|\{i\colon X_i\le 2\tau/k\}|\ge k/2\big)\le \frac{2}{k}\mb{E}\bigg[\sum_{i=1}^{k}\mbm{1}_{X_i\le 2\tau/k}\bigg]\le 2\sup_{i\in[k]}\mb{P}(X_i\le 2\tau/k).
\end{align*}
Therefore
\begin{align*}
\PP\big( \|P_{r,B_{t}^{\ast}-zI_{(t-1)\times t}}(B_{t}^{\dagger}e_{t} + u)\|_2 < \eps_r | \mc{F}_t' \cup B_{t}^{\ast}\big) &\le \PP\bigg( \sum_{k=1}^{r/4}|\sang{w_k,B_{t}^{\dagger}e_{t} + u}|^2 < \eps_r^2 | \mc{F}_t', B_{t}^{\ast}\bigg)\\
&\le \sup_{k}2\PP\big(|\sang{w_k,B_{t}^{\dagger}e_{t} + u}| < \eps_r\cdot (r/8)^{-1/2} | \mc{F}_t',B_{t}^{\ast}\big) \\
&\lesssim (\log(1/\eps))^{-1/4},
\end{align*}
which is precisely the desired result. Here we have used that the constant $K$ defining $\eps_r$ is chosen as a sufficiently large function of $d$ and $z$. 
\end{proof}

\vspace{1em}

We now turn to prove Lemma~\ref{lem:proj-anti-epoch2}; since the randomness in the second epoch is purely independent, the analysis simplifies substantially in this case.
\begin{proof}[Proof of Lemma~\ref{lem:proj-anti-epoch2}]
We will only prove the second case; the first is essentially identical except we use Proposition~\ref{prop:unstructured-main-2}. Let $w$ denote the least singular vector of $B_t^{\ast} - zI_{(t-1)\times t}$; as this is a $(t-1)\times t$ matrix we have $(B_t^{\ast}-zI_{(t-1)\times t})w = 0$. By Proposition~\ref{prop:unstructured-main-3}, we have 
\[w^{\ast}_{cn}\ge \exp(-C'(\log n)^{7})n^{-1/2} =: \gamma\]
with $C' = C'(d,z)>0$ and $c = c(d)>0$. Therefore
\begin{align*}
\PP\big( \|P_{1,B_{t}^{\ast}-zI_{(t-1)\times t}}(B_{t}^{\dagger}e_{t} + u)\|_2 < \eps_1 | \mc{F}_t, B_{t}^{\ast}\big)&=\PP\big(|\sang{w,B_{t}^{\dagger}e_{t} + u}|\le \eps_1| \mc{F}_t, B_{t}^{\ast}\big)\\
&\le \sup_{\theta\in \mb{C}}\PP\big(|\sang{w,B_{t}^{\dagger}e_{t} }-\theta|\le \eps_1 | \mc{F}_t,B_{t}^{\ast}\big)\\
&= \sup_{\theta\in\mb{C}}\PP_{\delta_j\sim\on{Ber}(\sqrt{\log n}/n)}\Big(\Big|\sum_{j=1}^{t}w_j\delta_j - \theta\Big|\le \eps_1 \Big)\\
&\le \frac{1}{\sqrt{\sum_{j=1}^{t}(\sqrt{\log n}/n)\cdot \mbm{1}_{|w_j|\ge \gamma}}}\lesssim (\log n)^{-1/4}
\end{align*}
We have used Fact~\ref{fact:end-ind} to rewrite $\sang{w,B_{t}^{\dagger}e_{t}}$ as a sum of weighted independent random Bernoulli random variables and applied Lemma~\ref{lem:LKR} with $r = \gamma$ and $\xi_i = w_i \delta_i$. The implicit constant in the final inequality comes from the implicit constant on the number of coordinates in $w$ with size larger than $\gamma$, which depends only on $d$ by Proposition~\ref{prop:unstructured-main-3}.
\end{proof}

\section{Random walk lemma}\label{sec:random-walk}
In this short section we state and prove a simple probabilistic lemma on random walks of the type discussed in Section~\ref{sec:sketch-random-walk} of the sketch. In particular, using the notation introduced in that section, we show that if $(X_t)_t$ has sufficient downward drift then $X_n = O(1)$ with high probability. Results of this form originate in work of Costello, Tao, and Vu \cite{CTV06} on singularity of symmetric random matrices, and have been used to study singularity and rank in sparse random matrices \cite{FKSS23,GKSS23}.
\begin{lemma}\label{lem:random-walk}
Let $(X_s)_{s = 1}^k$ be a sequence of random variables and let $(\cF_s)_{s=1}^k$ be a filtration for which $X_s$ is $\cF_s$-measurable. Suppose $X_1\le k/4$,  $X_{i+1}\le X_i+1$, and if $X_t\ge\lfloor(k-t)/8\rfloor$, we have $\mb{P}(X_{t+1}\le X_t-1+\mbm{1}_{X_t=0}|\mc{F}_s)\ge 1-p$. 
Then for $t\ge 1$, we have
\[\mb{P}(X_k\ge t)\le (Cp)^{t/2},\]
where $C>0$ is an absolute constant. 
\end{lemma}
\begin{proof}
We will choose $C$ to be sufficiently large at the end of the proof. We may assume that $p\le C^{-1}$ as otherwise the result is vacuous. Furthermore we may assume that $k\ge 5$; for $k\le 4$ we have $\mb{P}(X_k \ge t)\le \binom{k}{t}p^{t}\le 16p^{t/2}$. 

Define $Y_t=(1/\sqrt{p})^{X_t}-1$. We claim that 
\begin{equation}\label{eq:walk}
\mb{E}[Y_{t+1}|\mc{F}_t]\le p^{-\lfloor(k-t)/8\rfloor/2}+ (2\sqrt{p})Y_t + 2\sqrt{p}\le 3p^{-\lfloor (k-t)/8\rfloor/2} + (2\sqrt{p})Y_t.
\end{equation}
Indeed, notice that if $X_t<\lfloor (k-t)/8\rfloor$ then we have $X_{t+1}\le X_t+1\le\lfloor(k-t)/8\rfloor$ and the result follows immediately. Else if $X_t>0$ and $X_t\ge\lfloor (k-t)/8\rfloor$ then we have
\[\mb{E}[Y_{t+1}+1|\mc{F}_t]\le\sqrt{p}(Y_t+1)+p\cdot(1/\sqrt{p})(Y_t+1),\]
which also implies \eqref{eq:walk}. Finally if $X_t=0$ and $X_t\ge\lfloor(k-t)/8\rfloor$ then we have
\[\mb{E}[Y_{t+1}+1|\mc{F}_t]\le (1/\sqrt{p}-1)p\le\sqrt{p},\]
which gives \eqref{eq:walk}; thus we have verified \eqref{eq:walk} in all cases. Let $Z_t = p^{(k-t)/8}Y_t$ and note for $t\le k-1$ that
\[\mb{E}[Z_{t+1}|\mc{F}_t]\le p^{(k- t - 1)/8}(3p^{-\lfloor(k-t)/8\rfloor/2} + (2\sqrt{p})Y_t)\le 3 + p^{1/3}Z_{t}\]
if $p\le C^{-1}\le1/2^{24}$. Iterating this bound we have
\begin{align*}
\mb{E}Z_k&\le3+3\cdot p^{1/3}+3\cdot(p^{1/3})^2+\cdots+p^{(k-1)/3}\mb{E}Z_1\\
&\le 4 + p^{(k-1)/3}p^{(k-1)/8}p^{-k/8}\le 5
\end{align*}
as $p\le C^{-1}$ and $k\ge 5$. By Markov's inequality, for $p\le C^{-1}$ we have
\[\mb{P}(X_k\ge t) = \mb{P}(Z_k\ge p^{-t/2}-1)\le 6p^{t/2},\]
as desired.
\end{proof}

\section{A deterministic singular value update formula}\label{sec:update-formula}
The goal of this section is to prove the following relationship between a window of singular values of a matrix $M$ and a similar window of the matrix $M$ with a row/column added, as we discussed at \eqref{eq:lin-alg-prod}. This inequality is central to our analysis of the random walk on the window height, which we defined in Section~\ref{sec:sketch-random-walk}. We note that this section is completely self-contained and concerns only deterministic properties of matrices. 

\begin{prop}\label{prop:walk-row-modified}
Let $n\le m$, let $M$ be an $n\times m$ matrix, and let $M'$ be an $(n+1)\times m$ matrix obtained by adding the row $X$ to $M$. For $1\le k - 1 \le \ell < m$, we have 
\begin{equation}\label{eq:prop:walk-row-modified}-\frac{1}{n}\sum_{i=k}^{\ell+1} \log \s_i(M') \leq   - \frac{1}{n}\sum_{i=k-1}^{\ell} \log \s_i(M) + f(M,X)\end{equation} where 
\[ f(M,X) = \frac{1}{2n}\log\Bigg( \frac{\snorm{X}_2^2 + \sigma_{k-1}(M)^2}{\|P X^\dagger \|_2^2}\Bigg). \]
Here $P$ is the orthogonal projection onto the span of the $m-\ell$ smallest right-singular vectors of $M$.
\end{prop}


We start towards proving Proposition~\ref{prop:walk-row-modified}, by noting the following two basic facts. First the Cauchy interlacing theorem for singular values. 

\begin{fact}\label{fact:cauchy-interlacing}
Let $M$ be an $n\times m$ matrix and let $M'$ be $M$ with a row added. Then
\[ \s_m(M) \le \s_m(M') \le \s_{m-1}(M) \le \s_{m-1}(M') \le \cdots \le \s_1(M) \le \s_1(M'). \]
\end{fact}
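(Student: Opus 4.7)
The plan is to reduce the singular value interlacing to the standard Cauchy/Weyl interlacing for Hermitian matrices under a rank-one positive semidefinite perturbation. Write $M'$ as $M$ with a new row $r^\dagger$ appended, where $r \in \C^m$. Then the Gram matrices satisfy
\[
(M')^\dagger M' \;=\; M^\dagger M + r r^\dagger,
\]
so $(M')^\dagger M'$ is a rank-one PSD perturbation of $M^\dagger M$. Since $\sigma_j(M)^2$ and $\sigma_j(M')^2$ are exactly the (decreasing) eigenvalues of these two $m\times m$ PSD matrices, it suffices to prove that for any $m \times m$ Hermitian $A$ and any $v \in \C^m$, with eigenvalues listed in decreasing order,
\[
\lambda_k(A) \;\le\; \lambda_k(A + v v^\dagger) \;\le\; \lambda_{k-1}(A),
\]
and then take square roots.

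The first inequality is immediate from Courant--Fischer: writing
\[
\lambda_k(B) \;=\; \max_{\dim V = k}\; \min_{\substack{x \in V \\ \|x\|=1}} x^\dagger B x,
\]
and noting that $x^\dagger (A+vv^\dagger) x = x^\dagger A x + |v^\dagger x|^2 \ge x^\dagger A x$ pointwise, any $k$-dimensional subspace attaining the max-min for $A$ also certifies a lower bound for $A+vv^\dagger$.

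For the upper bound, I would use the dual min-max characterization
\[
\lambda_k(B) \;=\; \min_{\dim W = m-k+1}\; \max_{\substack{x \in W \\ \|x\|=1}} x^\dagger B x.
\]
Take any $W$ of dimension $m - (k-1) + 1 = m-k+2$ achieving $\lambda_{k-1}(A)$, and intersect it with the hyperplane $v^\perp$, obtaining a subspace $W'$ of dimension at least $m-k+1$ on which the perturbation $vv^\dagger$ vanishes. Then
\[
\lambda_k(A + vv^\dagger) \;\le\; \max_{\substack{x \in W' \\ \|x\|=1}} x^\dagger (A + vv^\dagger) x \;=\; \max_{\substack{x \in W' \\ \|x\|=1}} x^\dagger A x \;\le\; \lambda_{k-1}(A),
\]
as desired. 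Combining these two inequalities for each $k$ and taking nonnegative square roots (valid since both Gram matrices are PSD) yields the full chain
\[
\sigma_m(M) \le \sigma_m(M') \le \sigma_{m-1}(M) \le \cdots \le \sigma_1(M) \le \sigma_1(M').
\]

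There is no real obstacle here: this is a standard corollary of Weyl's rank-one interlacing. The only tiny care is the indexing convention (singular values of an $n \times m$ matrix versus an $(n+1) \times m$ matrix both being indexed up to $m$, which is consistent because $M^\dagger M$ and $(M')^\dagger M'$ are both $m \times m$).
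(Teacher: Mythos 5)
The paper states this as a Fact without proof, treating it as a standard corollary of Weyl interlacing. Your argument --- passing to the $m\times m$ Gram matrices via $(M')^\dagger M' = M^\dagger M + rr^\dagger$, applying the two dual Courant--Fischer characterizations to the rank-one PSD perturbation, and taking square roots --- is the canonical proof, and the indexing point you flag (both Gram matrices being $m\times m$, so singular values of both $M$ and $M'$ are indexed up to $m$) is exactly the paper's convention as recorded in Fact~\ref{fact:lin-alg}.
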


Next we require the following basic fact from linear algebra. 
\begin{fact}\label{fact:lin-alg}
Let $M$ be an $n\times m$ matrix with $n\le m$. We have $\sigma_i(M) = \sigma_i(M^{\dagger})$ for $i\le n$ and $\sigma_i(M) = 0$ for $n+1\le i\le m$.
\end{fact}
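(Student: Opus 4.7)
The plan is to derive both claims from a single invocation of the singular value decomposition. The first step is to write $M = U\Sigma V^\dagger$, where $U$ is $n\times n$ unitary, $V$ is $m\times m$ unitary, and $\Sigma$ is the $n\times m$ rectangular ``diagonal'' matrix with $\Sigma_{ii}=\sigma_i(M)$ for $1\le i\le n$ and zeros elsewhere. This is the standard existence statement for the SVD of a (possibly non-square) matrix.

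Given this, the second claim is immediate: computing $M^\dagger M = V(\Sigma^\dagger\Sigma)V^\dagger$ realises $M^\dagger M$ as an $m\times m$ Hermitian matrix whose eigenvalues (with multiplicity) are $\sigma_1(M)^2,\ldots,\sigma_n(M)^2$ together with $m-n$ additional zeros. Since the paper's convention $\sigma_1(M)\ge\cdots\ge\sigma_m(M)$ is that these are the ordered square roots of the eigenvalues of $M^\dagger M$, this forces $\sigma_i(M)=0$ for all $n+1\le i\le m$.

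For the first claim, I would simply conjugate-transpose the SVD to obtain $M^\dagger = V\Sigma^\dagger U^\dagger$. The matrix $\Sigma^\dagger$ is $m\times n$ with diagonal entries $\sigma_1(M),\ldots,\sigma_n(M)$, so this expression is itself an SVD of the $m\times n$ matrix $M^\dagger$. Reading off its singular values in the paper's convention (indexed up to $n$, the number of columns of $M^\dagger$) yields $\sigma_i(M^\dagger)=\sigma_i(M)$ for $1\le i\le n$. Equivalently, one can note that $M^\dagger M$ and $MM^\dagger$ share the same nonzero spectrum (with multiplicities), and read off both statements from this.

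There is no genuine obstacle in the argument; the only care needed is bookkeeping the convention that an $n\times m$ matrix with $n<m$ has its singular value list padded out to length $m$ with zeros, whereas its conjugate transpose carries only $n$ singular values under the same convention.
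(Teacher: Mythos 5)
Your argument is correct and complete: the SVD gives both claims directly, and you have correctly handled the dimensional bookkeeping (that the singular-value list of an $n\times m$ matrix has length $m$, padded by zeros beyond the $n$th). The paper itself states this Fact without proof, treating it as standard linear algebra, so there is no proof to compare against; your SVD derivation is exactly the canonical argument one would supply.
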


 Our proof of Proposition~\ref{eq:prop:walk-row-modified} is based on a careful analysis of the following nice relationship between the singular values of $M'$ given the singular vectors of $M$.

\begin{fact}\label{fact:cauchy-interlacing2}
Let $M$ be an $n\times m$ matrix and let $M'$ be an $(n+1)\times m$ matrix obtained by adding the row $X$ to $M$. Let $v_i $ denote the $i$th right singular vector of $M$. Then the roots of the equation 
\begin{equation}\label{eq:walk-key}
\prod_{i=1}^{m}\big(\sigma_i(M)^2-x\big)+\sum_{i=1}^{m}\big|\sang{v_i,X^\dagger}\big|^2 \cdot \prod_{j\neq i}\big(\sigma_i(M)^2-x\big) = 0
\end{equation}
are precisely $\sigma_i(M')^2$ for $i\in[m]$.
\end{fact}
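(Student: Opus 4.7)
\medskip

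The plan is to recognize that the claimed identity is exactly the characteristic polynomial of $(M')^\dagger M'$ written out via the matrix determinant lemma. Concretely, since $M'$ is obtained from $M$ by stacking the row $X$, we have
\[ (M')^\dagger M' = M^\dagger M + X^\dagger X, \]
an $m \times m$ Hermitian matrix, and $X^\dagger X = u u^\dagger$ with $u = X^\dagger \in \mathbb{C}^m$ is a rank-one Hermitian PSD perturbation of $M^\dagger M$. By Fact~\ref{fact:lin-alg} (applied to $M'$, noting $n+1 \le m$ or $n+1 > m$ both work, with the convention that some $\sigma_i(M')=0$ when $n+1 < m$), the squared singular values $\sigma_i(M')^2$ for $i \in [m]$ are precisely the eigenvalues of $(M')^\dagger M'$, i.e.\ the roots (with multiplicity) of $\det((M')^\dagger M' - xI) = 0$.

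The first step is to diagonalize $M^\dagger M$. Since $v_1(M), \ldots, v_m(M)$ form an orthonormal basis of $\mathbb{C}^m$ with $M^\dagger M\, v_i(M) = \sigma_i(M)^2 v_i(M)$, in this basis $M^\dagger M - xI$ is diagonal with entries $\sigma_i(M)^2 - x$. Therefore
\[ \det(M^\dagger M - xI) = \prod_{i=1}^m (\sigma_i(M)^2 - x), \]
and, when $\sigma_i(M)^2 \neq x$ for all $i$, the inverse of $M^\dagger M - xI$ is diagonal in the same basis with entries $(\sigma_i(M)^2 - x)^{-1}$.

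Next I would apply the matrix determinant lemma (i.e., the Sherman--Morrison determinant identity): for an invertible Hermitian matrix $A$ and column vector $u$,
\[ \det(A + u u^\dagger) = \det(A)\,\bigl(1 + u^\dagger A^{-1} u\bigr). \]
Taking $A = M^\dagger M - xI$ (assumed invertible; the general case follows by continuity in $x$) and $u = X^\dagger$, the factor $u^\dagger A^{-1} u$ becomes
\[ \sum_{i=1}^m \frac{|\langle v_i(M), X^\dagger \rangle|^2}{\sigma_i(M)^2 - x}. \]
Multiplying through by $\det(A)$ yields
\[ \det((M')^\dagger M' - xI) = \prod_{i=1}^m (\sigma_i(M)^2 - x) + \sum_{i=1}^m |\langle v_i(M), X^\dagger \rangle|^2 \prod_{j \neq i}(\sigma_j(M)^2 - x), \]
which is exactly \eqref{eq:walk-key}. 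As this is a polynomial identity in $x$, the continuity argument removes the invertibility assumption, and since both sides are polynomials of degree $m$ whose zeros coincide (as eigenvalues of $(M')^\dagger M'$), the roots of the right-hand side are $\{\sigma_i(M')^2 : i \in [m]\}$.

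There is essentially no obstacle here: this is a direct computation, and the only mild care required is the handling of the case $n+1 \le m$, where some $\sigma_i(M')$ vanish. This is built in automatically because we are looking at the full $m \times m$ Gram matrix $(M')^\dagger M'$, whose $m$ eigenvalues (with multiplicity, including zeros) are exactly $\{\sigma_i(M')^2 : i \in [m]\}$ by Fact~\ref{fact:lin-alg}.
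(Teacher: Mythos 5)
Your proof is correct and follows essentially the same route as the paper: both reduce to the characteristic polynomial of $(M')^\dagger M' = M^\dagger M + X^\dagger X$, diagonalize $M^\dagger M$ in the right-singular-vector basis, apply the matrix determinant lemma, and then remove the invertibility restriction by a continuity/polynomial-identity argument. (You also silently corrected a small typo in the displayed equation, writing $\sigma_j$ rather than $\sigma_i$ inside the product over $j\neq i$; the paper's intent is clearly the same.)
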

\begin{proof}
Note that $(M')^{\dagger}M' = M^{\dagger}M + X^{\dagger}X$. Also, we have the rank one orthogonal decomposition
\[M^\dagger M=\sum_{i=1}^{m}\sigma_i(M)^2 v_i(M)v_i(M)^\dagger\]
and hence for all $x\notin\{\sigma_i(M)^2\colon i\in[m]\}$ we have
\[(M^\dagger M-xI_m)^{-1}=\sum_{i=1}^m(\sigma_i(M)^2-x)^{-1}v_i(M)v_i(M)^\dagger.\]
Next, the matrix determinant lemma gives $\det(A+uv^{\dagger})=(1+v^{\dagger}A^{-1}u)\det A$. Therefore by direct computation, for generic $x$ we have
\begin{align*}
\det((M')^{\dagger}M'-xI) &=(1 + X(M^{\dagger}M-xI_m)^{-1}X^\dagger)\det(M^{\dagger}M - xI_m) \\
&= \prod_{i=1}^{m}(\sigma_i^2(M)-x)+\sum_{i=1}^{m}|\sang{v_i,X^\dagger}|^2\prod_{j\neq i}(\sigma_i^2(M)-x).
\end{align*}
The resulting equality is in fact valid for all $x\in\mb{C}$ by the identity theorem. Finally, the roots of $\det((M')^{\dagger}M' - xI) = 0$ are the squares of the singular values of $M'$, so the desired result follows. 
\end{proof}

\vspace{1mm}

We are now in a position to prove Proposition~\ref{prop:walk-row-modified}.

\begin{proof}[Proof of Proposition~\ref{prop:walk-row-modified}]
Note that the statement is vacuous for $\ell\ge n+1$ as the right-hand side is zero; thus it suffices to consider $\ell\le n$. We fix unit right-singular vectors $v_1(M), \ldots, v_m(M)$ such that $\snorm{Mv_i(M)}_2 = \sigma_i(M)$. We may write
\[M=\sum_{i=1}^n\sigma_i(M)u_i(M)^\dagger v_i(M)\]
where $u_i(M)^\dagger$ is the $i$th left-singular vector of $M$.

Via a continuity argument, it suffices to assume that $\sigma_i(M)$ are distinct for $i\le n$ and $\sang{v_i(M),X^\dagger}\neq 0$ for all $i$. In particular, let $M^{\eps} = M + \sum_{i=1}^{n}\eps Z_i u_i(M)^{\dagger}v_i(M)$ where $Z_i$ are uniform in $[0,1]$ and let $X^{\eps} = X + \eps Z'$ where $Z'$ is a standard $m$-dimensional Gaussian. For any sufficiently small fixed $\eps > 0$, with probability $1$, $X^{\eps}$ and $M^{\eps}$ satisfy $\sang{v_i(M),(X^{\eps})^\dagger}\neq 0$, $M^{\eps}$ has the same left- and right-singular vectors as $M$ (up to reordering the ones which were for the same singular value), and the singular values of $M^{\eps}$ are distinct. Taking $\eps\to 0^{+}$ gives the desired result. 

Now define 
\[F(x) = 1+\sum_{i=1}^{m}\frac{|\sang{v_i(M),X^\dagger}|^2}{\sigma_i(M)^2-x}.\]
The roots of $F(x)$ are $\sigma_i(M')^2$ for $i\le\min(n+1,m)$ by Fact~\ref{fact:cauchy-interlacing2} and since the $\sigma_i(M)$ are distinct. $F(x)$ is increasing in $(\sigma_i(M)^2,\sigma_{i-1}(M)^2)$ (where we write $\sigma_{0}(M) = +\infty$) for $1\le i\le n$. Finally, for $1\le i\le n$ we have
\[\lim_{x\to \sigma_i(M)^{+}}F(x) = -\infty \text{ and }\lim_{x\to \sigma_i(M)^{-}}F(x) = +\infty.\]

Let $\eta = \sum_{i=\ell + 1}^{m}|\sang{v_i(M),X^\dagger}|^2$ and note that $\eta=\snorm{PX^\dagger}_2^2$. For $x\in(\sigma_{\ell+1}(M)^2,\sigma_{k-1}(M)^2)$, we have 
\begin{align*}
F(x) &\le 1+\frac{\sum_{i=1}^{k-1}|\sang{v_i(M),X^\dagger}|^2}{\sigma_{k-1}(M)^2-x}+\sum_{i=k}^m\frac{|\sang{v_i(M),X^\dagger}|^2}{\sigma_i(M)^2-x}\\
&\le 1+\frac{\sum_{i=1}^{k-1}|\sang{v_i(M),X^\dagger}|^2}{\sigma_{k-1}(M)^2-x}+\sum_{i=k}^{\ell}\frac{|\sang{v_i(M),X^\dagger}|^2}{\sigma_i(M)^2-x} + \sum_{i=\ell + 1}^{m}\frac{|\sang{v_i(M),X^\dagger}|^2}{-x}\\
& = 1+\frac{\sum_{i=1}^{k-1}|\sang{v_i(M),X^\dagger}|^2}{\sigma_{k-1}(M)^2-x}+\sum_{i=k}^\ell\frac{|\sang{v_i(M),X^\dagger}|^2}{\sigma_i^2(M)-x}+\frac{\eta}{-x} =: G(x). 
\end{align*}
Via direct inspection $G(x)$ has $\ell - k + 3$ zeros $\tau_0>\tau_1>\cdots>\tau_{\ell-k+2}$. Using $F(x)\le G(x)$ we have that $0\le \tau_i\le \sigma_{i+k-1}(M')$ for $1\le i\le \ell - k + 2$ and $\tau_0>\sigma_{k-1}(M)$. By Vieta's formula we have
\[\prod_{i=0}^{\ell-k+2}\tau_i =\eta\prod_{i=k-1}^\ell\sigma_i(M)^2.\] Finally, note that $G$ is increasing and starts near $-\infty$ for $x$ close to $\sigma_{k-1}^2$ from above and
\[G(\sigma_{k-1}(M)^2+\snorm{X}_2^2)\ge 1-\frac{1}{\snorm{X}_2^2}\bigg(\sum_{i=1}^{k-1}|\sang{v_i(M),X^\dagger}|^2+\sum_{i=k}^\ell|\sang{v_i(M),X^\dagger}|^2+\eta\bigg)=0.\]
Thus the additional root satisfies the bound $\tau_0\le\sigma_{k-1}^2+\snorm{X}_2^2$. Finally, we deduce
\[\prod_{i=k}^{\ell+1}\sigma_i(M')^{2}\ge\prod_{i=1}^{\ell-k+2}\tau_i =\eta\tau_0\prod_{i=k-1}^\ell\sigma_i^2\ge\eta(\snorm{X}_2^2+\sigma_{k-1}^2)^{-1}\prod_{i=k-1}^\ell\sigma_i^2.\]
Taking logarithms completes the proof. 
\end{proof}

\vspace{1em}

We remark that the auxiliary function $G(x)$, defined in the proof, has a natural ``physical'' interpretation. It corresponds to the situation when all of the $L^2$-mass of $X^{\dagger}$ that is on 
\[\Span\{ v_{k-1}(M),\ldots,v_{m}(M) \} \] is concentrated on the single direction $v_{k-1}(M)$; and all the $L^2$-mass of $X^{\dagger}$ on 
\[ \Span\{ v_{\ell+1}(M),\ldots,v_m(M)\}\] is concentrated at $0$. The key observation in the above is that as we deform the equation $F(x) = 0 $ into $G(x) = 0 $, our roots of interest move monotonically downward.

A physical picture, which may be useful for some readers, is to imagine repulsive charges placed at the values $\sigma_i(M)$ with charge $|\sang{v_i(M),X^{\dagger}}|^2$. We can now interpret the roots of $F(x)$ as the places on the real line where these charges precisely cancel out. Thus, in this interpretation, when we deform $F(x)$ into $G(x)$ we push and merge all the charges $\geq \sigma_{k-1}(M)$ into a single charge at $\sigma_{k-1}(M)$ and, similarly push and merge all of the charges below $\sigma_{\ell+1}(M)$ into a single charge at $0$. Thus, we intuitively see that these locations of perfect cancellation (i.e. the roots we are interested in) move monotonically downward.

\section{Convergence of the singular value measures}\label{sec:singular-convergence}
In this section we carry out the (standard) steps mentioned at \eqref{eq:def-nu} and \eqref{eq:sketch-trace-moments} in the sketch and prove the following lemma on the convergence of the singular value measures of $B_m - zI_m$ and $B_n-zI_n$. We prove this by a completely standard application of the method of moments and so we will allow outselves to be somewhat light on on details in this section.

\begin{lemma}\label{lem:converge-singular}
Let $z\in \mb{C}$ and $\eps>0$ be small but fixed. Let $B_m$ and $B$ be as in Section~\ref{sec:setup}. Let $\nu_{z,n}', \nu_{z,n}$ be the empirical singular value measures of $B_m-zI$ and $B-zI$ respectively. There exist deterministic measures $\nu_{|z|,\eps}'$ and $\nu_{|z|}$ such that $\nu_{z,n}'\rightsquigarrow \nu_{|z|,\eps}'$ and $\nu_{z,n}\rightsquigarrow \nu_{|z|}$. 
\end{lemma}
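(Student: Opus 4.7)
The plan is to apply the classical method of moments to the Hermitian matrices $N_{z,n} := (B-zI)(B-zI)^{\dagger}$ and $N'_{z,n} := (B_m-zI)(B_m-zI)^{\dagger}$. Since the eigenvalues of $N_{z,n}$ are precisely the squared singular values of $B-zI$ (and similarly for $N'_{z,n}$), weak convergence of $\nu_{z,n}$ and $\nu'_{z,n}$ is equivalent to weak convergence of the empirical spectral measures of $N_{z,n}$ and $N'_{z,n}$. The proof thus reduces to three standard pieces: (i) compute the limit $c_k = \lim_{n\to\infty} \tfrac{1}{n}\mb{E}\tr(N_{z,n}^k)$ (and similarly $c_k'(\eps)$) for each fixed integer $k\ge 0$; (ii) prove a variance bound $\Var(\tfrac{1}{n}\tr(N_{z,n}^k)) = o_{k,d,z}(1)$; (iii) verify a Carleman-type growth bound on $(c_k)_{k\ge 0}$ so that the sequence determines a unique probability measure and moment convergence upgrades to weak convergence.

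For step~(i), I expand $\tr(N_{z,n}^k)$ as a sum over closed alternating walks of length $2k$ on $[n]$ and decompose each factor as $B_{ij} - z\,\delta_{ij}$. By Lemma~\ref{lem:TV-estimate}, at a cost of $n^{-1/4}$ in total variation one may replace $B$ by a fully iid $\on{Ber}(d/n)$ matrix $A$ (whose singular-value statistics are permutation invariant), so that the contribution of each walk becomes a product of factorial moments of $\on{Ber}(d/n)$ variables times appropriate powers of $z$ and $\bar z$. A standard walk-counting argument then shows that only walks of tree-like type in which every $B$-edge is used in $z$/$\bar z$-paired fashion survive in the limit, producing a deterministic limit depending only on $d$, $k$, and $|z|^2$; in particular, rotational invariance of the limit in $z$ is forced because every surviving walk uses the same number of $z$ and $\bar z$ factors. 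Step~(ii) is a standard Doob-martingale argument in the filtration that reveals the rows of $A$ one at a time: exchanging a single row perturbs $N_{z,n}$ by a matrix of rank $\le 2$ and Frobenius norm $O(\sqrt{d\log n})$ with high probability, so by Hoffman--Wielandt and the bound $\|B\|_{\mr{op}} = O_d(1)$ w.h.p.\ the moment $\tfrac{1}{n}\tr(N_{z,n}^k)$ shifts by at most $O_{k,d,z}(\sqrt{\log n}/n)$ per row-swap, and Azuma's inequality yields the variance bound. Step~(iii) follows from a straightforward combinatorial upper bound $c_k \le C(d,|z|)^{2k}$ coming from the same walk expansion, which easily verifies Carleman's condition and together with the tail bound $\|B-zI\|_{\mr{op}} \le C_d + |z|$ w.h.p.\ gives tightness.

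The main subtlety, and hence the main obstacle, lies in handling $\nu'_{z,n}$ for $B_m$, which is not itself an iid matrix: $B_m = B[S_m,S_m]$ with $S_m = T_1 \cup (T_2\setminus H)$, and $H$ is the set of vertices in $T_2$ with the largest value $\on{val}(\cdot)$, so $B_m$ comes with an implicit degree-conditioning. However, by Fact~\ref{fact:deg-seq}, conditional on its degree sequence $B_{n-\ell}$ is uniform and $H$ is a function of that sequence; and Lemma~\ref{lem:degree-convergence} shows the empirical joint degree distribution of $B_m$ converges w.h.p.\ to the explicit deterministic law $\rho_{\cdot,\cdot}(\Delta)$, which pins down the asymptotic ``type'' of vertices in $B_m$. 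The walk-counting step~(i) then goes through conditional on this limiting degree profile, now with edge-weights reflecting the Poissonian degree conditioning, and produces an $\eps$-dependent limit $c_k'(d,|z|,\eps)$; rotational invariance and the concentration/tightness arguments of~(ii) and~(iii) carry over unchanged, completing the proof of the lemma.
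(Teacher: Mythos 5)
Your overall strategy — method of moments applied to $(B-zI)^\dagger(B-zI)$ and $(B_m-zI)^\dagger(B_m-zI)$, walk-counting to identify tree-like dominant terms, rotational invariance from the alternating $-z,-\ol z$ decorations on self-loops, and Carleman's condition — is exactly what the paper does (Lemmas~\ref{lem:moment-convergence},~\ref{lem:singular-val-upper}, and the proof of Lemma~\ref{lem:converge-singular}). However, there are two concrete technical flaws in the write-up as given.

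First, your claim that $\snorm{B}_{\mr{op}}=O_d(1)$ (and $\snorm{B-zI}_{\mr{op}}\le C_d+|z|$) with high probability is false. For $p=d/n$ with $d$ fixed, the maximum in/out-degree of $B$ is $\Theta(\log n/\log\log n)$ whp, so $\snorm{B}_{\mr{op}}\ge\sqrt{\text{max degree}}\to\infty$. The paper handles this via Lemma~\ref{lem:schur-bound} (Schur's bound) and the explicit degree-tail estimate in Lemma~\ref{lem:singular-val-upper}, giving an $O(\log n)$-type operator-norm bound and a moment growth rate $\sum_i\sigma_i(B-zI)^k\le n(O(dk)/\log(k+1)+4|z|)^k$ — which is exactly what is needed for Carleman's condition since $k\le\log\log n$. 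Your argument can likely be patched by replacing the incorrect operator-norm bound with this one (the conclusion for each fixed $k$ still holds since $(\log n)^{O(k)}/\sqrt n\to0$), but the per-step Lipschitz constants in your Azuma step then need a truncation/high-probability-event argument rather than being deterministic bounded differences.

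Second, the assertion that the concentration argument of step~(ii) ``carries over unchanged'' to $B_m$ glosses over a real obstruction. Your Doob martingale reveals rows of an iid matrix one at a time, which works because rows are independent. But $B_m=B[S_m,S_m]$ is obtained through a degree-conditioning ($H$ is chosen by $\on{val}$), so revealing rows of $B_m$ is not a martingale filtration for the quantity in question: once you condition on the degree sequence, the rows are coupled. The paper instead switches to the configuration-model representation (Fact~\ref{fact:deg-seq} and Lemma~\ref{lem:config}): conditional on the degree sequence, $B_m$ is a uniformly random bipartite graph, realized via a uniformly random permutation of stubs, and concentration then comes from Lemma~\ref{lem:injection-concentration} (a transposition changes at most $n^{o(1)}$ walks when degrees are $\le\log n$). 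You do correctly invoke Fact~\ref{fact:deg-seq} and Lemma~\ref{lem:degree-convergence} to identify the limiting degree profile, but the concentration mechanism for $B_m$ genuinely must change from your row-martingale to the permutation-concentration machinery, and this should be stated rather than waved through.
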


We prove this by proving the following result on the trace moments of our random matrices. For this we introduce some notation. For a matrix $M$, define 
\[ M^{(1)}=M  \qquad \text{ and } \qquad M^{(-1)}=M^{\dagger}.\]
We also let $B_m$ and $B = B_n$ be as in Section~\ref{sec:setup} throughout.

\begin{lemma}\label{lem:moment-convergence}
Let $d>0$, $\eps >0$ be fixed and for $r\le \log \log n$, let $\vec{s} = (s_1,\ldots,s_{r})\in \{-1,1\}^{r}$. Then there exists a constant $M(\vec{s},\eps)$ such that 
\[
\mb{P}\bigg(\bigg|\frac{1}{m}\on{Tr}\prod_{i=1}^r B_m^{(s_i)}-M(\vec{s},\eps)\bigg|\ge n^{-1/2+o(1)}\bigg) \leq n^{-\omega(1)},\]
and a constant $M'(\vec{s} \hspace{0.5mm} )$ such that 
\[ \mb{P}\bigg(\bigg|\frac{1}{n}\on{Tr}\prod_{i=1}^r B^{(s_i)}-M'(\vec{s} \hspace{0.5mm} )\bigg|\ge n^{-1/2+o(1)}\bigg) \leq n^{-\omega(1)}.\]
As a result, we have for any fixed $z \in \C$, 
\[ \bigg|\mb{E}\bigg[\frac{1}{m}\sum_{i=1}^{m}\big(\s_i(B_m - zI)^{2r} -\s_i(B_m - |z|I)^{2r}\big) \bigg]\bigg|\leq n^{-1/2+o(1)},\]
and 
\[ \bigg|\mb{E}\bigg[\frac{1}{n}\sum_{i=1}^{n}\big(\s_i(B - zI)^{2r} -\s_i(B - |z|I)^{2r}\big)\bigg]\bigg|\leq n^{-1/2+o(1)},\]
where the $o(1)$ terms may depend on $z$.
\end{lemma}

\begin{proof}
We prove the claims only for $B_m$; the analogous claim for $B$ is rather simpler noting that $B$ can be coupled with a $\on{Ber}(d/n)$ matrix by changing at most $(\log n)^3$ many entries with probability $1-n^{-\omega(1)}$.

Note that $\on{Tr}\big(\prod_{i=1}^\ell B_m^{(s_i)}\big)$ can be interpreted as the number of walks of length $\ell$ such that the $i$th step is taken on the digraph associated to $B$ if $s_i = 1$ and is taken on the digraph associated to $B^{\dagger}$ if $s_i = -1$. We prove concentration conditional on the degree sequence of $B_m$ in the configuration model. To show the different possible outcomes of the degree sequence have close means, a standard expectation computation in the configuration model and the control from Lemma~\ref{lem:degree-convergence} suffices.

To prove concentration in the configuration model, the pairing used to define the associated random graph can be viewed as a uniformly random permutation between the left and right stubs. Furthermore, changing two pairings in this random permutation creates or destroys at most $(\log n)^{\log\log n} = n^{o(1)}$ walks counted by the moment under the assumption that degree sequence of the digraph associated to $B_m$ has maximum in- and out-degree bounded by $\log n$ (which we may assume, occurring with probability $1-n^{-\omega(1)}$). The desired concentration then follows from Lemma~\ref{lem:injection-concentration}.

For the second part of the lemma for $B_m$, note that 
\[\frac{1}{m}\sum_{i=1}^{m}\s_i(B_m - zI)^{2r} = \frac{1}{m} \tr(((B_m-zI)^\dagger(B_m-zI))^{r}).\]
One may view this as walks of length $2r$ such that the odd steps correspond to $B_m$ with potential self-loops of weight $-z$ and the even steps correspond to backwards transversal on $B_m$ with potential self-loops of weight $-\ol{z}$. We break up the contributions to the expectation of $\frac{1}{m} \tr(((B_m-zI)^\dagger(B_m-zI))^{r})$ based on the precise graph-theoretic structure (within $B_m$), decorating special self-loops with either $-z$ or $-\ol{z}$. The underlying directed graph is connected.

In order to contribute to the leading term of the expectation, this digraph must have $v$ vertices with $v-1$ total directed edges (not counting special self-loops) and thus is a tree (when viewed as an undirected graph). Additionally, all self-loops must be decorated by $-z$ or $-\ol{z}$. The walk corresponds to a back-and-forth transversal of this tree which goes through each directed edge at least once in each direction, along with transversal of the decorated edges, and it returns to its original point.

Fix such a transversal and analyze the vertex $w$ furthest from the root. We see that the walk traverses an edge to reach $w$, then self-loops through $w$ an even number of times (which contributes $|z|^{2k}$ since the weights must alternate between $-z,-\ol{z}$), and then transverses back along this edge. Factoring out this contribution and proceeding downward inductively, one can observe that all dominant terms have expectations which are a function of $|z|$ (not simply $z$).
\end{proof}

\vspace{1mm}

Next we require the following bound on the operator norm of a matrix in terms of the $\ell_1$ norms of its rows and columns, due to Schur \cite{Sch11}. Indeed, recall that 
for a matrix $M$ the $1\to 1$ norm of $M$, denoted $\snorm{M}_{1\to 1}^{1/2}$, is the maximum $\ell_1$ norm of a column. 
\begin{lemma}\label{lem:schur-bound}
For any matrix $M$,  
\[\snorm{M}_{\mr{op}}\le \snorm{M}_{1\to 1}^{1/2}\snorm{M^{\dagger}}_{1\to 1}^{1/2}.\]
\end{lemma}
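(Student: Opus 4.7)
The plan is to give a direct Cauchy--Schwarz proof, which is standard for Schur's test. First I would recall that $\|M\|_{1\to 1}=\max_j\sum_i|M_{ij}|$ is the maximum column $\ell_1$-sum, and $\|M^\dagger\|_{1\to 1}=\max_i\sum_j|M_{ij}|$ is the maximum row $\ell_1$-sum of $M$. Denote these two quantities by $C$ and $R$ respectively. The goal is to show $|\langle y, Mx\rangle|\le (RC)^{1/2}\|x\|_2\|y\|_2$ for arbitrary vectors $x$ and $y$, which will immediately give the desired operator norm bound by taking the supremum.

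To do this, I would write
\[|\langle y,Mx\rangle|\le\sum_{i,j}|M_{ij}|^{1/2}|y_i|\cdot|M_{ij}|^{1/2}|x_j|,\]
and then apply Cauchy--Schwarz to the pair of families $(|M_{ij}|^{1/2}|y_i|)_{i,j}$ and $(|M_{ij}|^{1/2}|x_j|)_{i,j}$. This yields
\[|\langle y,Mx\rangle|\le\Bigl(\sum_i|y_i|^2\sum_j|M_{ij}|\Bigr)^{1/2}\Bigl(\sum_j|x_j|^2\sum_i|M_{ij}|\Bigr)^{1/2}.\]
The inner row-sum in the first factor is bounded by $R$, and the inner column-sum in the second factor is bounded by $C$, so the right-hand side is at most $R^{1/2}C^{1/2}\|x\|_2\|y\|_2$, completing the proof.

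There is no real obstacle here; the only thing to be slightly careful about is matching the notation $\|\cdot\|_{1\to 1}$ to max column sum (which is the standard convention, since the $\ell_1\to\ell_1$ operator norm of a matrix equals its maximum column $\ell_1$-sum), and then verifying that $\|M^\dagger\|_{1\to 1}$ correspondingly equals the maximum row $\ell_1$-sum of $M$. Alternatively, the same bound can be obtained by Riesz--Thorin interpolation at $\theta=1/2$ between the $\ell_1\to\ell_1$ and $\ell_\infty\to\ell_\infty$ operator norms of $M$, but the Cauchy--Schwarz argument is shorter and entirely elementary.
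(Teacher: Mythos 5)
Your proof is correct. The paper does not actually supply a proof of this lemma — it simply cites Schur — so there is nothing to compare against; your Cauchy--Schwarz argument is the standard, elementary proof of Schur's test and is entirely sound.
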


We will also require a variant of Weyl's inequality which follows immediately from the Courant–Fischer theorem applied to singular values. 
\begin{fact}\label{fact:inequality}
Fix matrices $A,B\in \mb{C}^{n\times n}$. We have for all $1\le i\le n$ that 
\[|\sigma_i(A) - \sigma_i(B)|\le \snorm{A-B}_{\mr{op}}.\]
\end{fact}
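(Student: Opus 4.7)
The plan is to deduce this from the Courant--Fischer min-max characterization of singular values, which states that for any $n \times n$ matrix $M$,
\[
\sigma_i(M) \;=\; \max_{\substack{V \subseteq \mathbb{C}^n \\ \dim V = i}} \; \min_{\substack{v \in V \\ \|v\|_2 = 1}} \|Mv\|_2.
\]
This is the standard formulation referenced in the statement of the fact, and it is the natural input for proving perturbation bounds of Weyl type.

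First I would observe the pointwise operator norm bound: for every unit vector $v$, the reverse triangle inequality gives
\[
\bigl|\,\|Av\|_2 - \|Bv\|_2\,\bigr| \;\le\; \|(A-B)v\|_2 \;\le\; \|A-B\|_{\mathrm{op}}.
\]
Now fix any subspace $V \subseteq \mathbb{C}^n$ of dimension $i$. Taking the minimum over unit vectors $v \in V$ and using the above pointwise bound yields
\[
\min_{\substack{v \in V \\ \|v\|_2 = 1}} \|Av\|_2 \;\le\; \min_{\substack{v \in V \\ \|v\|_2 = 1}} \|Bv\|_2 \;+\; \|A-B\|_{\mathrm{op}}.
\]
Taking the maximum over all $i$-dimensional subspaces $V$ and applying Courant--Fischer on both sides gives $\sigma_i(A) \le \sigma_i(B) + \|A-B\|_{\mathrm{op}}$.

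Finally, swapping the roles of $A$ and $B$ (using that $\|A-B\|_{\mathrm{op}} = \|B-A\|_{\mathrm{op}}$) gives the reverse inequality $\sigma_i(B) \le \sigma_i(A) + \|A-B\|_{\mathrm{op}}$, and combining the two yields the claimed bound $|\sigma_i(A) - \sigma_i(B)| \le \|A-B\|_{\mathrm{op}}$. There is no real obstacle here beyond having the min-max characterization in hand; the only subtlety worth flagging is that the argument is written for square matrices, but an identical proof (with the min-max formula extended to rectangular matrices, or equivalently by zero-padding to make both matrices square) would work in the rectangular setting as well, should it be needed elsewhere.
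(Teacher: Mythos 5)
Your proof is correct and is exactly the route the paper takes: the paper states the fact follows immediately from the Courant--Fischer (min-max) characterization of singular values, which is precisely the argument you have written out via the reverse triangle inequality and symmetry in $A$ and $B$.
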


The last ingredient we need in the proof of Lemma~\ref{lem:converge-singular} is control on the growth rate of the moments of the singular values. This will allow us to apply Carleman's condition to conclude that there exists a probability distribution with a given sequence of moments. 
\begin{lemma}\label{lem:singular-val-upper}
Let $d>1$, $\eps >0$ be fixed and let $k \leq \log \log n$. With probability $1-n^{-\omega(1)}$ we have
\[\sum_{i=1}^{m}\sigma_i(B_m-zI)^k\le \sum_{i=1}^{n}\sigma_i(B-zI)^{k}\le n\bigg(\frac{Cdk}{\log(k+1)} + 4|z|\bigg)^{k},\]
where $C>0$ is an absolute constant. 
\end{lemma}
\begin{proof}
By Fact~\ref{fact:cauchy-interlacing} and Fact~\ref{fact:inequality} we have that $\sigma_i(B_m-zI)\le \sigma_i(B-zI)\le \sigma_i(B) + |z|$. Thus it suffices to consider $B$ and consider the case when $z = 0$. Furthermore one can couple $A$ and $B$ so that $A$ and $B$ differ in at most $(\log n)^{3}$ entries with probability $1-n^{-\omega(1)}$; recall that $A$ is a matrix where the entries are independent $\on{Ber}(d/n)$. 

Note that 
\[\mb{P}(\on{Ber}(d/n)\ge t)\le \binom{n}{t}(d/n)^{t}\le \bigg(\frac{ed}{t}\bigg)^{t}.\]
Therefore applying Chernoff's inequality to the rows and columns of $A$ separately implies that there are at most $(Cd/t)^tn$ vertices with degree larger than $t$ for $t\in [1, \sqrt{\log n}]$ and the maximum degree is at most $\log n$ with probability $1-n^{-\omega(1)}$. Therefore, using Lemma~\ref{lem:schur-bound} we have
\[\sum_{i=1}^{n}\sigma_i(A)^{k}\le n +  \int_{x\ge 1}kx^{k-1}\big(x/Cd\big)^{-x/C}~dx + n(\log n)^{k}\cdot \exp\hspace{-0.5mm}\big(\hspace{-0.5mm}-\hspace{-0.5mm}\Omega\big(\sqrt{\log n}\big) \big),\] which is
\[ \leq   n\bigg(\frac{Cdk}{\log(k+1)}\bigg)^{k} , \]
for some $C>0$. As $A$ and $B$ differ in at most $(\log n)^{3}$ entries with appropriate probability, an analogous bound holds for $B$. 
\end{proof}

\vspace{1mm}

We now turn to prove the main result of this section. 

\begin{proof}[Proof of Lemma~\ref{lem:converge-singular}]
We will restrict attention to convergence of the singular values of $B_m-zI_m$; the proof is identical for $B_n - zI_n$. Note that 
\[\sum_{i=1}^m\sigma_i(B_m-zI_m)^{2k} = \on{Tr}\big(\big((B_m-zI_m)^\dagger(B_m-zI_m)\big)^{k}\big).\]

We use the second part of Lemma~\ref{lem:moment-convergence} to replace $z$ with $|z|$ and then use expansion and the first part to deduce that there are constants $C(|z|,k)$ such that with probability $1-n^{-\omega(1)}$ we have 
\[\bigg|\sum_{i=1}^m\sigma_i(B_m-zI_m)^{2k} - mC(|z|,k)\bigg|\lesssim_{K,|z|} n^{-1/3}\]
for all $k\le \log\log n$. Furthermore by Lemma~\ref{lem:singular-val-upper} we have $C(|z|,k)\le\big(\frac{O(dk)}{\log(k+1)}+4|z|\big)^{2k}$. 

Let $\tilde{\nu}_{z,m}$ denote the uniform measure on the set $\bigcup_{1\le i\le m}\{\sigma_i(B_m-zI_m),-\sigma_i(B_m-zI_m)\}$. It suffices to prove convergence in distribution of $\tilde{\nu}_{z,m}$ to a measure $\tilde{\nu}_{|z|,\eps}$ by the continuous mapping theorem as $|\cdot|$ is continuous everywhere.

By a standard argument (see e.g.~\cite[Section~3.3.5,~p.~140]{Dur19}), it suffices to prove that there exists a unique distribution with zero odd moments and even moments $C(|z|,k)$. Note that 
$\sum_{k\ge 1}C(|z|,k)^{-1/(2k)}\gtrsim_{d,|z|} \sum_{k\ge 1}(\log k)/k = \infty$ and therefore Carleman's condition applies (see e.g.~\cite[Theorem~3.3.25,~Remark]{Dur19}). Also, evidently the distribution one converges to depends only on $|z|$.
\end{proof}

\section{Initializing the process }\label{sec:non-atomic-singular}
In this section we prove that we have control on the \emph{first} step in our random process for almost all $z \in \C$.  This amounts to showing that the singular value measure $B_m - zI_{m}$ does not have an atom at zero for almost all $z \in \C$. As we described in Section~\ref{sec:sketch-initializing}, the method we apply here is quite simple and completely different from the rest of the paper. Interestingly, it uses very little about our random matrix apart from the existence of the limiting singular value measures (Lemma~\ref{lem:converge-singular}) and the fact that $\|B_m\|_{\on{HS}}$ is controlled. Throughout this section we let $B_m$ be as we defined in Section~\ref{sec:setup} and 
define $U'_n$ be the logarithmic potential of $B_m$. We also recall the definition of the initial window in our process from \eqref{eq:window-def}
\[ W_{\g,\delta, m}(z) := - \frac{1}{n}\sum_{j= \g m}^{(\g+\delta)n} \log\big( \s_{m-j}(B_m-zI_m) \big),\]
where we have abused notation slightly and parametrized the window based on $\g,\delta,m$.

\begin{lemma}\label{lem:initial-step}
For all $d, \eps >0$ and $\g>0$, there exists $\delta : \C \rightarrow (0,1)$ such that 
\[ \lim_{m\rightarrow \infty } \mb{P}\big( W_{\g,\delta(z), m}(z) \leq \eps  \big) = 1,  \]
for all $z \in \C$ outside a set of measure $0$.
\end{lemma}
We remark that while we allow ourselves to exclude an arbitrary set of measure zero in the above lemma, we believe that the only value of $z$ that we are required to exclude is $z = 0$. 

We now turn to prove Lemma~\ref{lem:initial-step}. By Lemma~\ref{lem:converge-singular} we know that the limiting distribution of the singular values $\{\sigma_{j}(B_m-zI_m)\}$ exists and is $\nu'_z$. Thus we define the set of points in $\C$ for which $\nu'_z = \nu'_{d,z}$ is atomic at $0$
\begin{equation}\label{eq:def-Sd} S_{d} = \big\{ z \in \C : \nu'_z(\{0\}) > 0 \big\}. \end{equation}
The key step in proving Lemma~\ref{lem:initial-step} is to show that $S_d$ has measure $0$. For this, we first need the following simple deterministic result on the log potential. 

\begin{lemma}\label{lem:upperbounds-on-U(z)} Let $\rho>0$. Then (deterministically) we have
\begin{equation}\label{eq:lem-upperbounds-on-U(Z)} \frac{1}{2\pi}\int_0^{2\pi} U_n'(\rho e^{i\theta}) \, d\theta \leq \log \rho.\end{equation}
\end{lemma}
\begin{proof} We use the expression (from the definition of the log potential)
 \[ U'_n(z)  = -\frac{1}{m}\sum_{i=1}^m \log |z-\lambda_i| \]
 and thus our integral at \eqref{eq:lem-upperbounds-on-U(Z)} becomes 
 \[ \frac{1}{m}\sum_{i=1}^m \frac{1}{2\pi }\int_{0}^{2\pi} \log|\rho e^{i\theta}-\lambda_i|\, d\theta  = \frac{1}{m}\int_0^{\rho} \big| \{ i  : |\lambda_i| < s \big\}\big| s^{-1}\, ds \leq \log \rho, \]
 where the equality holds by Jensen's formula from complex analysis (or direct integration). 
\end{proof}

We now use that our matrices have controlled Hilbert-Schmidt norm. In particular, 
\[ \|B_m-zI_m\|_{\on{HS}}^2\leq C(d+|z|^2)m = O_{z,|d|}(m),\] for some $C>0$ with probability $1-o(1)$ as $m\rightarrow \infty$, by standard concentration estimates. Thus we define $\cB$ to be the event $\|B_m\|_{HS}^2\leq Cdm/2$, which then implies $\|B_m-zI_m\|_{\on{HS}}^2\leq C(d+|z|^2)m$.

\begin{lemma}\label{lem:log-am-gm} Let $z \in \C$. On the event $\cB$ we have 
\[ \frac{1}{m} \sum_{\sigma_i \geq 1} \log \sigma_i =  O_{|z|,d}(1),\]
where $\sigma_i = \sigma_{i}(B_m-zI_m)$. Thus $-U(z) = O_{|z|,d}(1)$ on $\cB$.
\end{lemma}
\begin{proof}
We first note that we always have (not just on $\cB$), 
\[ -U_m(z) \leq \frac{1}{2m}\sum_{\sigma_i \geq 1} \log \sigma_i^2 \leq \frac{1}{2}\log\bigg( \frac{1}{m}\sum_{i=1}^m \sigma_i^2\bigg) \leq \frac{1}{2}\log\big( m^{-1}\|B_m-zI_m\|_{\on{HS}}^2 \big). \]
We then finish by using the definition of $\cB$.
\end{proof}

\vspace{1em}

We now show that $S_d$ has measure zero.

\begin{lemma} We have $\mu(S_{d}) = 0 $.
\end{lemma}
\begin{proof} Assume that $\mu(S_{d}) >0$ and so there is a $\g >0$ so that the set 
\[ S_{d,\g} = \big\{ z \in \C : \nu'_z(\{0\}) > \g \big\}, \]
has positive measure. Now choose a radius $\rho>0$ so that 
the measure of 
\[ S_{\g,\rho} = S_{d,\g} \cap \{ z : |z| = \rho \} \]
is at least $\alpha > 0$ relative to the uniform measure on the circle $\{ z : |z| = \rho\}$.

We use this to lower bound the logarithmic potential $U_n'(z)$. Let $h_R(t) = \min\{ -\log t, R \}$ and note that 
\[ U_n'(z)  =  -\int \log t \, d\nu'_{n,z} \geq \int h_R(t)\, d\nu'_{n,z} = \int_0^1 h_R(t)\, d\nu'_{n,z}  + \int_1^{\infty}h_R(t)\, d\nu'_{n,z} . \, \]
Note the first term on the right hand side is $\geq 0$. Now since $h_R(t)$ is bounded on $[0,1]$, 
\[ \int_0^1 h_R(t)\, d\nu'_{n,z} \geq \int_0^1 h_R(t)\, d\nu'_{z} - 1,\]
with probability $1-o(1)$. Also note that on the event $\cB$, by Lemma~\ref{lem:log-am-gm},
\[ \int_1^{\infty}h_R(t)\, d\nu'_{n,z}  = \frac{1}{m} \sum_{\sigma_i(B_m-zI_m) \geq 1} \log \sigma_i(B_m-zI_m) = O_{|z|,d}(1).\]
So, when $z \in S_{d,\g}$, we have 
\[  U_n'(z) \geq \int h_R(t) \, d\nu'_{z} - O_{|z|,d}(1)  \geq R \g - O_{|z|,d}(1) ,\]
with probability $1-o(1)$ as $n\rightarrow \infty$. Here we are using that $\nu_z'(\{0\}) > \g$. Thus, 
\[ \EE\, U_n'(z)\1_{\cB} \geq R\g - O_{|z|,d}(1), \]
for $n$ sufficiently large. Since $\EE\, U_n'(z)\1_{\cB} + O_{|z|,d}(1)$ is point-wise bounded and non-negative we may apply Fatou's lemma to say
\[ \liminf_{n} \int_0^{2\pi} \EE\, U_n'(e^{i\theta})\1_{\cB}  \geq   \int_0^{2\pi} \liminf_{n} \EE\, U_n'(e^{i\theta})\1_{\cB} \geq (R\g)\alpha - O_{|z|,d}(1) . \]
However, here we see that we contradict Lemma~\ref{lem:upperbounds-on-U(z)}. Since for each fixed $n$ we have
\[ \int \EE\, U_n'(e^{i\theta}) \1_{\cB}\,  d\theta = \EE \1_{\cB} \int U_n'(e^{i\theta}) \, d\theta \leq 2\pi \log \rho . \] So if we chose $R \gg (\log \rho)/(\alpha \g)$, we obtain a contradiction. \end{proof}

\vspace{1em}

We now find ourselves in a position to derive Lemma~\ref{lem:initial-step}.

\begin{proof}[Proof of Lemma~\ref{lem:initial-step}] Let $z \not\in S_{d}$. Now let $\eta>0$ be such that $\nu'_z([0,\eta]) < \g$, which exists by continuity of measure. Thus, by Lemma~\ref{lem:converge-singular}, we have that $\nu'_{n,z}([0,\eta]) < \g $ with probability $1-o(1)$ as $n\rightarrow \infty$. This implies that 
\[\sigma_{(1-\g)m}(B_m-zI_m) \geq \eta,\] with probability $1-o(1)$. Thus if we choose $\delta = \eps(\log 1/\eta)^{-1}$, we have that 
\[ W_{\g, \delta, m}(z) = - \frac{1}{n}\sum_{j= \g m}^{(\g+\delta)n} \log\big( \s_{m-j}(B_m-zI_m) \big) \leq \delta \log 1/\eta \leq \eps, \]
with high probability, as desired. 
\end{proof}

\section{The random walk on windows of singular values}\label{sec:crucial}

In this section we put the ingredients of the proof together to prove the following key lemma on the random walk. Once we have proved this, it will be only a small step to the proof of our main theorem, Theorem~\ref{thm:main}. 

Throughout this section, we let $d>1$ be fixed and $B = B_n$ be the random $n\times n$ matrix defined in Section~\ref{sec:setup}. We also assume that $\eps>0$ is chosen to be sufficiently sufficiently small compared to $d>1$.

\begin{lemma}\label{lem:crucial}
Fix $d > 1$ and let $\eps>0$ be sufficiently small. Then, for almost all $z \in \C$, there exists $\delta = \delta(\eps,z) > 0$ such that
\[\lim_{n\to\infty}\mb{P}\bigg(-\frac{1}{n}\sum_{j=0}^{\delta n}\log \sigma_{n-j}(B-zI)> C\eps\bigg) = 0,\]
where $C = C(d,z)>0$ is a constant depending only on $d$ and $z$.
\end{lemma}

In the remainder of this section we prove this lemma. This essentially follows from piecing together the ingredients which have developed in previous sections.

\begin{proof}[Proof of Lemma~\ref{lem:crucial}]
We first appeal to Lemma~\ref{lem:initial-step} to initialize our process. Apply this lemma with $\eps^4/2$ and $\gamma = \eps^{4}$. Thus for $z$ outside a set of measure $0$, we obtain a $\delta = \delta(\eps,z)$ for which 
\[ - \frac{1}{n}\sum_{j= \g m}^{(\g+\delta)n} \log\big( \s_{m-j}(B_m-zI_m) \big) \leq \eps^4/2 \]
with high probability.

\noindent\textbf{Step 1: Definition of the random walk.}

We now define the random walk. We set $X_m = m - \lceil(1-\eps^4)m\rceil\le \eps^4 m$ and therefore
\begin{align*}
\prod_{j = X_m}^{X_m + \delta m}\sigma_{m-j}(B_m-zI_m)\ge \exp(-\eps^4 m).
\end{align*}
Recall the definition of $\eps_r$ from Section~\ref{sec:anticoncentration}. We iteratively define the random variable $X_{t+1}$ when $m\le t\le n-\ell-1$ by:
\begin{itemize}
    \item If $X_t\le \lfloor n - \ell - t\rfloor/16$, $\snorm{B_{t+1}}_{HS}^2\ge 2dn$, or $t\ge n-\ell - (\log n)^{7/4}$ define $X_{t+1} = X_t + 1$. 
    \item Else if $\sigma_{t-X_t/2}(B_t-zI_t)\ge \eps_{X_t}$, 
    define $X_{t+1} = X_t - 1$.
    \item Else if
    \begin{align*}
    \text{either }&\min(\deg_{B_{n-\ell}}^+(v_{t+1},T_1),\deg_{B_{n-\ell}}^-(v_{t+1},T_1))\le \sqrt{\log(1/\eps)}\\
    \text{or }&\max(\deg_{B_{n-\ell}}^+(v_{t+1}),\deg_{B_{n-\ell}}^-(v_{t+1}))\ge (\log(1/\eps))^2,
    \end{align*}
    define $X_{t+1} = X_t + 1$. 
    \item Else if 
    \begin{align*}
    \|P_{X_t,B_{t}^{\dagger} -\ol{z}I_{t}}((B_{t+1}^{\ast}-zI_{t\times (t+1)})e_{t+1})\|_2 &\ge \eps_{X_t},\\
    \|P_{X_t,B_{t+1}^{\ast} - zI_{t\times (t+1)}}((B_{t+1}^{\dagger}-\ol{z}I_{t+1})e_{t+1})\|_2 &\ge \eps_{X_t}
    \end{align*}
    both hold, define $X_{t+1} = X_t - 1$.
    \item Else define $X_{t+1} = X_t + 1$.
\end{itemize}
When $n-\ell\le t\le n-1$ we define $X_{t+1}$ by:
\begin{itemize}
    \item If $X_t > 1$ and $\sigma_{t}(B_t)\ge \eps_{1}$, define $X_{t+1} = X_t -1$.
    \item Else if $X_t = 1$ and $\sigma_{t}(B_t)\ge \eps_{1}$, or $X_t = 0$ and
     \[\|P_{1,B_{t+1}^{\ast} - zI_{t\times (t+1)}}((B_{t+1}^{\dagger}-\ol{z}I_{t+1})e_{t+1})\|_2 \ge \eps_{1},\]
    define $X_{t+1} = 0$.
    \item Else if $X_t\ge 1$, $\sigma_{t}(B_t)\le \eps_{1}$, and 
    \begin{align*}
    \|P_{1,B_{t}^{\dagger} - \ol{z}I_{t}}((B_{t+1}^{\ast}-zI_{t\times (t+1)})e_{t+1})\|_2 &\ge \eps_{1},\\
     \|P_{1,B_{t+1}^{\ast} - zI_{t\times (t+1)}}((B_{t+1}^{\dagger}-\ol{z}I_{t+1})e_{t+1})\|_2 &\ge \eps_{1}
    \end{align*}
    both hold, define $X_{t+1} = X_t - 1$. 
    \item Else define $X_{t+1} = X_t + 1$.
\end{itemize}

\noindent\textbf{Step 2: Reducing to proving that $X_n = 0$ whp.}
Let 
\[\tau_1 = 8(d(\log(2/\eps))^4+|z|^2)\text{ and } \tau_2 = 8n^4(1+|z|^2).\]
We claim that
\begin{align}\label{eq:product-iterate}
\prod_{j = X_{t+1}}^{X_{t+1}+\delta m}\sigma_{t+1-j}(B_{t+1}-zI_{t+1})&\ge \tau_1^{-2}\eps_{X_t}^2\prod_{j = X_{t}}^{X_t+\delta m}\sigma_{t-j}(B_{t}-zI_t)
\end{align}
for $m\le t\le n-\ell - 1$ and
\begin{align}\label{eq:product-iterate-2}
\prod_{j = X_{t+1}}^{X_{t+1}+\delta m}\sigma_{t+1-j}(B_{t+1}-zI_{t+1})&\ge \tau_2^{-2}\eps_1^{2}\prod_{j = X_{t}}^{X_t+\delta m}\sigma_{t-j}(B_{t}-zI_t)
\end{align}
for $n-\ell\le t\le n-1$.

Given this claim, if $X_n=0$ then iterating shows that  
\begin{align*}
\prod_{j = 0}^{\delta m}\sigma_{n-j}(B_{n}-zI_n)&\ge \tau_1^{-2(n - m)}\tau_2^{-2\ell}\prod_{t=m}^{n-\ell}\eps_{X_t}^2 \cdot \eps_1^{2\ell}\cdot\prod_{j = X_{m}}^{X_m+\delta m}\sigma_{m-j}(B_{m}-zI_m)\\
&\ge \exp(-C'' \eps n)\cdot\prod_{j = X_{m}}^{X_m+\delta m}\sigma_{m-j}(B_{m}-zI_m)\ge \exp(-2C'' \eps n)\
\end{align*}
for a constant $C'' = C''(d,z)$. The only nontrivial term to estimate is $\prod_{t=m}^{n-\ell}\eps_{X_t}^2$. The desired estimate follows noting that the definition of the walk enforces $X_t\ge \lfloor n - \ell - t\rfloor/16 - 1$ for $m\le t\le n-\ell - 1$, and using $n-m \le 2\eps^{3}n$. We now show why \eqref{eq:product-iterate} and \eqref{eq:product-iterate-2} hold, which will allow us to focus on proving $X_n=0$ whp in the remainder of the proof.

When $X_{t+1} = X_t + 1$, we have that \eqref{eq:product-iterate} and \eqref{eq:product-iterate-2} hold trivially by Facts~\ref{fact:cauchy-interlacing} and~\ref{fact:lin-alg}. The second item defining $X_t$ in the first epoch and first item defining $X_t$ in the second epoch are similarly handled by Facts~\ref{fact:cauchy-interlacing} and~\ref{fact:lin-alg} and the lower bounds on singular values that we are given in these cases.

We now explain how to deduce the claim if the fourth item of the first epoch is used to define $X_{t+1}$; the remaining deductions (corresponding to the second and third items in the second epoch) are essentially identical and are omitted. Note that the norms of the additional row and column added are bounded by $(2\log(1/\eps)^4 + 2|z|^2)^{1/2}$ since the in- and out-degrees of $v_{t+1}$ are bounded by $(\log(1/\eps))^2$. Furthermore note that 
\[\sigma_{n/2}(B_t-zI_t)^2\le \frac{\snorm{B_t-zI_t}_{\mr{HS}}^2}{n/2}\le \frac{2\snorm{B_t}_{\mr{HS}}^2 + 2|z|^2n}{n/2}\le \tau_1\]
since $X_t + \delta m \le n/3$ almost surely and $\snorm{B_t}_{\mr{HS}}^2\le 2dn$. 

Therefore applying Proposition~\ref{prop:walk-row-modified} we have 
\begin{align*}
\prod_{j = X_t}^{X_t + \delta m}\sigma_{t+1 - j}(B_{t+1}^{\ast}-zI_{t\times (t+1)})&\ge \tau_1^{-1}\eps_{X_t}\prod_{j = X_t}^{X_t + \delta m}\sigma_{t - j}(B_t-zI_t),\\
\prod_{j = X_t+1}^{X_t + 1 + \delta m}\sigma_{t+1 - j}(B_{t+1}-zI_{t+1})&\ge \tau_1^{-1}\eps_{X_t}\prod_{j = X_t}^{X_t + \delta m}\sigma_{t+1 - j}(B_{t+1}^{\ast}-zI_{t\times (t+1)}).
\end{align*}
Note that in the first item we add a column to $B_t$ to get $B_{t+1}^{\ast}$; as we are considering right-singular values one needs Fact~\ref{fact:lin-alg} to relate the left- and right-singular values (and this explains the conjugate transpose and $\ol{z}$ in the first part of the fourth item of the first epoch). Multiplying these inequalities we derive exactly \eqref{eq:product-iterate} in this case.

\noindent\textbf{Step 3: Creating quasirandomness events and verifying they hold whp.} We now define a sequence of quasirandomness events; these will ultimately be required in order to import the results of Section~\ref{sec:anticoncentration}. For $m\le t\le n-\ell-1$, we define 
\[\mc{G}_{t+1} = \{B_t\in \mc{U}((\log n)^{3/2})\} \cap \{B_t^{\dagger}\in \mc{U}((\log n)^{3/2})\} \cap \{B_t\in \mc{D}\}\cap \{B_t^{\dagger}\in \mc{D}\}.\]
We define a vertex $v\in H$ to be \emph{degree-bad} if 
\begin{align*}
\min(\deg_{B_{n-\ell}}^+(v,T_1),\deg_{B_{n-\ell}}^-(v,T_1))&\le \sqrt{\log(1/\eps)}\text{ or }\\
\max(\deg_{B_{n-\ell}}^+(v),\deg_{B_{n-\ell}}^-(v))&\ge(\log(1/\eps))^2.
\end{align*}
Furthermore for $m\le t\le n-\ell-1$, let $\mc{H}_{t+1}$ be the event that $v_{t+1}$ is degree-bad and let
\[\mc{J}_{t+1} = \bigg\{\frac{\sum_{v\in S_{n-\ell}\setminus S_{t}}\mbm{1}[v\text{ is degree-bad}]}{(n-\ell)-t}\ge \eps\bigg\}.\]
For $n-\ell\le t\le n-1$, define 
\[\mc{G}_{t+1} = \{B_t^{\dagger}\in \mc{U}((\log\log n)^2)\} \cap \{B_t^{\dagger}\in \mc{D}\}\]
and for $n-\ell\le t\le n-1$, define
\[\mc{G}_{t+1}' = \{B_{t+1}^{\ast}\in \mc{U}((\log\log n)^2)\} \cap \{B_{t+1}^{\ast}\in \mc{D}\}\cap \{B_{t+1}^{\ast}\in\mc{U}^{\ast}\}.\]

We now stitch together various claims regarding these quasi-randomness events. 
\begin{claim}\label{clm:epoch-1-degree}
For all $t\le n-\ell - (\log n)^{7/4}$, we have 
\[\mb{P}(\mc{J}_{t+1}) \le n^{-\omega(1)}\quad\emph{and}\quad\mb{P}(\mc{H}_{t+1}|\mc{J}_{t+1}\cup \mc{F}_{t+1})\le \eps. \]
\end{claim}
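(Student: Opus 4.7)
The plan is to deduce both bounds from Lemma~\ref{lem:large-degree} together with the fact that, for $m \le j < n-\ell$, the sequence $v_{m+1}, \ldots, v_{n-\ell}$ constitutes a uniformly random ordering of $H$. First I would invoke Lemma~\ref{lem:large-degree} to conclude that, outside an event of probability $n^{-\omega(1)}$, the set $D$ of degree-bad vertices in $T_2$ satisfies $|D| \le \eps^5 n$. Since $H \subseteq T_2$ has size $\lfloor \eps^3 n \rfloor$, this forces the fraction of degree-bad vertices in $H$ to be at most $\eps^2 \ll \eps$. I would also record that whether a given vertex in $T_2$ is degree-bad is $\mc{F}_m$-measurable: the in- and out-degrees into $T_1$ and into $[n-\ell]$ are both encoded in $\mc{F}_m$ by the third bullet of Fact~\ref{fact:sigma-algebra} and the definition of $\mc{F}_m$.

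For the bound $\mb{P}(\mc{J}_{t+1}) \le n^{-\omega(1)}$, I would observe that, conditional on $\mc{F}_m$, the set $S_{n-\ell} \setminus S_t = \{v_{t+1}, \ldots, v_{n-\ell}\}$ is a uniformly random subset of $H$ of size $k := (n-\ell) - t$, so the number of degree-bad vertices it contains is hypergeometric with mean at most $\eps^2 k$. A standard Chernoff bound for large multiplicative deviations of a hypergeometric random variable (equivalently, negative-association applied to the indicator variables) then gives that this count exceeds $\eps k$ with probability at most $\exp(-\Omega(\eps k))$. For $t \le n-\ell - (\log n)^{7/4}$ we have $k \ge (\log n)^{7/4}$, so this is bounded by $\exp(-\Omega(\eps (\log n)^{7/4})) = n^{-\omega(1)}$. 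Combining with the failure probability of Lemma~\ref{lem:large-degree} yields the first part of the claim.

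For the second bound, I interpret $\mb{P}(\mc{H}_{t+1} \mid \mc{J}_{t+1}^c \cap \mc{F}_{t+1}) \le \eps$ as the assertion that the $\mc{F}_{t+1}$-conditional probability of $\mc{H}_{t+1}$ is bounded by $\eps$ on the event $\mc{J}_{t+1}^c$. Given $\mc{F}_{t+1}$, the set $H \setminus S_t$ and the identities of its degree-bad elements are both determined (using that $S_t$ is $\mc{F}_{t+1}$-measurable by Fact~\ref{fact:sigma-algebra} and that $S_{n-\ell} = [n-\ell]$ deterministically by Fact~\ref{fact:end-ind}), so $\mc{J}_{t+1}$ is itself $\mc{F}_{t+1}$-measurable. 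The event $\mc{J}_{t+1}^c$ exactly says that at most an $\eps$-fraction of $H \setminus S_t$ is degree-bad, and since $v_{t+1}$ is uniform on $H \setminus S_t$ given $\mc{F}_{t+1}$ (by the construction of the process, together with the independence of the uniform choice from the degree information), the desired bound is immediate. Neither step poses a serious obstacle: the key point is that Lemma~\ref{lem:large-degree} provides a global bound on the number of degree-bad vertices, which combined with the uniform ordering of $H$ yields the hypergeometric concentration driving both estimates.
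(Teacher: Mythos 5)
Your proposal is correct and follows essentially the same two-step route as the paper: Lemma~\ref{lem:large-degree} bounds the global count of degree-bad vertices in $H$, hypergeometric concentration (Lemma~\ref{lem:chernoff}) handles the first bound over the random suffix of the ordering, and the second bound follows by uniformity of $v_{t+1}$ over $H\setminus S_t$ given $\mc{F}_{t+1}$ on the good event. You have correctly noticed and resolved a sign inconsistency in the paper's usage of $\mc{J}_{t+1}$: the event as defined in Step~3 is the bad event (fraction $\ge\eps$), so the second inequality should indeed read as conditioning on $\mc{J}_{t+1}^c$ together with $\mc{F}_{t+1}$, which is exactly your interpretation and matches how the claim is invoked later (both in Claim~\ref{clm:epoch-1-walk-down} and in the indicator defining $Y_t$, where the intended indicator is on $\mc{J}_j^c$). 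Your added detail on the $\mc{F}_m$-measurability of the degree-bad notion and the $\mc{F}_{t+1}$-measurability of $\mc{J}_{t+1}$ is accurate and a welcome clarification of what the paper leaves implicit.
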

\begin{proof}
By Lemma~\ref{lem:large-degree} at most a $2\eps^2$ fraction of vertices in $H$ are degree-bad. Since we add vertices in a random order back to form $S_j$ for $m\le j\le n-\ell$ the first result follows by Chernoff (see e.g.~Lemma~\ref{lem:chernoff}) for the hypergeometric distribution. The second follows by noting that $\mc{J}_{t+1}$ guarantees that at most an $\eps$ fraction of remaining vertices are degree-bad.
\end{proof}

We next have the following consequence of Lemma~\ref{lem:proj-anti-epoch1}.
\begin{claim}\label{clm:epoch-1-walk-down}
There is $C=C(d)>0$ such that for all $t\le n-\ell - (\log n)^{7/4}$, we have 
\[\mb{P}\big( X_{t+1} \le X_t - 1 \big| \mc{J}_{t+1},\mc{F}_{t+1},\mc{G}_{t+1},X_t> \lfloor n-\ell - t\rfloor/16\big) \ge 1- C(\log(1/\eps))^{-1/4}.\]
\end{claim}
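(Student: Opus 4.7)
My approach is to unfold the five-bullet definition of $X_{t+1}$ in the first epoch and track which bullet fires. Under the conditioning of the claim, the first bullet (which would force $X_{t+1} = X_t + 1$) is avoided outside of an $o(1)$-probability event: the hypotheses $X_t > \lfloor n - \ell - t\rfloor/16$ and $t < n - \ell - (\log n)^{7/4}$ rule out two of its three triggers, while $\snorm{B_{t+1}}_{\mr{HS}}^2 \ge 2dn$ fails with probability $o(1)$ by standard concentration on the entry sum (and this failure is absorbed into the advertised rate). So $X_{t+1} \le X_t - 1$ must be witnessed either via the second bullet (a deterministic, $\mc{F}_{t+1}$-measurable singular-value check on $B_t$) or via the fourth bullet (joint conditions on $v_{t+1}$ and on the new row and column).

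If $\sigma_{t - X_t/2}(B_t - zI_t) \ge \eps_{X_t}$ holds, the second bullet triggers and we are done. Otherwise I would pass to the finer filtration $\mc{F}_{t+1}'$, which reveals the identity of $v_{t+1}$. By Claim~\ref{clm:epoch-1-degree} (applied on the conditioning event $\mc{J}_{t+1}^c$ for which that claim is intended), the degree-bad event $\mc{H}_{t+1}$ has conditional probability at most $\eps$. On $\mc{H}_{t+1}^c$ the hypotheses of Lemma~\ref{lem:proj-anti-epoch1} are all in place: $\mc{G}_{t+1}$ supplies the unique-neighbour expansion and degree regularity of $B_t$ and $B_t^\dagger$; the present case assumption furnishes the singular-value bound $\sigma_{t - X_t/2}(B_t - zI_t) \le \eps_{X_t}$; the inequality $X_t > (\log n)^{7/4}/16 \ge (\log n)^{5/3}$ for $n$ large provides the required lower bound on $r = X_t$; and $\mc{H}_{t+1}^c$ gives the needed $\sqrt{\log(1/\eps)}$ lower bound on the degrees of $v_{t+1}$ into $T_1$. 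Applying both halves of Lemma~\ref{lem:proj-anti-epoch1} in turn (first for the new column, then for the new row after further conditioning on $B_{t+1}^\ast$) shows that each of the two projection inequalities in the fourth bullet fails with conditional probability at most $C(\log(1/\eps))^{-1/4}$.

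A union bound over the degree event $\mc{H}_{t+1}$ and the two projection events yields a total failure probability of at most $\eps + 2C(\log(1/\eps))^{-1/4}$, which is absorbed into $C'(\log(1/\eps))^{-1/4}$ for a suitable $C' = C'(d)$. On the complementary event the fourth bullet triggers and $X_{t+1} = X_t - 1$, giving the claim.

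The main obstacle is careful measurability bookkeeping rather than a new estimate: one must reveal $v_{t+1}$ (i.e., pass to $\mc{F}_{t+1}'$) before invoking the degree bound of Claim~\ref{clm:epoch-1-degree}, and one must reveal the new column $B_{t+1}^\ast$ before applying the second half of Lemma~\ref{lem:proj-anti-epoch1} to the new row. The anticoncentration powering Lemma~\ref{lem:proj-anti-epoch1} ultimately leans on Fact~\ref{fact:row-random}, which guarantees that, conditional on $\mc{F}_{t+1}'$ (and then on $B_{t+1}^\ast$), the fresh entries of the new column and row restricted to $T_1$ are uniform $\{0,1\}$-vectors of prescribed sum; the sum being at least $\sqrt{\log(1/\eps)}$ on $\mc{H}_{t+1}^c$ is exactly what produces the rate $C(\log(1/\eps))^{-1/4}$ via Lemma~\ref{lem:slice-anti}.
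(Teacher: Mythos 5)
Your proposal follows the paper's proof closely: condition away the first bullet, split on whether the second bullet fires, bound the degree-bad event $\mc H_{t+1}$ by $\eps$ via Claim~\ref{clm:epoch-1-degree}, and bound the two projection events by $C(\log(1/\eps))^{-1/4}$ each via Lemma~\ref{lem:proj-anti-epoch1}. This is exactly right at the structural level. However, two of the justifications you give are off, and would need to be corrected in a full write-up.

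First, the Hilbert--Schmidt norm. You claim $\snorm{B_{t+1}}_{\mr{HS}}^2\ge 2dn$ ``fails with probability $o(1)$ by standard concentration on the entry sum.'' But once you condition on $\mc F_{t+1}$, the submatrix $B_t$ is fully determined, so there is no concentration argument available; the bound must be deterministic. And indeed it is: $\mc G_{t+1}$ forces $B_t\in\mc D$, hence the degree sequence is $(d,\eps^{1/2},16)$-regular, which gives $\snorm{B_t}_{\mr{HS}}^2\le (1+\eps^{1/2})dn$; and on $\mc H_{t+1}^c$ the new row and column each contribute at most $(\log(1/\eps))^2$ entries, so $\snorm{B_{t+1}}_{\mr{HS}}^2\le 2dn$ outright. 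This is the route the paper takes, and it is the only one available: there is simply no leftover randomness to concentrate over at this stage.

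Second, the measurability remark at the end is stated backwards: you say ``one must reveal $v_{t+1}$ (i.e., pass to $\mc F_{t+1}'$) before invoking the degree bound of Claim~\ref{clm:epoch-1-degree}.'' The opposite is true. The event $\mc H_{t+1}$ is $\mc F_{t+1}'$-measurable, so once you pass to $\mc F_{t+1}'$ the conditional probability of $\mc H_{t+1}$ is $0$ or $1$ and the claim tells you nothing. The whole point of Claim~\ref{clm:epoch-1-degree} is that it averages over the uniformly random choice of $v_{t+1}$, which is exactly the randomness one has given $\mc F_{t+1}$ but not given $\mc F_{t+1}'$. The correct ordering is: bound $\mb P(\mc H_{t+1}\mid\mc F_{t+1},\ldots)\le\eps$ first; then, on $\mc H_{t+1}^c$, pass to $\mc F_{t+1}'$ and (for the second projection) further condition on $B_{t+1}^\ast$ before invoking Lemma~\ref{lem:proj-anti-epoch1}. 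Your analogous remark about revealing $B_{t+1}^\ast$ before the second application of Lemma~\ref{lem:proj-anti-epoch1} is correct; it is only the $v_{t+1}$ half that is inverted. Neither slip changes the final bound, since the union-bound tally $\eps+2C(\log(1/\eps))^{-1/4}\lesssim_d(\log(1/\eps))^{-1/4}$ is unaffected, but both would need to be fixed for the argument to parse.
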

\begin{proof}
By Claim~\ref{clm:epoch-1-degree}, we have 
\[\mb{P}\big( \mc{H}_{t+1} \big| \mc{J}_{t+1},\mc{F}_{t+1}\big)\le \eps.\]
Therefore it suffices to prove that 
\[\mb{P}(X_{t+1} \le X_t - 1|\mc{H}_{t+1}^c,\mc{F}_{t+1}',\mc{G}_{t+1},X_t> \lfloor n-\ell - t\rfloor/16) \le C(\log(1/\eps))^{-1/4}/2.\]
Note that $\mc{G}_{t+1}$ (and in particular $\mc{D}$) and $\mc{H}_{t+1}^c$ guarantees that $\snorm{B_{t+1}}_{\mr{HS}}^2\le 2dn$. If we have $\sigma_{t-X_t/2}(B_t)\ge \eps_{X_t}$, we instantaneously have $X_{t+1} = X_t -1$ as desired. Otherwise applying Lemma~\ref{lem:proj-anti-epoch1} implies that the fourth item defining $X_{t+1}$ in the first epoch holds with the desired probability bound. Note that $t\le n-\ell-(\log n)^{7/4}$ implies that $X_t\ge (\log n)^{7/4}/32$ due to the definition of the walk and therefore the necessary dimension lower bound to apply Lemma~\ref{lem:proj-anti-epoch1} holds. 
\end{proof}

In an analogous manner we have the following drift statement for the second epoch; the proof is analogous except we invoke Lemma~\ref{lem:proj-anti-epoch2} and keep track of events such as $\mc{U}^\ast$, hence the inclusion of $\mc{G}_{t+1}'$.
\begin{claim}\label{clm:epoch-2-walk-down}
There is $C=C(d)>0$ such that if $X_{n-\ell}\le 4(\log n)^{7/4}$ then for all $n-\ell\le t\le n-1$, we have 
\[\mb{P}\big(\{X_{t+1} \le X_t - 1 + \mbm{1}_{X_t = 0}\}\emph{ or } \mc{G}_{t+1}^{c} \emph{ or } \mc{G}_{t+1}'^{c} ~\big|~ \mc{F}_{t+1}\big) \ge 1- C(\log n)^{-1/4}.\]
\end{claim}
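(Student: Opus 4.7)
The plan is to follow the template of Claim~\ref{clm:epoch-1-walk-down}, substituting Lemma~\ref{lem:proj-anti-epoch2} for Lemma~\ref{lem:proj-anti-epoch1} and the event $\mc{G}_{t+1}'$ for the degree-control events $\mc{H}_{t+1}, \mc{J}_{t+1}$. A notable simplification relative to the first epoch is that the entries of the new row and column are genuinely independent of $\mc{F}_{t+1}$ by Fact~\ref{fact:end-ind}; hence no degree-selection step (the analog of Claim~\ref{clm:epoch-1-degree}) is required, and the anticoncentration comes directly from the independent $\mr{Ber}(\sqrt{\log n}/n)$ coordinates appearing in $B_{t+1}^{\ast}$ and $B_{t+1}^{\dagger}$.

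Concretely, I would condition on $\mc{F}_{t+1}$ and assume $\mc{G}_{t+1}\cap\mc{G}_{t+1}'$ holds (else the event in the claim is automatic) and then split cases according to the four bullets defining the walk in the second epoch. If $X_t\ge 1$ and $\sigma_t(B_t-zI_t)\ge\eps_1$, either the first or second bullet deterministically yields $X_{t+1}=\max(X_t-1,0)$, which already satisfies $X_{t+1}\le X_t-1+\mbm{1}_{X_t=0}$. Otherwise we are in the regime $X_t=0$, or $X_t\ge 1$ with $\sigma_t(B_t-zI_t)<\eps_1$; in both subcases the walk steps down precisely when the relevant one or two projection conditions hold. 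The hypotheses of Lemma~\ref{lem:proj-anti-epoch2} applied at time $t+1$, namely $\sigma_t(B_t-zI_t)\le\eps_1$ together with $B_t^{\dagger}\in\mc{U}((\log\log n)^2)\cap\mc{D}$ and $B_{t+1}^{\ast}\in\mc{U}((\log\log n)^2)\cap\mc{U}^{\ast}\cap\mc{D}$, are secured by $\mc{G}_{t+1}\cap\mc{G}_{t+1}'$. Applying the two bullets of the lemma (the first conditioned on $\mc{F}_{t+1}$, the second additionally on $B_{t+1}^{\ast}$) and union-bounding, each projection lies below $\eps_1$ with probability at most $C(\log n)^{-1/4}$, which gives the claim with a constant $2C$.

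The only obstacle is bookkeeping, namely the index alignment: Lemma~\ref{lem:proj-anti-epoch2} is stated with the transition at time $t$ and hypothesizes a singular-value bound on $B_{t-1}$, whereas the walk transitions $t\to t+1$ and references $B_t$; this is merely a relabeling $t\leftrightarrow t+1$, and the conditioning matches $\mc{F}_{t+1}$ (and then $B_{t+1}^{\ast}$) exactly as required in the two bullets of the lemma. The hypothesis $X_{n-\ell}\le 4(\log n)^{7/4}$ is not used in the per-step estimate itself; it is included because it is the regime in which Claim~\ref{clm:epoch-2-walk-down} will subsequently be fed into Lemma~\ref{lem:random-walk} to force $X_n=0$ by the end of the second epoch.
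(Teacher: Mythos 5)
Your proposal is correct and matches the paper's intended argument, which is only sketched in the text as ``analogous to Claim~\ref{clm:epoch-1-walk-down}, invoking Lemma~\ref{lem:proj-anti-epoch2} and keeping track of $\mc{U}^\ast$ via $\mc{G}_{t+1}'$''. You fill in exactly the right details: the $\mc{F}_{t+1}$-measurable case split over the second-epoch bullets, the observation that $X_t\ge 1$ with $\sigma_t(B_t-zI_t)\ge\eps_1$ is deterministically good, and the two applications of Lemma~\ref{lem:proj-anti-epoch2} (one per projection, the second after further conditioning on $B_{t+1}^{\ast}$) plus a union bound for the remaining cases.

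One small imprecision to be aware of: you write that ``the hypotheses of Lemma~\ref{lem:proj-anti-epoch2}\ldots namely $\sigma_t(B_t-zI_t)\le\eps_1$ together with [the quasirandomness conditions] are secured by $\mc{G}_{t+1}\cap\mc{G}_{t+1}'$.'' The singular-value hypothesis $\sigma_t(B_t-zI_t)\le\eps_1$ is not part of $\mc{G}_{t+1}\cap\mc{G}_{t+1}'$; it is a case condition. When $X_t\ge 1$ with $\sigma_t<\eps_1$ it holds by the case split, and when $X_t=0$ you are only invoking the second bullet of Lemma~\ref{lem:proj-anti-epoch2}, which does not require that hypothesis at all (the $(t)\times(t+1)$ matrix $B_{t+1}^\ast-zI$ always has a nontrivial kernel, so $P_1$ is well-defined unconditionally). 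Similarly, ``assume $\mc{G}_{t+1}'$ holds'' should really be phrased via restricting to the event $\mc{G}_{t+1}'$ inside the expectation over $B_{t+1}^{\ast}$, since $\mc{G}_{t+1}'$ is not $\mc{F}_{t+1}$-measurable; but as you apply the second bullet conditionally on $B_{t+1}^{\ast}$ and integrate, the argument is sound. Neither issue affects the conclusion.
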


\noindent\textbf{Step 4: Proving that $X_n = 0$ whp.}
For $m\le t\le n-\ell$, we define
\[Y_t = X_t\cdot \mbm{1}\bigg[\bigcap_{j\le t} \mc{G}_{j} \cap \bigcap_{j\le t} \mc{J}_{j}\bigg].\]
By Claim~\ref{clm:epoch-1-walk-down}, we have 
\[\mb{P}\big( Y_{t+1} \le Y_t - 1 + \mbm{1}_{Y_t = 0}~\big| \mc{F}_{t+1}\big) \ge 1-C(\log(1/\eps))^{-1/4}\]
if $t\le n-\ell - (\log n)^{7/4}$ and $Y_t>\lfloor n-\ell - t\rfloor/16$. 

Let $Z_t = Y_t$ for $t\le n-\ell - (\log n)^{7/4}$ and $Z_{t+1} = Z_t - 1 + \mbm{1}_{Z_t = 0}$ for $n-\ell - (\log n)^{7/4}\le t\le n-\ell - 1$. By the random walk Lemma~\ref{lem:random-walk} we have 
\[\mb{P}\big(Z_{n-\ell}\ge (\log n)^{7/4}\big)\le n^{-\omega(1)}.\]
This implies that 
\[\mb{P}\big(Y_{n-\ell- (\log n)^{7/4}}\le 3(\log n)^{7/4}\big) \le n^{-\omega(1)}.\]
Note here that when we apply Lemma~\ref{lem:random-walk}, we require that $C(\log(1/\eps))^{-1/4}$ is smaller than an absolute constant which requires $\eps$ to be small as a function of $d$ only.

By Lemmas~\ref{lem:degree-event} and \ref{lem:expansion-epoch1}, and by Claim~\ref{clm:epoch-1-degree}, we have 
\[\mb{P}\big(Y_{n-\ell- (\log n)^{7/4}}\neq X_{n-\ell-(\log n)^{7/4}}\big) \le n^{-1+o(1)}.\]
Therefore
\[\mb{P}\big(X_{n-\ell}\le 4(\log n)^{7/4}\big)\ge 1-n^{-1+o(1)}.\] 

Next we define for $n-\ell\le t\le n$
\[Y_t' = X_t\cdot \mbm{1}\bigg[\bigcap_{n-\ell+1\le j\le t} \mc{G}_{j}\cap \bigcap_{n-\ell+1\le j\le t} \mc{G}_{j}'~\bigg].\]
By Claim~\ref{clm:epoch-2-walk-down} for $n-\ell\le t\le n-1$, we have 
\[\mb{P}\big(Y_{t+1}' \le Y_t' - 1 + \mbm{1}_{Y_t = 0}~\big|\mc{F}_t\big) \ge 1-C(\log n)^{-1/4}.\]
Given $Y_{n-\ell}'\le 4(\log n)^{7/4}$ at the start, which occurs with probability at least $1-n^{-1+o(1)}$, and recalling $\ell=\lfloor(\log n)^2\rfloor$, we have by Lemma~\ref{lem:random-walk} and Markov's inequality that
\[\mb{P}(Y_n'>0)\lesssim_d(\log n)^{-1/8}.\]
By Lemmas~\ref{lem:degree-event}, \ref{lem:expansion-epoch2}, and~\ref{lem:expansion-epoch2-spec} we find 
\[\mb{P}(Y_n'\neq X_n)\le (\log n)^{-\omega(1)}\]
and therefore 
\[\mb{P}(X_n \neq 0)\lesssim_d (\log n)^{-1/8}.\]
This (finally) completes the proof. 
\end{proof}
\begin{remark}\label{rem:quant}
The logarithmic probability guarantee is immediate from the proof given as well as noting that Lemma~\ref{lem:initial-step} can be made sufficiently quantitative. (This is essentially immediate since Lemma~\ref{lem:moment-convergence} holds with sufficiently strong probability.)

For an improved bound on the least singular value, notice that in the above argument we directly considered the product of last $\delta n$ singular values. Proving that $X_{n-\sqrt{n}}\le\sqrt{n}/4$ whp, one can see that the $(\sqrt{n}/4)$th smallest singular value of $B_{n-\sqrt{n}}$ is at least $\exp(-O(\sqrt{n}))$ with high probability. Iteratively applying Proposition~\ref{prop:walk-row-modified} with $\ell=k-1$ allows one to push the index of the unique singular value under consideration until it becomes the least singular value, at the cost of a factor $n^{-O(1)}$ each time. This gives the quality of bound mentioned in the remark following Theorem~\ref{thm:shifted-main}. It remains an interesting question whether the random walk approach used in this paper can prove a least singular value bound with probability quality $n^{-\Omega(1)}$ or with singular value size $\exp(-(\log n)^{O(1)})$. Such singular value estimates may prove useful when considering finer local laws.
\end{remark}

\section{The critical and subcritical case $d\leq 1$}\label{sec:graph-argument}
In this section we take care of the case of $d \leq 1$ which is substantially easier and completely different from the rest of this paper. In fact, we prove the following stronger result which tells us that in the case of $p\leq 1/n$ the limiting spectral distribution is simply a point mass at $0$. We remark that the case $d < 1 $ is also implicit in the work of Coste\cite{Cos23}.

\begin{lemma}\label{lem:subcritical-case}
For $d \leq 1$, let $p = d/n$ and let $A_n$ be a $n\times n$ matrix with iid entires distributed as $\Ber(p)$. Then $0$ is an eigenvalue of $A_n$ with multiplicity $n - o(n)$, with high probability. 
\end{lemma}

\subsection{A deterministic lemma} 

The motivation for the proof of Lemma~\ref{lem:subcritical-case} comes from the following simple deterministic lemma about matrices that are ``almost'' nilpotent.

\begin{lemma}\label{lem:matrix-fact}
For $k,\ell \in \N$, let $A$ be a matrix for which $A^k$ has $\ell$ non-zero rows. Then $A$ has at most $\ell$ non-zero eigenvalues when counted with multiplicity. 
\end{lemma}

To prove this we show that the Jordan blocks of $A$ must be supported on the coordinates corresponding to the non-zero rows of $A^k$.  

\begin{lemma}
For $k,\ell \in \N$, and $\l \not= 0$, let $A$ be a matrix for which all but the first $\ell$ rows of $A^k$ are $0$. Then all the generalized eigenvectors $v$ with eigenvalue $\l$ are supported on $[\ell]$.
\end{lemma}
\begin{proof} Let $v_1,\ldots, v_r$ be the generalized eigenvectors corresponding to a Jordan block with eigenvalue $\l$. This means that 
\begin{equation}\label{eq:gen-e-val}
 Av_1 = \l v_1 \quad \text{ and } \quad Av_s = \l v_s + v_{s-1}, \quad  \text{ for } 2 \leq s \leq r.\end{equation} We prove by induction on $r$ that $v_r$ is supported on $[\ell]$.  For the basis step simply note that $A^kv_1 = \lambda^k v_1$. Thus $v_1$ is supported on $[\ell]$. For general $v_s$, we iterate \eqref{eq:gen-e-val} to obtain 
\[ A^kv_s = \l^k v_s + c_{s-1}v_{s-1} + \cdots + c_1v_1, \]
for some $c_i \in \C$. By induction $v_{s-1}, v_{s-2}, \ldots, v_{1}$, are each supported on $[\ell]$. Also $A^kv_s$ is supported on $[\ell]$. Thus we must have that $v_s$ is also supported on $[\ell]$, since all but the first $\ell$ rows of $A^k$ are zero. This completes the induction. 
\end{proof}

\begin{proof}[Proof of Lemma~\ref{lem:matrix-fact}] Without loss, assume that the first $\ell$ rows are non-zero in $A^k$. Let $\l \not=0$. By the previous lemma we have that all of the generalized eigenvectors of $\l$ are supported on $[\ell]$. By linear independence of the Jordon blocks it follows that $\ell \geq \sum_{\l \not= 0} a_{\lambda}$, where $a_{\l}$ is the algebraic multiplicity of lambda. 
\end{proof}

\subsection{A probabilistic preliminary}

To study our matrix we need a basic fact about critical and subcritical branching processes. Of course, much more general results could be stated here, but we state only what we need. Here we let $X_k$ be a branching process with offspring distribution $\on{Bin}(n,p)$. It is easy to check that 
$\EE\, X_k = (pn)^k$ and thus if $pn \leq 1$ we have
\begin{equation}\label{eq:GW-size-bound} \PP\big( X_1 + \cdots + X_k  \geq k^2 \big) \leq 1/k , \end{equation} by Markov. We also need the following basic fact about the extinction time of this (sub)critical process. Recall the extinction time $\tau$ is the minimum $t$ for which $X_{t} = 0$. 
This follows from ``Kolmogorov's Estimate''; see Theorem 12.7 in \cite{LyonsPeresBook}. 

\begin{fact}\label{fact:branching-processes}
For $d \leq 1$, let $p= d/n$ and let $X_k$ be a branching process with with offspring distribution $\Ber(n,p)$ and extinction time $\tau$. 
If $n$ is sufficiently large, then for each $k \in \N$ we have $\PP( \tau > k ) = O(1/k)$.
\end{fact}

\subsection{Completing the proof of Lemma~\ref{lem:subcritical-case}}

We now turn to think of our matrix as a directed graph $G \sim D(n,p)$,
where $A_{i,j} = 1$ if and only if $(i,j)$ is an edge of $G$. To show that $G$ has the nilpotent property in Lemma~\ref{lem:matrix-fact}, we note that a row $i$ of $A^k$ is $0$ if and only if the vertex $i$, in the corresponding directed graph, has no walk of length $k$ starting at it.
To be precise, a \emph{walk of length} $k$, starting at a vertex $v$ in a graph $G$, is a sequence of (not necessarily distinct) vertices $v = v_0,\ldots,v_{k} $ where the edges $(v_0,v_1),\ldots, (v_{k-1},v_{k})$ are in $G$.

\begin{lemma}\label{lem:predecessors-vertex} For $n \in \N$, let $p \leq 1/n$, $k \leq n^{1/5} $ and let $v$ be a vertex in $D(n,p)$. Then,
\[\PP\big(\text{There is a directed walk of length $k$ starting at v}\hspace{0.5mm} \big) = O(1/k). \] 
\end{lemma}
\begin{proof}
We explore the vertices that can be reached from $v$ along a directed path, in a depth-first search, while \emph{only} exposing forward edges to unexplored vertices. In particular, we never expose edges between vertices that we have ``reached'' already. Let $S$ be the set of vertices reachable from $v$ in this manner and let $S_k$ be the vertices that were discovered at distance $k$ from $v$. 

Now let $X_k$ be a branching process with offspring distribution $\on{Bin}(n,p)$. Carefully note that $|S_k|$ is naturally coupled with $X_k$ so that  $|S_k| \leq X_k$, for all $k$. Let $\tau$ be the extinction time of this branching process.

Let $\prec$ be the linear ordering on $S$ defined by the ordering we discovered vertices in $S$. In the above exposure process, by definition, we never expose an edge $(u,w)$ where $w \prec u$ between two vertices that have already been reached. Define the random variable 
\[ R = |\{ (u,v) \in S^2 : v \prec u \}| \] to be the number of ``backward'' edges in $S$. Note this is well-defined since $S$ only depends on ``forward'' edges.  

Now, if there is a path of length $k$ starting at $v$, either this path is  \emph{increasing} or \emph{not} increasing, in the order $\prec$. In the former case we must have $\tau \geq k$. In the latter, we must have a backwards edge so that $R \geq 1$. Let $\cP_k$ be the event that $v$ has a walk of length $k$ starting at $v$. We have, 
\[\PP\big( \cP_k \big) \leq  \PP\big(  R \geq 1 \text{ and }  \tau < k   \big) + \PP( \tau \geq k ) \leq \PP\big(  R\geq 1 \text{ and } \tau < k  \big) + O(1/k), \] where we used Fact~\ref{fact:branching-processes}. We bound $\PP( \{R\geq 1\} \cap \{ \tau < k \} )$ by writing 
\begin{equation}\label{eq:split2}  \PP( R\geq 1 \text{ and } \tau < k  ) \leq \PP\big( \tau < k  \text{ and }  |S| > k^2  \big) + \PP\big( R \geq 1 \text{ and }  |S| \leq k^2  \big)\end{equation}
and then writing 
\[ \PP\big( \tau < k  \text{ and }  |S| > k^2  \big) \leq \PP\big( X_1 + \cdots + X_{k-1} > k^2 \big) = O(1/k), \]
where the last bound holds by \eqref{eq:GW-size-bound}. To take care of the second term in \eqref{eq:split2}, we first note that for given $S$ with $|S| \leq k^2$ then $S$ induces at most $k^4$ edges, and 
\[ \EE\big( R \, \big|\, S\big) \leq k^4p \leq 1/k, \]
since $k \leq n^{1/5}$, and $S$ only depends on the forward edges in the exposure. Thus
\[ \PP\big( R\geq 1 \text{ and } |S| \leq k^2  \big) \leq \max_{S : |S| \leq k^2, \tau < k}  
\PP\big( R \geq 1 \, \big|\,  S \big)  \leq  \max_{S : |S| \leq k^2, \tau < k}  \EE\big( R \, \big|\, S\big) \leq 1/k. \]\\
Putting the above together, gives $\PP(\cP_k) = O(1/k)$, as desired.
\end{proof}

\vspace{2mm}

We can now easily prove Lemma~\ref{lem:subcritical-case}, which is the goal of this section. 

\begin{proof}[Proof of Lemma~\ref{lem:subcritical-case}]
Let $W_k$ be the number of vertices in $G$ that have a directed walk of length $k$ starting at $v$. By Lemma~\ref{lem:predecessors-vertex}, we have $\EE\, W_k = O(n/k) $ and 
thus $\PP( W_k \geq n/\sqrt{k} ) = O(k^{-1/2})$, by Markov. Thus the matrix $A^k$ has at most $O(n/\sqrt{k})$ non-zero rows whp. Thus if we take $k = \lceil \log n \rceil$ and apply Lemma~\ref{lem:matrix-fact} to $A^k$, we learn that $A$ has at most $o(n)$ non-zero eigenvalues, counting multiplicity, whp. 
\end{proof}

\section{Proof of Theorem~\ref{thm:main}}\label{sec:proof}

In this short section we prove our main theorem, when $d >1$, thus completing the proof of Theorem~\ref{thm:main}. We do this by putting together Lemma~\ref{lem:crucial}, on the bottom window of the singular values, together with a convergence criterion for the spectral measure. 

We begin this section by stating the criterion we use from the work of Bordenave and Chafa\"{\i} \cite[Lemma~4.3, Remark~4.4]{BC12}. (We remark that related criteria appear in work of Tao and Vu \cite{TV10}.) Recall that if $M$ is an $n\times n$ matrix, then spectral and the singular value measures are defined to be
\[\mu_M = \frac{1}{n}\sum_{i=1}^n \delta_{\lambda_i} \qquad \text{ and } \qquad \nu_M = \frac{1}{n}\sum_{i=1}^n\delta_{\sigma_i},\]
where the $\l_i$ are the eigenvalues of $M$ and the s$\sigma_i$ are the singular values.  We also recall that if $\mu_n$ are random measures and $\mu$ is a deterministic measure we use the notation $\mu_{n}\rightsquigarrow\mu$ to mean that $\mu_n$ converges in probability to $\mu$.

\begin{prop}\label{prop:unicity}
For each $n$, let $M_n$ be a $n\times n$ random matrix and, for each $z \in \C$, let $\nu_z$ be a deterministic probability measure on $\mb{R}^{+}$. Assume that for almost all $z \in \C$ we have 
\[ \nu_{M_n-zI}\rightsquigarrow \nu_{z}  \]
and for almost all $z \in \C$ and all $\eps>0$ we have 
\[ \lim_{t\to+\infty}\limsup_{n\to \infty}\mb{P}\bigg(\int_{|\log u|\ge t}|\log u|~d\nu_{M_n-zI}(u) > \eps\bigg) = 0.\]
Then there exists a probability measure $\mu$ on $\mb{C}$ such that 
\[\mu_{M_n}\rightsquigarrow\mu.\]
\end{prop}

We now prove Theorem~\ref{thm:main} using our key lemma, Lemma~\ref{lem:crucial} along along with the above criteria. 

\begin{proof}[Proof of Theorem~\ref{thm:main}]
First note that we may assume that $d>1$, since the case $d\leq 1$ is taken care of by Lemma~\ref{lem:subcritical-case}.

Let $A$ and $B$ be as in Section~\ref{sec:setup}. For all $z\in \mb{C}$, by Lemma~\ref{lem:converge-singular}, Lemma~\ref{lem:TV-estimate}, and noting that permutation matrices are unitary it follows that $\nu_{A-zI}\rightsquigarrow \nu_{|z|}$. 

We now verify the crucial uniform integrability condition in Proposition~\ref{prop:unicity}. Fix $\eps = \mb{P}[\on{Pois}(d)\ge \Delta]> 0$. For almost all $z\in \mb{C}$, by Lemma~\ref{lem:crucial} there is $\delta = \delta(\eps,z) > 0$ such that 
\[\lim_{n\to\infty} \mb{P}\bigg(\frac{1}{n}\sum_{j=0}^{\delta n}\log(\sigma_{n-j}(B-zI)) \le -C\eps \bigg) = 0.\]
By the strong law of large numbers, whp $\snorm{B}_{\mr{HS}}^2 \le 2dn$. Thus $\sigma_{n/2}(B-zI)^2\le (\sigma_{n/2}(B)+|z|)^2\le 2 \sigma_{n/2}(B)^2 + 2|z|^2\le 2 \cdot (2/n) \snorm{B}_{\mr{HS}}^2 + 2|z|^2\le 8(dn + |z|^2)$. Therefore we have
\[\lim_{n\to\infty}\mb{P}\bigg(\frac{1}{n}\sum_{j=1}^{n}\log(\s_j(B-zI))\mbm{1}_{\sigma_j(B-zI)\le \exp(-2C\eps/\delta)}\le -C\eps -\delta \cdot \log(8(d +|z|^2))\bigg) = 0;\]
this is the crucial estimate controlling the lower tail. For the large values of the logarithm, note that $(\log x)\mbm{1}_{x\ge T}\le\frac{x^2}{T}$ for $T\ge 1$. Therefore if $\snorm{B}_{\mr{HS}}^2 \le 2dn$ then 
\begin{align*}
\frac{1}{n}\sum_{j=1}^{n}\log(\s_j(B-zI)) \mbm{1}_{\s_j(B-zI)\ge \eps^{-1}(d + |z|^2)} &\le \frac{1}{n(\eps^{-1}(d + |z|^2))}\sum_{j=1}^{n}\s_j(B-zI)^2\\
&\le \frac{2(\snorm{B}_{\mr{HS}}^2 + |z|^2n)}{n(\eps^{-1}(d + |z|^2))}\le 4\eps.
\end{align*}
Therefore for almost all $z\in \mb{C}$, there exists $T = T(z,\eps) > 0$ such that 
\[\lim_{n\to\infty}\mb{P}\bigg(\frac{1}{n}\sum_{j=1}^{n}|\log(\s_j(B-zI))|\mbm{1}_{|\log(\s_j(B-zI))|\ge T}\ge (C+4)\eps + \delta\log(4d + |z|^2)\bigg) = 0.\]
By Lemma~\ref{lem:TV-estimate}, we therefore have 
\[\lim_{n\to\infty}\mb{P}\bigg(\frac{1}{n}\sum_{j=1}^{n}|\log(\s_j(A-zI))|\mbm{1}_{|\log(\s_j(A-zI))|\ge T}\ge (C+4)\eps + \delta\log(4d + |z|^2)\bigg) = 0.\]
Taking the countable sequence of possible $\eps$ tending to $0$ given by taking $\Delta\to\infty$ and recalling $\delta(\eps,z)\to 0$ as $\eps\to 0$ verifies the second condition of Proposition~\ref{prop:unicity}. The result follows.
\end{proof}

\section*{Acknowledgements}
This project was carried out, in part, while MS was visiting Cambridge in the 2022-2023 academic year. MS would like to thank the Cambridge combinatorics community for creating a pleasant working environment during this time. We would also like to thank Marcus Michelen for some comments on a draft of this paper. 

\appendix

\section{Various preliminaries}\label{app:preliminaries}
\subsection{Directed graphs, bipartite graphs, and matrices}\label{sub:digraph-notation}
Throughout this proof, we will pass freely between the notions of matrices, bipartite graphs, and digraphs. Given an $m\times n$ matrix $M$ with entries in $\{0,1\}$, we may identify this bipartite graph with $n$ vertices on the left and $m$ vertices on the right and an edge between $j\in[n]$ and $i\in[m]$ if and only if $M_{ij} = 1$. If $m\le n$, we may identify $M$ with a digraph by adding $n-m$ empty right vertices, directing all edges from left to right, and gluing corresponding vertices in the obvious manner. We write $\deg_M^+(v,S)$ for the number of out-neighbors $v$ has in $S$ (including a self-loop), and similar for $\deg_M^-(v,S)$; we drop $S$ to refer to the total out- or in-degree. The \emph{degree sequence} of $M$ (or equivalently, of the corresponding bipartite graph or digraph) is $(\mbf d,\mbf d')$ where $\mbf d=(\deg_M^+(v))_{v\in[n]}$ and $\mbf d'=(\deg_M^-(v))_{v\in[m]}$.

For the entire paper we will concern ourselves with matrices such that $n\in\{m,m+1\}$. Let $I_{m\times n}$ denote the matrix such that $(I_{m\times n})_{ij} = 1$ if $i = j\le\min(m,n)$ and $0$ otherwise; this aligns with the standard definition of the identity matrix for square matrices. The matrix $M-zI_{m\times n}$ can be identified as a weighted bipartite graph with $n$ vertices on the left and $m$ vertices on the right (with possible weights of $0,1,-z,1-z$).

\subsection{Concentration inequalities}\label{sub:concentration}
We state a Chernoff bound for binomial and hypergeometric distributions (see for example \cite[Theorems~2.1,~2.10]{JLR00}).

\begin{lemma}[Chernoff bound]\label{lem:chernoff}
Let $X$ be either:
\begin{itemize}
    \item a sum of independent random variables, each of which take values in $\{0,1\}$, or
    \item hypergeometrically distributed (with any parameters).
\end{itemize}
Then for any $\delta>0$ we have
\[\Pr[X\le (1-\delta)\mb{E}X]\le\exp(-\delta^2\mb{E}X/2),\qquad\Pr[X\ge (1+\delta)\mb{E}X]\le\exp(-\delta^2\mb{E}X/(2+\delta)).\]
\end{lemma}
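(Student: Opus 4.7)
The plan is to handle both cases by the standard exponential moment method, applying Markov's inequality to $e^{tX}$ and optimizing over $t$. For the independent case, I would first write $\mu = \mb{E}X$ and decompose $X = \sum X_i$ with $X_i \in \{0,1\}$ independent of means $p_i$; then $\mb{E}[e^{tX_i}] = 1 + p_i(e^t-1) \le \exp(p_i(e^t-1))$, and independence yields $\mb{E}[e^{tX}] \le \exp(\mu(e^t-1))$. For the upper tail, the choice $t = \log(1+\delta) > 0$ and Markov's inequality give $\Pr[X \ge (1+\delta)\mu] \le \exp(\mu(\delta - (1+\delta)\log(1+\delta)))$, and the elementary inequality $(1+\delta)\log(1+\delta) - \delta \ge \delta^2/(2+\delta)$ (checked by differentiating and comparing at $\delta = 0$) delivers the stated bound. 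The lower tail is symmetric with $t < 0$, where the slightly sharper exponent $\delta^2/2$ comes from the bound $(1-\delta)\log(1-\delta) + \delta \ge \delta^2/2$ on $(0,1)$.

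For the hypergeometric case, I plan to reduce to the independent case via Hoeffding's classical MGF domination: if $X$ counts successes among $n$ draws without replacement from a population of size $N$ containing $K$ successes, then for every convex $\varphi$, $\mb{E}[\varphi(X)] \le \mb{E}[\varphi(Y)]$ where $Y \sim \on{Bin}(n, K/N)$. Applied with $\varphi(x) = e^{tx}$, this reduces the hypergeometric MGF bound to the binomial one already controlled, so the rest of the Markov argument goes through without change. Hoeffding's observation itself can be proved by writing sampling without replacement as an average over the symmetric group of the first $n$ coordinates of a uniformly shuffled $\{0,1\}$-sequence with exactly $K$ ones, and invoking Jensen's inequality on this convex combination; alternatively, one can argue via a Doob martingale on the sequence of draws and use Azuma or a direct comparison of one-step MGFs.

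There is no serious obstacle here; the entire argument is the standard textbook derivation, and both the optimization step and the MGF domination are documented in \cite{JLR00}. The only point requiring mild care will be ensuring the correct constants $2$ versus $2+\delta$ in the denominators of the two tails, which emerges from the precise elementary inequalities noted above, but this is routine and identical in both the independent and hypergeometric settings.
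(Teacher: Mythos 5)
The paper does not prove this lemma; it simply cites \cite[Theorems~2.1,~2.10]{JLR00}. Your proposal is the standard and correct derivation of exactly those results: the exponential‑moment (Chernoff) argument for sums of independent $\{0,1\}$ variables, with the elementary calculus inequalities $(1+\delta)\log(1+\delta)-\delta\ge\delta^2/(2+\delta)$ for the upper tail and $(1-\delta)\log(1-\delta)+\delta\ge\delta^2/2$ for the lower tail, followed by the reduction of the hypergeometric case via Hoeffding's MGF‑domination (convex‑order) inequality. Both ingredients are correct and this is essentially the proof given in the cited reference, so there is nothing to compare against or fill in.

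One minor remark: the second tail bound as stated in the lemma, $\Pr[X\ge(1+\delta)\mb{E}X]\le\exp(-\delta^2\mb{E}X/(2+\delta))$, is typically stated in \cite{JLR00} with $3$ in place of $2+\delta$ (or with the form $\exp(-\delta^2\mb{E}X/(2(1+\delta/3)))$, which is the same thing); your $\delta^2/(2+\delta)$ is the slightly weaker but still correct variant coming directly from the crude inequality you quote, and it suffices for all uses in the paper. Your derivation delivers exactly the bound stated, so everything is consistent.
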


We will require a version of the classical Bernstein inequality; this appears as \cite[Theorem~2.8.1]{Ver18}.
\begin{theorem}\label{thm:bernstein}
For a random variable $X$ define the $\psi_1$-norm
\[\snorm{X}_{\psi_1}=\inf\{t>0\colon\mb{E}[\exp(|X|/t)]\le 2\}.\]
There is an absolute constant $c > 0$ such that the following holds. If $X_1,\ldots,X_N$ are independent random variables then 
\[\mb{P}\bigg(\bigg|\sum_{i=1}^NX_i\bigg|\ge t\bigg)\le 2\exp\bigg(-c\min\bigg(\frac{t^2}{\sum_{i=1}^N\snorm{X_i}_{\psi_1}^2},\frac{t}{\max_i\snorm{X_i}_{\psi_1}}\bigg)\bigg)\]
for all $t\ge 0$.
\end{theorem}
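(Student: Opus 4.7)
The plan is to apply the standard Chernoff / exponential-moment approach, driven by an MGF bound derived from the $\psi_1$-norm. Note that the theorem as stated implicitly requires each $X_i$ to be mean-zero (otherwise it already fails for a deterministic $X_i=1$); I assume this standard hypothesis throughout. First I would reduce to a one-sided estimate by applying the argument to both $(X_i)_i$ and $(-X_i)_i$ and taking a union bound, losing only the factor of $2$ in front of the exponential.

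The key analytic input is the following MGF bound: for a centered $X$ with $\snorm{X}_{\psi_1}\le K$, there exist absolute constants $c_0,C>0$ such that $\mb{E}\exp(\lambda X)\le\exp(C\lambda^2K^2)$ whenever $|\lambda|\le c_0/K$. To derive this, I would use the definition of the $\psi_1$-norm: expanding $\mb{E}\exp(|X|/K)\le 2$ as a power series yields moment bounds $\mb{E}|X|^p\le 2p!\,K^p$ for every $p\ge 1$. Then expanding $\mb{E}\exp(\lambda X)$ as a power series in $\lambda$, the linear term vanishes since $\mb{E}X=0$, and for $|\lambda|K$ sufficiently small the remaining terms form a geometric-type series dominated by the quadratic term, yielding the claimed bound.

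With the MGF bound in hand, Chernoff's method gives for any $\lambda\in(0,c_0/M]$,
\[\mb{P}\Big(\sum_iX_i\ge t\Big)\le e^{-\lambda t}\prod_i\mb{E}e^{\lambda X_i}\le\exp\big(-\lambda t+C\lambda^2\sigma^2\big),\]
where $\sigma^2=\sum_i\snorm{X_i}_{\psi_1}^2$ and $M=\max_i\snorm{X_i}_{\psi_1}$. Optimizing over the feasible range of $\lambda$ produces the two regimes: the unconstrained optimum $\lambda^\ast=t/(2C\sigma^2)$ is feasible when $t\lesssim\sigma^2/M$, yielding the sub-Gaussian bound $\exp(-\Omega(t^2/\sigma^2))$; when $t\gtrsim\sigma^2/M$ the feasibility constraint binds and the choice $\lambda=c_0/M$ gives the sub-exponential bound $\exp(-\Omega(t/M))$. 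Taking the worse of the two cases yields the minimum in the exponent of the target bound.

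Since this is a classical inequality, there is no real obstacle beyond careful bookkeeping of constants; the genuine feature being captured is that independent sub-exponential summands produce a two-regime tail bound, sub-Gaussian in the moderate-deviation range and exponential in the large-deviation range. As a result, in practice one simply defers to the reference cited in the paper.
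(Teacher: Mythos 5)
Your proof is correct and is essentially the standard argument; the paper does not prove this statement but simply cites it as \cite[Theorem~2.8.1]{Ver18}, and your route (moment bounds from the $\psi_1$ definition, an MGF bound $\mb{E}e^{\lambda X}\le e^{C\lambda^2 K^2}$ valid for $|\lambda|\le c_0/K$, then Chernoff with the two-regime optimization) is exactly the proof given there. You are also right that the mean-zero hypothesis is missing from the statement as written in the paper and is needed; Vershynin's Theorem~2.8.1 does include it, so this is a harmless transcription omission on the paper's part, not a gap in the result.
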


We will also require a standard concentration inequality for Lipschitz functions with respect to the symmetric group (and with respect to injections); this appears as \cite[Lemma~3.3]{FKSS23}.

\begin{lemma}\label{lem:injection-concentration}
Let $m\in\mb{N}$, let $S$ be a finite set with $|S| \ge m$, let $\mc{F}$ be the set of functions $\{1,\ldots,m\}\to S$ and let $\mc{I}\subseteq\mc{F}$ be the set of injections $\{1,\ldots,m\}\to S$. Consider a function $f\colon\mc{F}\to\mb{R}$ with the property $|f(\pi)-f(\pi')|\le \sum_{i=1}^mc_i\mbm{1}_{\pi(i)\neq\pi'(i)}$.
Let $\pi\in\mathcal{I}$ be a uniformly random injection. Then for $t\ge 0$,
\[\mb{P}(|f(\pi)-\mb{E}f(\pi)|\ge t)\le 2\exp\bigg(-\frac{t^2}{8\sum_{i=1}^mc_i^2}\bigg).
\]
\end{lemma}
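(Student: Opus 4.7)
I would prove this via a Doob-martingale argument combined with a swap coupling, followed by Azuma--Hoeffding. Set $D_i = \mathbb{E}[f(\pi)\mid \pi(1), \ldots, \pi(i)]$ for $0 \le i \le m$, so that $D_0 = \mathbb{E}f(\pi)$, $D_m = f(\pi)$, and $(D_i)$ is a martingale with respect to the natural filtration. The target exponent $t^2/(8\sum c_i^2)$ corresponds (via Azuma--Hoeffding) to a martingale-difference bound of the form $|D_i - D_{i-1}| \le 2c_i$ almost surely.

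To bound the increments, fix $\pi(1), \ldots, \pi(i-1)$ and write $g(s) = \mathbb{E}[f(\pi)\mid \pi|_{[i-1]}, \pi(i) = s]$; it suffices to show $|g(s) - g(s')| \le 2c_i$ for any two admissible values $s, s'$. The key tool is a \emph{value-swap} coupling: given a uniform extension $\pi$ of $(\pi(1), \ldots, \pi(i-1), s)$, define a coupled extension $\pi'$ with $\pi'(i) = s'$ by exchanging $s$ and $s'$ in $\pi$---if $s' = \pi(j)$ for some $j > i$ then swap the values at positions $i$ and $j$, and otherwise (which is possible precisely because $|S| \ge m$ leaves room for $s'$ to be unused) set $\pi'(i) = s'$ and $\pi'(k) = \pi(k)$ for $k \ne i$. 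The map $\pi \mapsto \pi'$ is a measure-preserving bijection between the two conditional laws, and it modifies at most two coordinates of $\pi$, so the Lipschitz hypothesis gives $|f(\pi) - f(\pi')| \le c_i + c_J$, where $J$ denotes the swap location (or $i$ if no swap). Averaging over the coupling yields $|g(s) - g(s')| \le c_i + \mathbb{E}[c_J]$.

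Converting this into the clean bound $|D_i - D_{i-1}| \le 2c_i$ is the delicate point, since a pointwise application would introduce $\max_{j > i} c_j$, which is too crude for the constant $8$. I would handle this either by symmetrizing the Doob filtration---revealing the coordinates of $\pi$ in an independently chosen uniformly random order, so that each $c_j$ is charged to its own step in an averaged rather than worst-case sense---or by invoking a Freedman-type martingale concentration inequality that tolerates predictable random increment bounds. Either route combined with Azuma--Hoeffding yields $\mathbb{P}(|f(\pi) - \mathbb{E}f(\pi)| \ge t) \le 2\exp(-t^2/(8\sum c_i^2))$.

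\textbf{Main obstacle.} The core technical step is extracting the clean constant in the martingale-increment bound: the swap coupling naturally produces differences of the form $c_i + c_J$ with a random location $J > i$, and reducing this to a $2c_i$ worst-case bound (or an equivalent square-summed analogue) is the heart of the argument. Every other ingredient---construction of the Doob martingale, measure-preservation of the swap bijection, and the Azuma--Hoeffding step---is essentially standard.
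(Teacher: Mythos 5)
The paper does not prove this lemma itself; it cites \cite[Lemma~3.3]{FKSS23}, so the comparison is against the standard argument underlying that citation. Your overall skeleton is exactly right: a Doob martingale $D_k=\mb{E}[f(\pi)\mid\pi(i_1),\ldots,\pi(i_k)]$, a value-swap coupling that changes at most two coordinates, the observation that this swap is a measure-preserving bijection between the two conditional laws, and a final application of Azuma--Hoeffding. You have also correctly isolated the only real difficulty: the swap partner $J$ lies among the \emph{not-yet-revealed} positions, so the pointwise increment bound is $c_{i_k}+c_J$ rather than $2c_{i_k}$, and $c_J$ has no reason to be small.

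The gap is that neither of your two proposed repairs closes this. Revealing coordinates in a uniformly random order does not help: the swap partner is determined by which unrevealed position carries the value $s'$, not by the size of its Lipschitz constant, so conditioning on a random order you still face $c_{i_k}+\max_{l>k}c_{i_l}$ as the almost-sure increment, which is useless. Freedman's inequality (or any variant tolerating predictable random increment bounds) produces an extra linear-in-$t$ term in the denominator and will not reproduce the clean Gaussian tail $\exp(-t^2/(8\sum c_i^2))$ with the stated constant. The actual fix is much simpler and deterministic: reveal the coordinates in \emph{non-increasing order of $c_i$}, i.e.\ choose $i_1,\ldots,i_m$ so that $c_{i_1}\ge c_{i_2}\ge\cdots\ge c_{i_m}$. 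Then the swap partner $J\in\{i_{k+1},\ldots,i_m\}$ automatically satisfies $c_J\le c_{i_k}$, so $|D_k-D_{k-1}|\le 2c_{i_k}$ holds pointwise, and Lemma~\ref{lem:azuma} with $\sum_k(2c_{i_k})^2=4\sum_i c_i^2$ gives exactly $2\exp(-t^2/(8\sum_i c_i^2))$. Everything else in your proposal (including the observation that $|S|\ge m$ permits the no-swap branch) is sound once this ordering is put in place.
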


Finally we will require the Azuma--Hoeffding inequality (see \cite[Theorem~2.25]{JLR00}).
\begin{lemma}[Azuma--Hoeffding inequality]\label{lem:azuma}
Let $X_0, \ldots, X_n$ form a martingale sequence such that $|X_k-X_{k-1}|\le c_k$ almost surely. Then 
\[\mb{P}(|X_0-X_n|\ge t)\le 2\exp\bigg(-\frac{t^2}{2\sum_{k=1}^nc_k^2}\bigg)\]
\end{lemma}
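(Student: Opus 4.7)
The plan is to use the standard exponential moment (Chernoff--Cram\'er) method applied to the martingale differences, combined with Hoeffding's lemma to control each conditional moment generating function. Concretely, set $D_k := X_k - X_{k-1}$, so $|D_k| \le c_k$ almost surely and $\mb{E}[D_k \mid \cF_{k-1}] = 0$ by the martingale property.

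The key input is Hoeffding's lemma applied conditionally: for any real random variable $Y$ bounded in an interval $[a,b]$ with $\mb{E}[Y \mid \cG] = 0$, one has $\mb{E}[e^{\lambda Y} \mid \cG] \le \exp(\lambda^2 (b-a)^2/8)$. This follows by writing $e^{\lambda y}$ as at most its chord on $[a,b]$ using convexity, taking conditional expectation, and checking via a derivative computation that the logarithm of the resulting function of $\lambda$ is bounded by $\lambda^2(b-a)^2/8$. Applying this with $Y = D_k$ bounded in $[-c_k, c_k]$ yields the conditional sub-Gaussian estimate $\mb{E}[e^{\lambda D_k} \mid \cF_{k-1}] \le \exp(\lambda^2 c_k^2 / 2)$.

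Next, by writing $X_n - X_0 = \sum_{k=1}^n D_k$, conditioning on $\cF_{n-1}$, pulling the $\cF_{n-1}$-measurable factor $e^{\lambda (X_{n-1}-X_0)}$ outside, applying the above bound, and iterating, one obtains $\mb{E}[e^{\lambda (X_n - X_0)}] \le \exp\bigl(\lambda^2 \sum_{k=1}^n c_k^2/2\bigr)$. Markov's inequality then gives
\[
\mb{P}(X_n - X_0 \ge t) \le e^{-\lambda t} \mb{E}[e^{\lambda (X_n - X_0)}] \le \exp\Bigl(-\lambda t + \tfrac{\lambda^2}{2}\textstyle\sum_k c_k^2\Bigr),
\]
and optimizing by choosing $\lambda = t / \sum_k c_k^2$ produces the bound $\exp(-t^2/(2\sum_k c_k^2))$.

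Finally, the lower tail $\mb{P}(X_0 - X_n \ge t)$ is handled by repeating the entire argument for the martingale $-X_k$, whose increments satisfy the same bound $c_k$. A union bound over the two tails yields the factor of $2$. There is no substantive obstacle here since the statement is classical; the only nontrivial ingredient is Hoeffding's lemma, and the rest is routine iterated conditioning.
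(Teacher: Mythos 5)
Your proof is correct and is the standard Chernoff--Cram\'er argument with Hoeffding's lemma supplying the conditional moment generating function bound; the paper itself does not prove this lemma but merely cites \cite[Theorem~2.25]{JLR00}, which presents the same classical argument.
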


\subsection{Configuration model}\label{sub:config-model}
We will also require the definition of the configuration model for a bipartite graph. 
\begin{definition}\label{def:config-model}
Consider a pair of degree sequences $\mbf d=(d_1,\ldots,d_{n})$ and $\mbf d'=(d_1',\ldots,d_{m}')$ such that $\sum d_i = \sum d_i'$. Consider a set of $r=\sum d_i + \sum d_i'$ ``stubs'', $n$ left buckets, and $m$ right buckets. Assign $d_i$ stubs to the $i$th left bucket and $d_i'$ stubs to the $i$th right bucket. A \emph{configuration} is a perfect matching between the $r/2$ stubs assigned to the left buckets and $r/2$ stubs assigned to right buckets.  Given a configuration, contracting each of the buckets to a single vertex gives rise to a bipartite multigraph with degree sequence $d_1,\ldots,d_n$ in the left and $d_1',\ldots,d_m'$ in the right.

A random bipartite graph $G$ drawn from the \emph{configuration model} with degree sequences $\mb{G}(\mbf d, \mbf d')$ is the bipartite multigraph arising from choosing the perfect matching between the left and right stubs uniformly at random.
\end{definition}

Note that we may implicitly identify vertices on the left and right by identifying the $i$th vertex on the left and $i$th vertex on the right to obtain a digraph as in Appendix~\ref{sub:digraph-notation}. We have the following fact regarding the configuration model; the first is obvious by construction while the second is an immediate consequence of the results of Janson \cite{Jan14} (although many earlier results e.g. \cite{BC78,Bol80,MW91} would suffice). 

\begin{lemma}\label{lem:config}
Sample $G\sim \mb{G}(\mbf d, \mbf d')$. 
\begin{itemize}
    \item Conditioned on being simple, $G$ is a uniformly random bipartite graph with degree sequence $\mbf d$ on the left and $\mbf d'$ on the right.
    \item If $d_i,d_i'$ are positive integers and $\sum d_i^2 + \sum {d_i'}^2\le C(n + m)$ and $n/C\le m\le Cn$, we have that $G$ is simple with probability $\Omega_{C}(1)$.
\end{itemize}  
\end{lemma}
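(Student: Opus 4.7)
\textbf{Plan for Lemma~\ref{lem:config}.} For the first bullet, the plan is a direct double-counting argument. Fix any simple bipartite (multi)graph $H$ with left degree sequence $\mbf d$ and right degree sequence $\mbf d'$. The number of configurations (i.e.\ perfect matchings of stubs) that collapse to $H$ is exactly $\prod_i d_i! \cdot \prod_j d_j'!$: the stubs inside each bucket can be permuted freely without changing the underlying edge set, and because $H$ is simple no edge has multiplicity $>1$ so no additional overcounting occurs. Since this count is the same constant for every simple $H$ with the prescribed degree sequence, conditioning on simplicity produces the uniform distribution on such simple graphs. This bullet requires no hypothesis beyond $\sum d_i = \sum d_j'$.

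For the second bullet, the plan is to show the expected number of multi-edges in the bipartite configuration model is $O_C(1)$ and then apply a standard Poisson convergence / lower-bound result (as in Janson \cite{Jan14}; classical versions go back to \cite{BC78, Bol80, MW91}). In the bipartite setup there are no loops, so the only obstruction to simplicity is multi-edges. Writing $r/2 = \sum_i d_i = \sum_j d_j'$ for the number of matched pairs, the expected number of pairs of parallel stubs joining left bucket $i$ and right bucket $j$ is
\[
\binom{d_i}{2}\binom{d_j'}{2} \cdot \frac{2}{(r/2)(r/2-1)},
\]
so by linearity the expected number of multi-edge incidences is
\[
\Lambda \;\lesssim\; \frac{\bigl(\sum_i d_i(d_i-1)\bigr)\bigl(\sum_j d_j'(d_j'-1)\bigr)}{(r/2)^2} \;\lesssim_C\; \frac{(n+m)(n+m)}{(r/2)^2}.
\]
Under the hypotheses $n/C \le m \le Cn$ and $\sum d_i^2 + \sum (d_j')^2 \le C(n+m)$, Cauchy--Schwarz gives $r/2 = \sum_i d_i \ge \sum_i 1 = n$ (using $d_i \ge 1$), so $\Lambda = O_C(1)$. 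A standard second-moment / Poisson approximation (Janson's theorem, applied in the bipartite case) then yields that the number of multi-edges is asymptotically Poisson of mean $\Lambda/2$, whence $\Pr(\text{simple}) \ge \exp(-\Lambda) - o(1) = \Omega_C(1)$.

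The main step is really the second bullet, and the only subtlety is ensuring one has a bipartite version of Janson's simplicity result available; this is explicit in \cite{Jan14}, so rather than redo the moment computations one simply cites it with the stated verification that $\Lambda = O_C(1)$. No other technical obstacles arise, which is why the paper states the result without proof.
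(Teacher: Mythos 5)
Your proof is correct and follows exactly the route the paper itself indicates: the first bullet is the standard double-counting of stub-permutations ($\prod_i d_i!\prod_j d_j'!$ configurations per simple graph), and the second is the first-moment bound $\Lambda = O_C(1)$ on expected multi-edges combined with Janson's Poisson-approximation result for the (bipartite) configuration model, which the paper simply cites without reproducing. One trivial slip: the bound $r/2=\sum_i d_i\ge n$ follows directly from $d_i\ge 1$ and is not an application of Cauchy--Schwarz.
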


\bibliographystyle{amsplain0}
\bibliography{main}

\providecommand{\bysame}{\leavevmode\hbox to3em{\hrulefill}\thinspace}
\providecommand{\MR}{\relax\ifhmode\unskip\space\fi MR }
\providecommand{\MRhref}[2]{%
  \href{http://www.ams.org/mathscinet-getitem?mr=#1}{#2}
}
\providecommand{\href}[2]{#2}
\begin{thebibliography}{10}

\bibitem{Bai97}
Z.~D. Bai, \emph{Circular law}, Ann. Probab. \textbf{25} (1997), 494--529.

\bibitem{BR19}
Anirban Basak and Mark Rudelson, \emph{The circular law for sparse
  non-{H}ermitian matrices}, Ann. Probab. \textbf{47} (2019), 2359--2416.

\bibitem{BC78}
Edward~A. Bender and E.~Rodney Canfield, \emph{The asymptotic number of labeled
  graphs with given degree sequences}, J. Combinatorial Theory Ser. A
  \textbf{24} (1978), 296--307.

\bibitem{Bol80}
B\'{e}la Bollob\'{a}s, \emph{A probabilistic proof of an asymptotic formula for
  the number of labelled regular graphs}, European J. Combin. \textbf{1}
  (1980), 311--316.

\bibitem{BCC14}
Charles Bordenave, Pietro Caputo, and Djalil Chafa\"{\i}, \emph{Spectrum of
  {M}arkov generators on sparse random graphs}, Comm. Pure Appl. Math.
  \textbf{67} (2014), 621--669.

\bibitem{BC12}
Charles Bordenave and Djalil Chafa\"{\i}, \emph{Around the circular law},
  Probab. Surv. \textbf{9} (2012), 1--89.

\bibitem{Cos23}
Simon Coste, \emph{Sparse matrices: convergence of the characteristic
  polynomial seen from infinity}, Electron. J. Probab. \textbf{28} (2023),
  Paper No. 8, 40.

\bibitem{CTV06}
Kevin~P. Costello, Terence Tao, and Van Vu, \emph{Random symmetric matrices are
  almost surely nonsingular}, Duke Math. J. \textbf{135} (2006), 395--413.

\bibitem{Dur19}
Rick Durrett, \emph{Probability---theory and examples}, Cambridge Series in
  Statistical and Probabilistic Mathematics, vol.~49, Cambridge University
  Press, Cambridge, 2019, Fifth edition.

\bibitem{Ede97}
Alan Edelman, \emph{The probability that a random real {G}aussian matrix has
  {$k$} real eigenvalues, related distributions, and the circular law}, J.
  Multivariate Anal. \textbf{60} (1997), 203--232.

\bibitem{FKSS23}
Asaf Ferber, Matthew Kwan, Ashwin Sah, and Mehtaab Sawhney, \emph{Singularity
  of the k-core of a random graph}, Duke Mathematical Journal \textbf{1}
  (2023), 1--40.

\bibitem{FKS22}
Asaf Ferber, Matthew Kwan, and Lisa Sauermann, \emph{Singularity of sparse
  random matrices: simple proofs}, Combin. Probab. Comput. \textbf{31} (2022),
  21--28.

\bibitem{Gir84}
V.~L. Girko, \emph{The circular law}, Teor. Veroyatnost. i Primenen.
  \textbf{29} (1984), 669--679.

\bibitem{GKSS23}
Margalit Glasgow, Matthew Kwan, Ashwin Sah, and Mehtaab Sawhney, \emph{The
  exact rank of sparse random graphs}, arXiv:2303.05435.

\bibitem{GT10}
Friedrich G\"{o}tze and Alexander Tikhomirov, \emph{The circular law for random
  matrices}, Ann. Probab. \textbf{38} (2010), 1444--1491.

\bibitem{Jan14}
Svante Janson, \emph{The probability that a random multigraph is simple. {II}},
  J. Appl. Probab. \textbf{51A} (2014), 123--137.

\bibitem{JLR00}
Svante Janson, Tomasz {L}uczak, and Andrzej Rucinski, \emph{Random graphs},
  Wiley-Interscience Series in Discrete Mathematics and Optimization,
  Wiley-Interscience, New York, 2000.

\bibitem{Kol58}
A.~Kolmogorov, \emph{Sur les propri\'{e}t\'{e}s des fonctions de concentrations
  de {M}. {P}. {L}\'{e}vy}, Ann. Inst. H. Poincar\'{e} \textbf{16} (1958),
  27--34.

\bibitem{LLTTY21}
Alexander~E. Litvak, Anna Lytova, Konstantin Tikhomirov, Nicole
  Tomczak-Jaegermann, and Pierre Youssef, \emph{Circular law for sparse random
  regular digraphs}, J. Eur. Math. Soc. (JEMS) \textbf{23} (2021), 467--501.

\bibitem{LyonsPeresBook}
Russell Lyons and Yuval Peres, \emph{Probability on trees and networks},
  vol.~42, Cambridge University Press, 2017.

\bibitem{MW91}
Brendan~D. McKay and Nicholas~C. Wormald, \emph{Asymptotic enumeration by
  degree sequence of graphs with degrees {$o(n^{1/2})$}}, Combinatorica
  \textbf{11} (1991), 369--382.

\bibitem{Meh67}
M.~L. Mehta, \emph{Random matrices and the statistical theory of energy
  levels}, Academic Press, New York-London, 1967.

\bibitem{Rog61}
Boris~A. Rogozin, \emph{On the increase of dispersion of sums of independent
  random variables}, Teor. Verojatnost. i Primenen \textbf{6} (1961), 106--108.

\bibitem{RT19}
Mark Rudelson and Konstantin Tikhomirov, \emph{The sparse circular law under
  minimal assumptions}, Geom. Funct. Anal. \textbf{29} (2019), 561--637.

\bibitem{RV08}
Mark Rudelson and Roman Vershynin, \emph{The {L}ittlewood-{O}fford problem and
  invertibility of random matrices}, Adv. Math. \textbf{218} (2008), 600--633.

\bibitem{SSS23}
Ashwin Sah, Julian Sahasrabudhe, and Mehtaab Sawhney, \emph{The sparse circular
  law, revisited}, Forthcoming.

\bibitem{Sch11}
J.~Schur, \emph{Bemerkungen zur {T}heorie der beschr\"{a}nkten {B}ilinearformen
  mit unendlich vielen {V}er\"{a}nderlichen}, J. Reine Angew. Math.
  \textbf{140} (1911), 1--28.

\bibitem{Sni02}
Piotr \'{S}niady, \emph{Random regularization of {B}rown spectral measure}, J.
  Funct. Anal. \textbf{193} (2002), 291--313.

\bibitem{TV08}
Terence Tao and Van Vu, \emph{Random matrices: the circular law}, Commun.
  Contemp. Math. \textbf{10} (2008), 261--307.

\bibitem{TV10}
Terence Tao and Van Vu, \emph{Random matrices: universality of {ESD}s and the
  circular law}, Ann. Probab. \textbf{38} (2010), 2023--2065, With an appendix
  by Manjunath Krishnapur.

\bibitem{TV09}
Terence Tao and Van~H. Vu, \emph{Inverse {L}ittlewood-{O}fford theorems and the
  condition number of random discrete matrices}, Ann. of Math. (2) \textbf{169}
  (2009), 595--632.

\bibitem{Tik22}
Konstantin Tikhomirov, \emph{Quantitative invertibility of non-hermitian random
  matrices}, arXiv:2206.00601.

\bibitem{Ver18}
R.~Vershynin, \emph{High-dimensional probability}, Cambridge Series in
  Statistical and Probabilistic Mathematics, vol.~47, Cambridge University
  Press, Cambridge, 2018, An introduction with applications in data science,
  With a foreword by Sara van de Geer.

\bibitem{Wig58}
Eugene~P. Wigner, \emph{On the distribution of the roots of certain symmetric
  matrices}, Ann. of Math. (2) \textbf{67} (1958), 325--327.

\end{thebibliography}

\end{document}